\documentclass[11pt]{amsart}
\usepackage[utf8]{inputenc}
\usepackage{bm}
\usepackage{fontenc}
\usepackage{amsfonts}
\usepackage{amssymb}
\usepackage{amsmath}
\usepackage{amsthm}\usepackage{mathtools}
\usepackage{enumerate}
\usepackage{enumitem} 
\usepackage[pagebackref,colorlinks,linkcolor=blue,citecolor=blue,urlcolor=blue,hypertexnames=true]{hyperref}
\usepackage[pagebackref,hypertexnames=true]{hyperref}
\usepackage{enumitem}
\usepackage{mathrsfs}
\usepackage{tikz}
\usetikzlibrary{calc}
\usepackage{marginnote}
\usepackage{xcolor,enumitem}
\usepackage{soul}\usepackage{tikz}
\usepackage[all,cmtip]{xy}
	


\newcommand{\N}{\mathbb{N}}
\newcommand{\C}{\mathbb{C}}

\newcommand{\cV}{\mathcal{V}}

\newcommand{\cE}{\mathcal{E}}

\newcommand{\eps}{\varepsilon}

\newcommand{\Hawaii}{Hawai\kern.05em`\kern.05em\relax i}

\newcommand{\SOTh}{\mathrm{SOT}\text{-}}

\newcommand{\cstu}{\mathrm{C}^*_u}

\newcommand{\roeq}{\mathrm{Q}^*_u}

\newtheorem*{rigprob*}{Rigidity Problem for uniform Roe Algebras}
\newtheorem*{rigprobcorona*}{Rigidity Problem for uniform Roe Coronas}

\newcommand{\cstar}{$\mathrm{C}^*$}

\newcommand{\cU}{\mathcal{U}}
\newcommand{\cZ}{\mathcal{Z}}
\newcommand{\cF}{\mathcal{F}}
\newcommand{\cP}{\mathcal{P}}

\newcommand{\cB}{\mathcal{B}}
\newcommand{\cK}{\mathcal{K}}



\newcommand{\R}{\mathbb{R}}

\numberwithin{equation}{section}

\newtheorem{theorem}{Theorem}[section]
\newtheorem*{theorem*}{Theorem}
\newtheorem{proposition}[theorem]{Proposition}

\newtheorem*{proposition*}{Proposition}
\newtheorem{lemma}[theorem]{Lemma}
\newtheorem*{lemma*}{Lemma}
\newtheorem{corollary}[theorem]{Corollary}
\newtheorem*{corollary*}{Corollar}

\newtheorem*{fact*}{Fact}
\theoremstyle{definition}
\newtheorem{definition}[theorem]{Definition}
\newtheorem*{definition*}{Definition}

\newtheorem*{acknowledgments}{Acknowledgments}
\newtheorem{claim}[theorem]{Claim}
\newtheorem*{claim*}{Claim}

\newtheorem*{conjecture*}{Conjecture}

\newtheorem{assumption}[theorem]{Assumption}

\theoremstyle{remark}

\newtheorem*{example*}{Example}
\newtheorem{remark}[theorem]{Remark}
\newtheorem*{remark*}{Remark}

\newtheorem*{note*}{Note}
\newtheorem*{question*}{Question}


\DeclareMathOperator{\supp}{supp}

\DeclareMathOperator{\propg}{prop}

\usepackage{enumitem}

\numberwithin{equation}{section}

\begin{document}
 
\title[ KMS states on uniform Roe algebras]{  KMS states on uniform Roe algebras}%

\date{\today} 

\author[B. M. Braga]{Bruno M. Braga}
\address[B. M. Braga]{IMPA, Estrada Dona Castorina 110, 22460-320, Rio de Janeiro, Brazil}
\email{demendoncabraga@gmail.com}
\urladdr{https://sites.google.com/site/demendoncabraga}
\thanks{B. M. Braga  was partially supported by FAPERJ (Proc. E-26/200.167/2023) and by CNPq (Proc. 303571/2022-5)}
 
\author[R. Exel]{Ruy Exel}
\address[R. Exel]{Universidade Federal de Santa Catarina, 88040-970 Florianopolis SC, Brazil}
 \email{ruyexel@gmail.com}
 \urladdr{http://www.mtm.ufsc.br/~exel/} 
 \thanks{R. Exel  was partially supported by  CNPq, Brazil}
 
\maketitle

\begin{abstract}
    We initiate the treatment of KMS states on uniform Roe algebras $\cstu(X)$ for a class of naturally occurring flows on these algebras. We show that KMS states on $\cstu(X)$ always factor through the diagonal operators $\ell_\infty(X)$. 
    We show the study of those states splits into understanding their strongly continuous KMS states and the KMS states which vanish on the ideal of compact operators. We show strongly continuous states are always unique when they exist and we give explicit formulas for them. We link the study of KMS states which vanish on the compacts to the Higson corona of $X$ and provide  lower bounds for the cardinality of the set of extreme KMS states. Lastly, we apply our theory to the $n$-branching tree: in this example, $\beta=\log(n)$ is a phase transition admitting $2^{2^{\aleph_0}}$ KMS states, no KMS states for smaller inverse temperatures, and a unique one for larger ones (the Gibbs state). Moreover, we show that the behavior of the KMS states around $\beta=\log(n)$ is chaotic.
\end{abstract}

\section{Introduction}\label{SectionIntro}

In noncommutative geometry, given a metric space $X$, one  defines  certain \cstar-algebras of operators on a Hilbert space  with the goal of   coding  certain aspects of the geometry of $X$ in \cstar-algebraic terms. When interested in the large scale geometric properties of $X$, i.e., in its coarse geometry, a well-known \cstar-algebra is    to be considered: the \emph{uniform Roe algebra of $X$}. This \cstar-algebra was introduced by Roe to study   the index theory of elliptic operators on noncompact manifolds (\cite{Roe1988,Roe1993}). The interest in   these algebras was then boosted due to  their connection with the coarse Baum-Connes   conjecture (\cite{Yu2000}). More recently, these \cstar-algebras entered the realm of  mathematical physics and researchers  interested in   topological insulators have been using them as observable algebras in order to describe    topological phases. We refer the reader to \cite{Kubota2017,EwertMeyer2019,Jones2021CommMathPhys,LudewigThiang2021CommMathPhys,Bourne2022JPhys} for the  rapidly growing  literature about   uniform Roe algebras    in mathematical physics.

The goal of this paper is to look at uniform Roe algebras under yet another point of view motivated by mathematical physics: we study KMS states on  uniform Roe algebras. Named after mathematical physicists Kubo, Martin, and Schwinger, KMS states are states   defined on any C*-algebra $A$ admitting a flow, that is, a strongly continuous one-parameter group $\{\sigma _t\}_{t\in {\mathbb R}}$ of automorphisms, thought of as the time development of observables of an idealized infinite system of particles.  Among the many equivalent definitions of such states, we adopt the one that requires our state $\varphi $ to satisfy the relation
  $$
  \varphi (ba)=\varphi \big (a\sigma _{i\beta }(b)\big ),
  $$
  for every $a$ in $A$ and every analytic element $b$ in $A$.  This condition has been noted by Kubo, Martin, and Schwinger in the late 1950's, as being satisfied by the \emph{grand canonical ensembles} in the Gibbs equilibrium formalism for finite systems.  Observing that this condition in fact characterizes the Gibbs states,
 Haag, Hugenholtz, and Winnink later proposed this as a criterion for equilibrium.

The parameter $\beta $ appearing above is the same parameter weighing the \emph {average energy} and the \emph{entropy} in the expression for the \emph{free energy} in the variational deduction of Gibbs states, and it is often thought of as the reciprocal of the \emph{temperature}.  While our abstract treatment of KMS states will not really involve the physical meaning of $\beta $, it is crucial to realize that the existence and uniqueness of KMS states depend in a very fundamental way on $\beta $, so much so that we shall refer to states satisfying the above condition as $(\sigma ,\beta )$-KMS states, following the the modern literature standards.

Crucially, among the most interesting features of KMS states is the abrupt change in behavior as $\beta $ crosses certain thresholds.  In classical infinite particle systems, a sudden change with temperature is often referred to as a \emph{phase transition}, which is what one observes when a gas liquefies when cooled down or when a magnet spontaneously looses its magnetization when heated beyond a critical temperature.  Thus, if for example there is a unique $(\sigma ,\beta )$-KMS state for every $\beta $ greater than some fixed $\beta _0$, while there are many $(\sigma ,\beta _0)$-KMS states, one says that a phase transition has happened at the critical value $\beta _0$.

Before giving a detailed description of this paper and  our main findings, we start with some basic definitions.  

\subsection{Coarse geometry and uniform Roe algebras} A map $h\colon (X,d)\to (Y,\partial)$ between metric spaces is called \emph{coarse} if for all $r>0$, there is $s>0$ such that 
\[d(x,y)<r\ \text{ implies }\ \partial(h(x),h(y))<s.\]
With coarse maps being the    morphisms of interest,  local properties of the metric spaces are irrelevant in coarse geometry and one usually restricts themselves to discrete spaces. In fact, for our goals, we will assume the metric spaces to be \emph{uniformly locally finite} (abbreviated as \emph{u.l.f.}), that is, they have the property that for each $r>0$ their balls of radius $r$ are uniformly bounded in size by a finite quantity.\footnote{A metric space with this property is also often called a \emph{metric space with bounded geometry} in the literature. Other authors call a space with bounded geometry one that is coarsely equivalent to a u.l.f.\ space. } 

Given a set $X$, $\ell_2(X)$ denotes the Hilbert space of square-summable maps $X\to \C$ and $(\delta_x)_{x\in X}$ denotes its canonical orthonormal basis. The space of bounded operators on $\ell_2(X)$ is denoted by $\cB(\ell_2(X))$ and  $\cK(\ell_2(X))$ denotes its ideal of compact operators.

\begin{definition}
    Let $(X,d)$ be a u.l.f.\ metric space. The propagation of an operator $a\in \cB(\ell_2(X))$ is defined by 
    \[\propg(a)=\sup\Big\{d(x,y)\mid a_{x,y}\coloneqq \langle a\delta_y,\delta_x\rangle\neq 0\Big\}.\]
     The $^*$-algebra of all operators with finite propagation,  denoted by $\cstu[X]$, is the \emph{algebraic uniform Roe algebra of $(X,d)$}. The norm closure of $\cstu[X]$, denoted by $\cstu(X)$, is the  \emph{uniform Roe algebra of $(X,d)$}.
\end{definition}

Uniform Roe algebras code coarse geometric properties of $X$ in terms of \cstar-algebraic properties. For instance, it is known that $X$ has Yu's property A if and only if $\cstu(X)$ is nuclear (\cite[Theorem 5.5.7]{BrownOzawa}). Also, it has been recently shown that this construction is \emph{rigid} in the sense that if the \cstar-algebras $\cstu(X)$ and $\cstu(Y)$ are isomorphic, then $X$ and $Y$ must be coarsely equivalent (\cite[Theorem 1.2]{BaudierBragaFarahKhukhroVignatiWillett2021uRaRig}).

 \subsection{Flows and KMS states on uniform Roe algebras}
Given a \cstar-algebra $A$, an action $\sigma\colon \R\curvearrowright A$ is a \emph{flow} if it is strongly continuous\footnote{ The action $\sigma$ is \emph{strongly continuous} if  $t\in \R\mapsto \sigma_t(a)\in A$ is continuous for all $a\in A$. } and  $\sigma_t\colon A\to A$ is   an isomorphism for all $t\in \R$.

  Quantum mechanical systems in thermal equilibrium can be described by their so called \emph{KMS states}.  The number $\beta$ in the definition below should be interpreted as the inverse of the temperature of the system. 

 \begin{definition}\label{DefinitionFlow}
    Let $A$ be a \cstar-algebra and $\sigma$ be a flow on $A$. For $\beta\in \R$, we say that a state $\varphi$ on $A$ is a \emph{$(\sigma,\beta)$-KMS state} if
    \[\varphi(a\sigma_{i\beta}(b))=\varphi(ba)\]
    for all   $a\in A$ and all analytic $b\in A$.\footnote{An element  $b\in A$  is \emph{analytic for $\sigma$} if the map $t\in \R\mapsto \sigma_t(b)\in A$ extends to an entire analytic map $\C\to A$.}
\end{definition}

 In order to study KMS states on uniform Roe algebras, one must first identify natural flows in them. We now introduce such flows.    Given a set $X$ and a  map $h\colon {X \to \R}$, we denote by $\bar h$ the $X$-by-$X$ diagonal matrix of reals such that its $(x,x)$-entry is $h(x)$ for all $x\in X$ and all other entries are zero. Notice that $\bar h$ canonically induces a bounded operator on $\ell_2(X)$ if and only if $h$ is bounded.

\begin{definition}
Let $X$ be a u.l.f.\ metric space and $h\colon X\to \R$ be a coarse map. We denote by $\sigma_{h}$ the flow on $\cstu(X)$ given by 
\[\sigma_{h,t}(a)=e^{it\bar h}ae^{-it\bar h}\]
for all $t\in \R $ and all $a\in \cstu(X)$.  
\end{definition}

Notice that the hypothesis on $h\colon X\to \R$ being coarse is important so that $\sigma_h$ is indeed a flow. Indeed, the action $\sigma_h$ is strongly continuous if and only if $h$ is coarse  (see Proposition \ref{PropItIsAFlow}). All flows on uniform Roe algebras considered in this paper will be of the form above for some appropriate $h\colon X\to \R$. In order to have any hope of understanding the KMS states for those flows, we must first understand the analytic elements of $\cstu(X)$ or, more precisely, a $^*$-subalgebra of analytic operators of $\cstu(X)$ which is dense in it. We have:
 
\begin{proposition}
Let $X$ be a u.l.f.\ metric space and $h\colon X\to \R$ be a  map.

\begin{enumerate}
    \item\label{PropAnalyticElements.Item1} If $h$ is bounded, then every element of $\cstu(X)$ is analytic for $\sigma_h$
    \item\label{PropAnalyticElements.Item2} If $h$ is coarse, then every element of $\cstu[X]$ is analytic for $\sigma_h$. 
\end{enumerate} \label{PropAnalyticElementsINTRO}
\end{proposition}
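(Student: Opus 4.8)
The plan is to produce the entire extension explicitly, as a norm-convergent power series in the generator of $\sigma_h$ computed at the level of matrix coefficients. Write $D(a)=[\bar h,a]=\bar h a-a\bar h$ for the derivation formally generating the flow. A direct entrywise computation gives $(D^n(a))_{x,y}=(h(x)-h(y))^n a_{x,y}$ for all $n$, so that for real $t$ one has $\sigma_{h,t}(a)=\sum_{n\ge 0}\frac{(it)^n}{n!}D^n(a)$, this series being nothing but the entrywise expansion of $e^{it(h(x)-h(y))}a_{x,y}$. Accordingly, I would set $a_z:=\sum_{n\ge 0}\frac{(iz)^n}{n!}D^n(a)$ for $z\in\C$ and, in each case, show that this series converges in norm, defines an entire $\cstu(X)$-valued map, and agrees with $t\mapsto\sigma_{h,t}(a)$ on the real axis; by uniqueness of analytic continuation this exhibits $a$ as analytic for $\sigma_h$.

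For \eqref{PropAnalyticElements.Item1}, boundedness of $h$ means $\bar h$ is a bounded diagonal operator, hence $\bar h\in\ell_\infty(X)\subseteq\cstu(X)$, and $D$ is a bounded derivation on $\cstu(X)$ with $\norm{D}\le 2\norm{\bar h}=2\sup_x\abs{h(x)}$. Thus $\norm{D^n(a)}\le\norm{D}^n\norm{a}$ and the series defining $a_z$ converges absolutely, uniformly on compact subsets of $\C$, for every $a\in\cstu(X)$; this settles the bounded case. (Equivalently, one may note that $z\mapsto e^{iz\bar h}ae^{-iz\bar h}$ is manifestly entire because $z\mapsto e^{iz\bar h}$ is a norm-entire function valued in $\cstu(X)$.)

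For \eqref{PropAnalyticElements.Item2}, fix $a\in\cstu[X]$ and put $r=\propg(a)<\infty$. Since $a_{x,y}\ne 0$ forces $d(x,y)\le r$, coarseness of $h$, applied with radius $r+1$, produces $s>0$ with $\abs{h(x)-h(y)}<s$ for every pair $(x,y)$ with $a_{x,y}\ne 0$. Hence each $D^n(a)$ is supported inside the support of $a$, so $\propg(D^n(a))\le r$, and its matrix coefficients are bounded by $s^n\sup_{x,y}\abs{a_{x,y}}$. The crux is to upgrade this entrywise bound to a bound on the operator norm. For this I would invoke the standard decomposition of finite-propagation operators on a u.l.f.\ space: any operator of propagation $\le r$ is a sum of at most $K=K(r)$ operators, each supported on a partial matching (at most one nonzero entry per row and per column), and the norm of such a matching operator equals the supremum of its entries. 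Applying this to $D^n(a)$ gives $\norm{D^n(a)}\le K s^n\sup_{x,y}\abs{a_{x,y}}\le K s^n\norm{a}$, so that $\sum_{n\ge 0}\frac{\abs{z}^n}{n!}\norm{D^n(a)}\le K\norm{a}\, e^{\abs{z}\, s}<\infty$ for every $z\in\C$. The series for $a_z$ therefore converges in norm, uniformly on compacta; since every partial sum has propagation $\le r$ and the set of such operators is norm-closed, $a_z\in\cstu[X]$, the map $z\mapsto a_z$ is entire, and it restricts to the flow on $\R$.

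The only nonformal step, and hence the main obstacle, is the passage from the uniform entrywise bound on $D^n(a)$ to the operator-norm bound $\norm{D^n(a)}\le K s^n\norm{a}$; this is precisely where both uniform local finiteness (to control the number $K$ of matchings) and coarseness (to supply the uniform frequency bound $s$) enter. Once this estimate is secured, norm-convergence, entireness as a locally uniform limit of polynomials, and agreement with $\sigma_{h,t}$ along $\R$ are all routine.
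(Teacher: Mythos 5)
Your argument is correct, and part (1) coincides with the paper's (both reduce to the entireness of $z\mapsto e^{iz\bar h}$ for bounded $\bar h$). For part (2), however, you take a genuinely different route. The paper first linearizes: it writes $a\in\cstu[X]$ as a finite sum of generators $bv_f$ with $b\in\ell_\infty(X)$ and $f$ a partial translation, computes $\sigma_{h,t}(bv_f)=e^{it\bar g}bv_f$ where $g(x)=h(f(x))-h(x)$ on $\dom(f)$ and $0$ elsewhere, and then observes that $g$ is bounded (coarseness of $h$ plus $\sup_x d(x,f(x))<\infty$), so the extension $z\mapsto e^{iz\bar g}bv_f$ is entire with no further estimates. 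You instead keep $a$ whole, identify the Taylor coefficients of the would-be extension as the iterated commutators $D^n(a)$ with $(D^n(a))_{x,y}=(h(x)-h(y))^na_{x,y}$, and control $\norm{D^n(a)}$ by applying the partial-matching decomposition of propagation-$\le r$ operators at every order $n$. The two proofs consume exactly the same inputs --- coarseness giving the uniform bound $\abs{h(x)-h(y)}<s$ on the support, and uniform local finiteness giving the decomposition into at most $K(r)$ partial translations --- but the paper uses the decomposition once, up front, whereas you use it $n$ times inside a series. The paper's version is shorter; yours has the advantage of producing an explicit growth bound $\norm{\sigma_{h,z}(a)}\le K(r)\norm{a}e^{\abs{z}s}$ on the analytic extension and of exhibiting the generator of the flow as the (densely defined) derivation $D=[\bar h,\cdot\,]$, which is not made explicit in the paper. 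One small point worth stating when you write this up: the identification of your norm limit $a_z$ with $\sigma_{h,t}(a)$ at real $z=t$ should be justified by noting that norm convergence implies entrywise convergence, so the norm limit of the partial sums must be the operator with entries $e^{it(h(x)-h(y))}a_{x,y}$.
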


The reader may wonder   how strong   is the restriction of only working with flows of the form above. As we show in Proposition \ref{PropFlowsPreservinglinfty}, if   $\sigma\colon \R\curvearrowright \cstu(X) $ is an arbitrary flow which leaves the Cartan masa $\ell_\infty(X)$ invariant, i.e., $\sigma_t(\ell_\infty(X))\subseteq \ell_\infty(X)$  for all $t\in \R$, then there is a coarse map $h\colon X\to \R$ such that $\sigma=\sigma_h$.\footnote{ We thank Stuart White for raising the possibility that this could be true.} This corroborates to 
our claim that such flows form a very natural  and general class of flows on those algebras.

\subsection{Main results}\label{SubsectionIntroMain}
It is often common in the study of KMS states on a given \cstar-algebra $A$ that there is some ``natural'' \cstar-subalgebra $B\subseteq A$ and a conditional expectation $E\colon A\to B$ such that the KMS states $\varphi\colon A\to \C$ factor through $E$. We show that this is also the case in our setting with the ``natural'' \cstar-subalgebra through which the KMS states factor being the \cstar-algebra of all bounded maps $X\to \C$, denoted by $\ell_\infty(X)$. Precisely, throughout these notes, we identify  $\ell_\infty(X)$  with the \cstar-algebra of diagonal operators on $\ell_2(X)$  in the usual way:  given $a=(a_x)_{x\in X}\in \ell_\infty(X)$ and $\xi=(\xi_x)_{x\in X}\in \ell_2(X)$, we let \[a\xi=(a_x\xi_x)_{x\in X}\in \ell_2(X).\]
Given $A\subseteq X$, $\chi_A\in \ell_\infty(X)$ denotes the canonical orthogonal projection  $\ell_2(X)\to\ell_2(A)$.

We show the following: 

\begin{theorem} 
Let $X$ be a u.l.f.\ metric space,  $h\colon X\to \R$ be a  coarse map, and $\beta\in \R$. If $\varphi$ is a $(\sigma_{h},\beta)$-KMS state on $\cstu(X)$, then $\varphi=\varphi\circ E$, where $E\colon \cstu(X)\to \ell_\infty(X)$ is the canonical conditional expectation (see Figure \ref{Fig1}).
\begin{figure*}[h] \centerline{
\xymatrix{ 
        \cstu(X)\ar[dr]_E  \ar[rr]^\varphi& &\C  \\
         & \ell_\infty(X) \ar[ur]_{\varphi\restriction \ell_\infty(X)}&  }}\caption{KMS states on $\cstu(X)$ factor through $\ell_\infty(X)$, see Subsection \ref{SubsectionFactorlInfty} for the precise definition of $E$.}\label{Fig1}
\end{figure*}
\label{ThmFactorsCondExpINTRO} 
\end{theorem}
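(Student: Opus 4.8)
The plan is to deduce the factorization directly from the KMS identity applied to matrix units, and then to upgrade the vanishing of $\varphi$ on individual off-diagonal entries to all of $\cstu(X)$ by a finite decomposition into partial translations. For $x,y\in X$ write $e_{x,y}\in\cB(\ell_2(X))$ for the rank-one operator $\xi\mapsto \langle\xi,\delta_y\rangle\delta_x$, so that $e_{x,y}e_{z,w}=\delta_{y,z}\,e_{x,w}$ and $\propg(e_{x,y})=d(x,y)$. In particular each $e_{x,y}$ lies in $\cstu[X]$ and hence, since $h$ is coarse, is analytic for $\sigma_h$ by Proposition~\ref{PropAnalyticElementsINTRO}. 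Since $e^{it\bar h}$ is the diagonal unitary with $(z,z)$-entry $e^{ith(z)}$, one has $\sigma_{h,t}(e_{x,y})=e^{it(h(x)-h(y))}e_{x,y}$, and analytic continuation gives $\sigma_{h,i\beta}(e_{x,y})=e^{-\beta(h(x)-h(y))}e_{x,y}$.

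The key mechanism is the following. Fix $x\neq y$ and apply the KMS identity with $a=e_{y,y}$ and $b=e_{x,y}$: on the one hand $ba=e_{x,y}e_{y,y}=e_{x,y}$, while on the other $a\sigma_{h,i\beta}(b)=e^{-\beta(h(x)-h(y))}e_{y,y}e_{x,y}=0$ because $x\neq y$; by the KMS identity $\varphi(e_{x,y})=\varphi(ba)=\varphi(a\sigma_{h,i\beta}(b))=0$, for every $x\neq y$ and every $\beta$. The same computation will give $\varphi(c)=0$ for any weighted partial isometry $c=\sum_{(x,y)\in C}a_{x,y}e_{x,y}$ whose index set $C$ is the graph of a partial injection with \emph{disjoint} domain $D=\{y:(x,y)\in C\}$ and range $\{x:(x,y)\in C\}$: taking $b=c$ and $a=\chi_D$, disjointness of domain and range forces $\chi_D\,\sigma_{h,i\beta}(c)=0$, while $c\chi_D=c$, so KMS yields $\varphi(c)=\varphi(c\chi_D)=\varphi(\chi_D\sigma_{h,i\beta}(c))=0$.

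It therefore suffices to write, for each finite-propagation $a$ of propagation $r$, the off-diagonal part $a-E(a)$ as a \emph{finite} sum of such partial isometries. Here uniform local finiteness is essential: the symmetric graph $G_r$ on $X$ with an edge $\{x,y\}$ whenever $0<d(x,y)\le r$ has degree bounded by $N-1$, where $N=\sup_x|\{z:d(x,z)\le r\}|<\infty$, so it admits a proper edge-coloring with finitely many colors, each color class a matching. Orienting the two arcs of each edge separately refines this into finitely many classes $C_1,\dots,C_m$, each the graph of a partial injection with disjoint domain and range, together exhausting all pairs with $0<d(x,y)\le r$. Setting $c_j=\sum_{(x,y)\in C_j}a_{x,y}e_{x,y}$, each $c_j$ is a bounded operator of propagation at most $r$ (hence analytic), and $a-E(a)=\sum_{j=1}^m c_j$. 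By the previous paragraph $\varphi(c_j)=0$ for each $j$, so $\varphi(a)=\varphi(E(a))$ for all $a\in\cstu[X]$. Since $\varphi$ and $E$ are contractive, $E$ has range in $\ell_\infty(X)$, and $\cstu[X]$ is dense in $\cstu(X)$, the continuous maps $\varphi$ and $\varphi\circ E$ agree on a dense subalgebra and hence everywhere.

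I expect the main obstacle to be the finite decomposition of $a-E(a)$ into partial isometries with disjoint domain and range, rather than the KMS computations themselves. One must verify that the bounded-geometry hypothesis really does yield a finite edge-coloring of $G_r$ (for infinite $X$ this requires a compactness argument), and that each $c_j$ is genuinely a bounded finite-propagation operator, so that it is a legitimate analytic element to feed into the KMS identity. This reduction of finite-propagation operators to finitely many partial translations is exactly the point where the coarse geometry of $X$ enters the argument in an essential way.
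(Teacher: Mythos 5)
Your proof is correct and follows essentially the same route as the paper's: the paper likewise reduces to weighted partial isometries $av_f$ with $f$ fixed-point free, uses bounded geometry to split the domain into finitely many $2r$-separated pieces $A_i$ with $A_i\cap f(A_i)=\emptyset$, and kills each piece via $\varphi(av_{f_i})=\varphi(av_{f_i}\chi_{A_i})=\varphi(\chi_{A_i}\sigma_{h,i\beta}(av_{f_i}))=0$, which is exactly your identity $\varphi(c)=\varphi(c\chi_D)=\varphi(\chi_D\sigma_{h,i\beta}(c))=0$. The only cosmetic difference is that you produce the finite decomposition by edge-coloring the distance-$\leq r$ graph, whereas the paper first invokes the span description $\cstu[X]=\mathrm{span}\{av_f\}$ and then vertex-partitions each domain; both rest on the same bounded-geometry combinatorics.
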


Theorem \ref{ThmFactorsCondExpINTRO}  is an extremely powerful tool in our study of KMS states on uniform Roe algebras and most of our results deeply depend on it. For instance, it allows us to understand the case of a flow given by a bounded map $h\colon X\to \R$ in terms of amenability: for $h$ bounded,  $\cstu(X)$ has a $(\sigma_h,\beta)$-KMS states if and only if $X$ is amenable (see Theorem \ref{ThmBoundedhAmenable}). Moreover, Theorem \ref{ThmFactorsCondExpINTRO}  allows us to reduce the study of KMS states on uniform Roe algebras to two cases (see Proposition \ref{PropKMSStateVanishComp}): 

\begin{enumerate}[label=(\Roman*)]
\item strongly continuous KMS states, and \item KMS states which vanish on the  the ideal of compact operators.
\end{enumerate}
 The strongly continuous case is the simplest one and the next result summarizes  what happens:
 
 \begin{theorem}
     Let $X$ be a u.l.f.\ metric space,  $h\colon X\to \R$ be a coarse map, and $\beta\geq0$. There are strongly continuous $(\sigma_{h},\beta)$-KMS states on $\cstu(X)$ if and only if 
    \[ Z(\beta) := \text{tr}(e^{-\beta \bar h}) = \sum_{x\in X}e^{-\beta h(x)}<\infty.\]
Moreover,      a function  $\varphi\colon \cstu(X)\to \C$  is a strongly continuous $(\sigma_{ h},\beta)$-KMS state on $\cstu(X)$ if and only if
\begin{equation} 
\varphi(a)=
\frac{\text{tr}(e^{-\beta \bar h}a)}{\text{tr}(e^{-\beta \bar h})} =
\frac{1}{Z(\beta)}\sum_{x\in X}a_{x,x}e^{-\beta h(x)}
\end{equation}
     for all $a=[a_{x,y}]\in \cstu(X)$. In particular, whenever they exist, strongly continuous $(\sigma_h,\beta)$-KMS states are unique.\label{ThmStrongContKMSINTRO}
\end{theorem}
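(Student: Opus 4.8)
The plan is to prove the existence criterion and the explicit formula simultaneously, by establishing the two implications separately. First, assuming $Z(\beta)<\infty$, I would check that the functional defined by the displayed formula is a strongly continuous $(\sigma_h,\beta)$-KMS state. Second, I would show that \emph{any} strongly continuous $(\sigma_h,\beta)$-KMS state is forced to equal this functional, which simultaneously yields uniqueness and the necessity of $Z(\beta)<\infty$. The engine of the second implication is Theorem \ref{ThmFactorsCondExpINTRO}: since $\vp=\vp\circ E$, the state is completely determined by its values on the diagonal $\ell_\infty(X)$.

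For the sufficiency direction, set $\rho=e^{-\beta\bar h}/Z(\beta)$. When $Z(\beta)<\infty$ this is a positive trace-class operator with $\tr(\rho)=1$, so $a\mapsto\tr(\rho a)$ is a normal state on $\cB(\ell_2(X))$ and restricts to a state on $\cstu(X)$; normality yields strong continuity on bounded sets. To verify the KMS condition I would invoke the standard reduction that it suffices to check it for $a,b$ ranging over the norm-dense, $\sigma_h$-invariant $*$-subalgebra $\cstu[X]$ of analytic elements (invariance holds because conjugation by the diagonal unitary $e^{it\bar h}$ preserves propagation, and analyticity is Proposition \ref{PropAnalyticElementsINTRO}(\ref{PropAnalyticElements.Item2})). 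For such $b$ one has $\sigma_{h,i\beta}(b)=e^{-\beta\bar h}be^{\beta\bar h}$ by analytic continuation, whence a short computation of matrix entries (equivalently, cyclicity of the trace) gives
\[
\vp\big(a\sigma_{h,i\beta}(b)\big)=\frac{1}{Z(\beta)}\sum_{x,y}a_{x,y}e^{-\beta h(y)}b_{y,x}=\frac{1}{Z(\beta)}\tr\big(e^{-\beta\bar h}ba\big)=\vp(ba),
\]
the rearrangement being justified by absolute (trace-class) convergence.

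For the necessity/uniqueness direction, let $\vp$ be a strongly continuous $(\sigma_h,\beta)$-KMS state and write $\lambda_x=\vp(\chi_{\{x\}})\ge 0$. Applying the KMS relation to the matrix units $e_{x,y}\in\cstu[X]$ (taking $a=e_{y,x}$, $b=e_{x,y}$, and using $\sigma_{h,i\beta}(e_{x,y})=e^{\beta(h(y)-h(x))}e_{x,y}$ together with $e_{x,y}e_{y,x}=\chi_{\{x\}}$) yields $e^{\beta h(x)}\lambda_x=e^{\beta h(y)}\lambda_y$ for all $x,y$, so $\lambda_x=Ce^{-\beta h(x)}$ for a single constant $C\ge 0$. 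Strong continuity is then used decisively: the finite partial sums $\sum_{x\in F}\chi_{\{x\}}$ increase to $1$ in the strong operator topology along an exhaustion of the (countable) space $X$ by finite sets, so $\sum_x\lambda_x=\vp(1)=1$; this forces $C\sum_x e^{-\beta h(x)}=1$, hence $Z(\beta)<\infty$ and $C=1/Z(\beta)$. Finally, combining $\vp=\vp\circ E$ with strong continuity on the diagonal gives $\vp(a)=\sum_x\lambda_x a_{x,x}=\frac{1}{Z(\beta)}\sum_x a_{x,x}e^{-\beta h(x)}$, as desired.

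The step I expect to be the main obstacle---and the one that genuinely requires the \emph{strong continuity} hypothesis rather than merely the KMS condition---is the passage from the finitely-additive data $(\lambda_x)_x$ to the normalization $\sum_x\lambda_x=1$ and the ensuing conclusion $Z(\beta)<\infty$. This is precisely the dichotomy with case (II): a general, non strongly continuous KMS state can restrict to a non-normal, merely finitely additive state on $\ell_\infty(X)$ (for instance an ultrafilter-type limit vanishing on the compacts), for which the diagonal partial sums need not converge to $\vp(1)$. Keeping careful track of the trace-class estimates in the KMS computation and of the SOT-convergence of the diagonal truncations is where the real work lies; the algebraic KMS identity itself is a clean cyclicity-of-trace calculation.
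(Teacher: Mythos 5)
Your proof is correct, and the necessity/uniqueness half is essentially identical to the paper's: both extract $\vp(e_{x,x})=Ce^{-\beta h(x)}$ from the KMS relation applied to rank-one partial isometries between singletons, then use strong continuity to sum the diagonal to $1$, forcing $Z(\beta)<\infty$, and finish with Theorem \ref{ThmFactorsCondExpINTRO}. The only real divergence is in the sufficiency direction: the paper verifies the KMS condition for the candidate state via its Theorem \ref{ThmSufficientKMS}, which reduces everything to the single identity $\vp(\chi_{f(A)})=\vp(\chi_A e^{\beta(\overline{h-h\circ f})})$ for partial translations $f$ (a two-line change of variables), whereas you verify the full KMS identity directly on the dense invariant subalgebra $\cstu[X]$ by a cyclicity-of-trace computation with the density matrix $\rho=e^{-\beta\bar h}/Z(\beta)$. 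Your route is self-contained and makes the Gibbs-state interpretation explicit (which the paper only remarks on after the theorem), at the cost of invoking the standard Bratteli--Robinson reduction to analytic cores and of justifying the trace-class rearrangement; the paper's route leans on machinery it has already built and reuses throughout. Both are sound, and your identification of the normalization step $\sum_x\lambda_x=\vp(1)$ as the one place where strong continuity (as opposed to the bare KMS condition) is indispensable matches exactly the dichotomy the paper draws with the states vanishing on the compacts.
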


In other words, the strongly continuous KMS states are exactly the Gibbs states 
provided $e^{-\beta\bar h}$ is trace class (see \cite[Section 6.2.2]{BratteliRobinsonBookII1997}).  This is of course no big surprise since the  strongly continuous states on any
operator algebra containing the compacts correspond precisely  with the strongly continuous states defined
on the whole $\cB(\ell_2(X))$.

With the strongly continuous case being well understood, we then proceed to study the much more interesting case of KMS states which vanish on the compact operators. This property allows us to factor those states through the \emph{uniform Roe corona of $X$}. 

\begin{definition} (\cite[Definition 1.2]{BragaFarahVignati2018AdvMath}).
Let $X$ be a u.l.f.\ metric space. The \emph{uniform Roe corona of $X$} is the \cstar-algebra given by  
\[\roeq(X)=\cstu(X)/\cK(\ell_2(X)).\]
We denote by $\pi=\pi_X\colon \cstu(X)\to \roeq(X)$ the canonical quotient map.\label{DefintionRoeq} 
\end{definition}

A state  $\varphi$  on $\cstu(X)$ which vanishes on $\cK(\ell_2(X))$ gives rise to a well-defined  state $\psi$ on $\roeq(X)$ determined by 
\[\psi(\pi(a))=\varphi(a), \text{ for all }\ a\in \cstu(X).\]
Moreover, given a coarse map   $h\colon X\to \R$, the flow $\sigma_h$ canonically induces a flow on the corona $\roeq(X)$.  Precisely, as  $\sigma_{h}$ leaves $\cK(\ell_2(X))$ invariant, i.e., 
\[\sigma_{h,t}(\cK(\ell_2(X)))\subseteq \cK(\ell_2(X))\ \text{ for all }\ t\in \R,\]
we obtain a flow $\sigma_{h}^\infty$ on $\roeq(X)$ by letting
\[\sigma_{h,t}^\infty(\pi(a))=\pi(\sigma_{h,t}(a))\ \text{ for all }\ a\in \cstu(X)\ \text{ and all }\ t\in \R.\] 
In other words, $\sigma_{h}^\infty$ is a flow on $\roeq(X)$ which makes the following diagram commute. 
\begin{equation*}
    \xymatrix{ 
        \cstu(X) \ar[d]_\pi  \ar[r]^{\sigma_{h,t}} &\cstu(X)  \ar[d]^\pi  \\
\roeq(X)\ar[r]_{\sigma_{h,t}^\infty} &\roeq(X)  }
\end{equation*}

We show that the study of $(\sigma_h,\beta)$-KMS states on $\cstu(X)$ which vanish on the ideal of compact operators completely reduces to the study of $(\sigma_h^\infty,\beta)$-KMS states on $\roeq(X)$ in a canonical way. Precisely:

\begin{proposition}  
Let $X$ be a u.l.f.\ metric space, $h\colon X\to \R$ be a coarse map, and $\beta\in \R$. A state $\psi$ on $\roeq(X)$ is a $(\sigma_{h}^\infty,\beta)$-KMS state   if and only if $\varphi=\psi\circ \pi$ is a $(\sigma_{h},\beta)$-KMS state on $\cstu(X)$. Moreover, the assignment 
\[\psi\mapsto \varphi=\psi\circ \pi\]
is an affine isomorphism from the set of all $(\sigma_h^\infty,\beta)$-KMS states on $\roeq(X)$ to the set of all  $(\sigma_h,\beta)$-KMS states on $\cstu(X)$ which vanish on $\cK(\ell_2(X))$. \label{PropRoeqQStatesAffineIsoINTRO}
\end{proposition}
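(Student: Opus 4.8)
The plan is to establish the affine-bijection claims by soft arguments and then reduce the KMS equivalence to a computation on a well-chosen dense subalgebra, the only real work being to identify that subalgebra correctly. Throughout I would use that $\pi$ is a unital surjective $*$-homomorphism with kernel $\cK(\ell_2(X))$. This immediately gives that $\psi\mapsto\psi\circ\pi$ is affine and injective, that each $\psi\circ\pi$ is a state vanishing on $\cK(\ell_2(X))$, and conversely that any state $\varphi$ on $\cstu(X)$ vanishing on $\cK(\ell_2(X))$ factors uniquely as $\varphi=\psi\circ\pi$ for a state $\psi$ on $\roeq(X)$. Hence the proposition reduces entirely to showing that, under this correspondence, $\psi$ is $(\sigma_h^\infty,\beta)$-KMS precisely when $\varphi=\psi\circ\pi$ is $(\sigma_h,\beta)$-KMS.

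I would first record how $\pi$ interacts with analyticity. For real $t$ the defining relation $\sigma_{h,t}^\infty\circ\pi=\pi\circ\sigma_{h,t}$ holds, and since $\pi$ is bounded and linear, for any $\sigma_h$-analytic $b$ the entire map $z\mapsto\pi(\sigma_{h,z}(b))$ extends $t\mapsto\sigma_{h,t}^\infty(\pi(b))$; thus $\pi(b)$ is $\sigma_h^\infty$-analytic and $\sigma_{h,i\beta}^\infty(\pi(b))=\pi(\sigma_{h,i\beta}(b))$. The forward implication is then immediate: if $\psi$ is $(\sigma_h^\infty,\beta)$-KMS, then for $a\in\cstu(X)$ and $\sigma_h$-analytic $b$,
\[
\varphi(a\sigma_{h,i\beta}(b))=\psi\big(\pi(a)\,\sigma_{h,i\beta}^\infty(\pi(b))\big)=\psi(\pi(b)\pi(a))=\varphi(ba),
\]
so $\varphi=\psi\circ\pi$ is $(\sigma_h,\beta)$-KMS.

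For the converse I would deliberately avoid lifting arbitrary analytic elements of $\roeq(X)$, since these need not arise from analytic elements of $\cstu(X)$. Instead I would verify the KMS identity for $\psi$ only on $\pi(\cstu[X])$ and invoke the standard reduction of the KMS condition to a norm-dense, $\sigma$-invariant $*$-subalgebra of analytic elements. The point is that $\cstu[X]$ is preserved by every $\sigma_{h,z}$, $z\in\C$: conjugation by the diagonal $e^{iz\bar h}$ leaves the matrix support unchanged and merely scales the $(x,y)$-entry by $e^{iz(h(x)-h(y))}$, which is bounded on the support because $h$ is coarse and $b$ has finite propagation. Combined with Proposition \ref{PropAnalyticElementsINTRO}(\ref{PropAnalyticElements.Item2}), this shows $\pi(\cstu[X])$ is a norm-dense, $\sigma_h^\infty$-invariant $*$-subalgebra of $\roeq(X)$ consisting of $\sigma_h^\infty$-analytic elements. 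For $a'=\pi(a)$ and $b'=\pi(b)$ with $a,b\in\cstu[X]$, the KMS identity for $\varphi$ yields
\[
\psi\big(a'\sigma_{h,i\beta}^\infty(b')\big)=\varphi(a\sigma_{h,i\beta}(b))=\varphi(ba)=\psi(b'a'),
\]
and the cited reduction upgrades this to the full KMS condition for $\psi$.

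The hard part is exactly this converse: because analyticity does not pass through the quotient in reverse, the equivalence cannot be obtained by naive lifting and genuinely rests on the core-of-analytic-elements form of the KMS condition (this is standard; see, e.g., \cite{BratteliRobinsonBookII1997}). The substantive verification is therefore checking that $\pi(\cstu[X])$ really is such a core — density, invariance under $\sigma_{h,z}^\infty$ for all complex $z$, and analyticity — which is where the coarseness of $h$ and Proposition \ref{PropAnalyticElementsINTRO} are used.
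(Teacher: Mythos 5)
Your proof is correct and follows essentially the same route as the paper's: both establish the affine bijection by soft quotient arguments and reduce the KMS condition on $\roeq(X)$ to a norm-dense, flow-invariant $*$-subalgebra of analytic elements in the image of $\pi$ (you use $\pi(\cstu[X])$, the paper uses the image of all $\sigma_h$-analytic elements of $\cstu(X)$), invoking the standard dense-core formulation of the KMS condition. Your explicit observation that analytic elements of the quotient need not lift, which is why the dense-core reduction is genuinely needed, is a point the paper leaves implicit.
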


Guided by Proposition \ref{PropRoeqQStatesAffineIsoINTRO}, we then focus on KMS states on the corona algebra $\roeq(X)$. For that, we show some general results about KMS states on arbitrary \cstar-algebras with respect to arbitrary flows (see Section \ref{SectionIntermission} for details). In a nutshell, we show that the 
  extreme KMS states on an arbitrary \cstar-algebra $A$ are influenced by 
  %
  the center of $A$, denoted by $\cZ(A)$, and its \cstar-subalgebras.  Returning to our coarse setting, this brings up a seemingly unexpected link between KMS states on uniform Roe algebras and the \emph{Higson corona} of metric spaces. More precisely, given a u.l.f.\ metric space $X$, we denote its \emph{Higson compactification} by $hX$ and its \emph{Higson corona} by $\nu X=h X\setminus X$.\footnote{For brevity, we refer the reader to Definition \ref{DefinitionHigsonCor} for the precise definition of the Higson compactification/corona.} The space of continuous functions on the Higson compactification, $C(hX)$, is canonically seen as a \cstar-subalgebra of $\ell_\infty(X)$, which in turn allow us to canonically identify the continuous functions on its corona, $C(\nu X)$, with a \cstar-subalgebra of $\roeq(X)$. Under this identifications, it has been recently shown that   \[\cZ(\roeq(X))=C(\nu X)\]
(see \cite[Proposition 3.6]{BaudierBragaFarahVignatiWillett2023}).

This link between KMS states and  the Higson corona is essential in the analysis of KMS states which vanish on the compacts.  Precisely,  the next result summarizes our findings on the topic.  

\begin{theorem}
 Let $X$ be a u.l.f.\ metric space, $h\colon X\to \R$ be a coarse map, and $\beta\in \R$.

 \begin{enumerate}
 \item For any  extreme $ (\sigma_{h}^\infty,\beta)$-KMS state   $\psi$ on $\roeq(X)$, there is $x\in \nu X$ such that \[\psi(a)=a(x) \ \text{ for all }\ a\in C(\nu X).\]
     \item Suppose there is a $(\sigma_{h}^\infty,\beta)$-KMS state on $\roeq(X)$  whose restriction to $C(\nu X)$ is faithful. Then, for any $x\in \nu X$, there is an extreme $(\sigma_{h}^\infty,\beta)$-KMS state $\psi$ on $\roeq(X)$  such that \[\psi(a)=a(x) \ \text{ for all }\ a\in C(\nu X).\]
 \end{enumerate}
 \label{ThmExtRoeqINTRO}
\end{theorem}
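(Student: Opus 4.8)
The plan is to leverage the identification $\cZ(\roeq(X)) = C(\nu X)$ together with two observations about how the flow meets the center. First, since $\bar h$ is diagonal, every diagonal operator $a\in\ell_\infty(X)$ satisfies $e^{it\bar h}ae^{-it\bar h}=a$, so $\sigma_{h,t}$ fixes $\ell_\infty(X)$ pointwise and hence $\sigma_{h,t}^\infty$ fixes $C(\nu X)$ pointwise; in particular every $z\in C(\nu X)$ is analytic with $\sigma_{i\beta}^\infty(z)=z$. Second, I would record a conditioning lemma (presumably among the general results of Section \ref{SectionIntermission}): if $\psi$ is a $(\sigma_h^\infty,\beta)$-KMS state and $z\in C(\nu X)$ satisfies $z\ge 0$ and $\psi(z)>0$, then $\psi_z(a):=\psi(za)/\psi(z)$ is again a $(\sigma_h^\infty,\beta)$-KMS state. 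The verification uses only centrality of $z$: applying the KMS identity $\psi(ba)=\psi(a\sigma_{i\beta}^\infty(b))$ to the pair $(b,za)$ gives $\psi(zba)=\psi(za\,\sigma_{i\beta}^\infty(b))$, which is exactly the KMS identity for $\psi_z$, while positivity follows by writing $z=(z^{1/2})^2$ with $z^{1/2}$ central.

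For part (1), let $\psi$ be an extreme $(\sigma_h^\infty,\beta)$-KMS state and let $g\in C(\nu X)$ with $0\le g\le 1$. If $0<\psi(g)<1$, then applying the conditioning lemma to $g$ and to $1-g$ and using $\psi(a)=\psi(ga)+\psi((1-g)a)$ exhibits $\psi=\psi(g)\,\psi_g+\psi(1-g)\,\psi_{1-g}$ as a convex combination of two KMS states; extremality forces $\psi_g=\psi$, i.e. $\psi(ga)=\psi(g)\psi(a)$ for all $a$. The boundary cases $\psi(g)\in\{0,1\}$ give the same identity directly via Cauchy--Schwarz. Since positive elements of norm $\le 1$ span $C(\nu X)$, I obtain $\psi(ga)=\psi(g)\psi(a)$ for all $g\in C(\nu X)$ and $a\in\roeq(X)$; taking $a\in C(\nu X)$ shows $\psi\restriction C(\nu X)$ is multiplicative, hence a character, and by Gelfand duality equals evaluation at some $x\in\nu X$.

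For part (2), fix $x\in\nu X$ and let $\psi_0$ be the given KMS state with $\psi_0\restriction C(\nu X)$ faithful, i.e. the representing measure $\mu$ on $\nu X$ has full support. Using Urysohn I would choose a net $(f_i)$ in $C(\nu X)$ with $0\le f_i\le 1$, $f_i(x)=1$, and supports shrinking to $\{x\}$; full support gives $\psi_0(f_i)=\int f_i\,d\mu>0$, so the conditioned KMS states $\psi_i:=\psi_{0,f_i}$ satisfy $\psi_i(g)=\int f_i g\,d\mu/\int f_i\,d\mu\to g(x)$. A weak$^*$-limit point $\psi$ is a KMS state (the set $K$ of $(\sigma_h^\infty,\beta)$-KMS states is weak$^*$-compact, being cut out by the closed conditions $\psi(ba)=\psi(a\sigma_{i\beta}^\infty(b))$) with $\psi\restriction C(\nu X)=\mathrm{ev}_x$. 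To make it extreme, set $K_x=\{\psi\in K:\psi\restriction C(\nu X)=\mathrm{ev}_x\}$, a nonempty weak$^*$-compact convex subset. It is a face of $K$: if $\psi\in K_x$ and $\psi=\tfrac12(\psi_1+\psi_2)$ with $\psi_i\in K$, then $\mathrm{ev}_x=\tfrac12(\psi_1\restriction C(\nu X)+\psi_2\restriction C(\nu X))$, and since $\mathrm{ev}_x$ is a pure state of the commutative algebra $C(\nu X)$, both $\psi_i$ restrict to $\mathrm{ev}_x$. By Krein--Milman $K_x$ has an extreme point, which, being extreme in a face, is extreme in $K$; this is the desired extreme KMS state with $\psi(a)=a(x)$ on $C(\nu X)$.

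The conditioning lemma and the face argument are routine; the step I expect to require the most care is the construction in part (2) of a KMS state realizing the prescribed central character. There faithfulness is used essentially — it is precisely what guarantees $\psi_0(f_i)>0$ so that the conditioned states exist — and one must check that the concentration limit both preserves the KMS condition (weak$^*$-closedness of $K$) and pins the central value to $\mathrm{ev}_x$ rather than to some average over a neighborhood of $x$.
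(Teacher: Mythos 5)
Your proposal is correct and follows essentially the same route as the paper: the paper derives this theorem from a general result (Theorem \ref{ThmMainIntermission}) about KMS states and unital \cstar-subalgebras of the center, whose proof consists of exactly your three ingredients --- the conditioning lemma (Proposition \ref{PropCutByHInCenter}), the extremality-forces-character argument on $C(\nu X)=\cZ(\roeq(X))$ (Proposition \ref{PropRestrictCenterVanishes}), and the Urysohn-net concentration plus the face/Krein--Milman argument (Propositions \ref{PropKextremeSubset} and the proof of Theorem \ref{ThmMainIntermission}). The only difference is cosmetic: you specialize directly to $C(\nu X)$ and phrase part (1) via multiplicativity rather than via the support of the representing measure.
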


 Our methods give us a strong control on the support of KMS states on $\cstu(X)$. In order to state this control,   a definition is in place.  

 \begin{definition}\label{DefinitionSupp}
Let $X$ be a u.l.f.\ metric space, $x\in \nu X$, and   $\varphi$ be a state on $\cstu(X)$. We say that $\varphi$ is \emph{supported on $x$} if for all neighborhoods $U\subseteq h X$ of $x$, we have $\varphi(\chi_{U\cap X})=1$.
 \end{definition}

 \begin{theorem}\label{ThmUltimoINTRO}
 Let $X$ be a u.l.f.\ metric space,    $h\colon X\to \R$ be a coarse map, and $\beta\in \R$.  The following holds:
 \begin{enumerate}
 \item Any extreme  $(\sigma_h,\beta)$-KMS state on $\cstu(X)$ which vanishes on the compacts is supported at some element of $\nu X$.
 \item If there is a  $(\sigma_h,\beta)$-KMS state on $\cstu(X)$ which vanishes on the compacts and such that its induced state on $\roeq(X)$ is faithful on $C(\nu X)$, then for every $x\in \nu X$ there is a   $(\sigma_h,\beta)$-KMS state on $\cstu(X)$ supported on $x$.
 \end{enumerate}
 \end{theorem}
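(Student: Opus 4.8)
\emph{Plan.} The idea is to push everything down to the corona $\roeq(X)$, where Theorem~\ref{ThmExtRoeqINTRO} already pins extreme KMS states to points of the Higson corona $\nu X$, and then to convert the resulting evaluation identity $\psi(a)=a(x)$ on $C(\nu X)$ into the support condition of Definition~\ref{DefinitionSupp} by a Urysohn argument on the Higson compactification $hX$. The two parts share this final bridge; they differ only in how the extreme corona state $\psi$ is produced.

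For (1), let $\varphi$ be an extreme $(\sigma_h,\beta)$-KMS state on $\cstu(X)$ that vanishes on $\cK(\ell_2(X))$. Being extreme among all $(\sigma_h,\beta)$-KMS states, it is a fortiori extreme among those vanishing on the compacts, so by the affine isomorphism of Proposition~\ref{PropRoeqQStatesAffineIsoINTRO} the state $\psi$ on $\roeq(X)$ determined by $\varphi=\psi\circ\pi$ is an extreme $(\sigma_h^\infty,\beta)$-KMS state. Theorem~\ref{ThmExtRoeqINTRO}(1) then furnishes $x\in\nu X$ with $\psi(a)=a(x)$ for all $a\in C(\nu X)$. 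Now I would use the canonical identifications $C(hX)\subseteq\ell_\infty(X)$ and $C(\nu X)\subseteq\roeq(X)$, under which $\pi(f)=f\rs\nu X$ for every Higson function $f\in C(hX)$; this gives $\varphi(f)=\psi(\pi(f))=f(x)$ for all $f\in C(hX)$. Fix a neighborhood $U\subseteq hX$ of $x$, which we may take open (any neighborhood contains an open one $U_0$ with $\chi_{U_0\cap X}\le\chi_{U\cap X}$, so the open case suffices). Since $hX$ is compact Hausdorff, Urysohn's lemma supplies $g\in C(hX)$ with $0\le g\le 1$, $g(x)=1$, and $g\equiv 0$ on $hX\setminus U$. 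Viewed as diagonal operators, $g\rs X\le \chi_{U\cap X}\le 1$, so positivity of $\varphi$ yields $1=g(x)=\varphi(g\rs X)\le \varphi(\chi_{U\cap X})\le 1$, i.e. $\varphi(\chi_{U\cap X})=1$. Hence $\varphi$ is supported on $x$.

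For (2), the faithfulness hypothesis is precisely what Theorem~\ref{ThmExtRoeqINTRO}(2) requires: by Proposition~\ref{PropRoeqQStatesAffineIsoINTRO} the assumed KMS state on $\cstu(X)$ vanishing on the compacts descends to a $(\sigma_h^\infty,\beta)$-KMS state on $\roeq(X)$ whose restriction to $C(\nu X)$ is faithful. Fixing any $x\in\nu X$, Theorem~\ref{ThmExtRoeqINTRO}(2) then produces an extreme $(\sigma_h^\infty,\beta)$-KMS state $\psi$ on $\roeq(X)$ with $\psi(a)=a(x)$ for all $a\in C(\nu X)$, and $\varphi:=\psi\circ\pi$ is, by Proposition~\ref{PropRoeqQStatesAffineIsoINTRO}, a $(\sigma_h,\beta)$-KMS state on $\cstu(X)$ (automatically vanishing on the compacts). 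The Urysohn argument from the previous paragraph applies verbatim and shows $\varphi$ is supported on $x$.

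The bookkeeping with the affine isomorphism and with extremality is routine. The one delicate point --- and the step I expect to be the main obstacle --- is the bridge: making rigorous the identification $\pi(f)=f\rs\nu X$ for Higson functions and, through the order inequality $g\rs X\le\chi_{U\cap X}$, turning the single evaluation $\psi(a)=a(x)$ into the all-neighborhoods support condition. This is where the topology of $hX$ and the structure of $C(\nu X)=\cZ(\roeq(X))$ inside $\roeq(X)$ must be aligned carefully; everything else is a consequence of the results already established above.
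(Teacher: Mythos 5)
Your proposal is correct and follows essentially the same route as the paper: the paper derives this statement as a special case of its Theorem \ref{ThmUltimoGENERAL} (with $\bar X = hX$), which it in turn obtains by combining Proposition \ref{PropRoeqQStatesAffineIsoINTRO} with the corona-level Theorem \ref{ThmExtRoeqGENERAL}, exactly as you do. The only difference is that you explicitly supply the Urysohn-lemma bridge converting the evaluation identity $\psi(a)=a(x)$ on $C(\nu X)$ into the all-neighborhoods support condition --- a step the paper leaves implicit in calling the result a ``mere corollary'' --- and your treatment of that step, including the order inequality $g\rs X\le\chi_{U\cap X}\le 1$ and the reduction to open neighborhoods, is sound.
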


 In fact,  both  Theorems \ref{ThmExtRoeqINTRO}  and \ref{ThmUltimoINTRO}
have  versions that hold with  $C(\nu X)$ being substituted by   arbitrary unital \cstar-subalgebras of $C(\nu X)$ (see Theorems  \ref{ThmExtRoeqGENERAL} and   \ref{ThmUltimoGENERAL}).

In Subsection \ref{SubsectionSize}, we show that the Higson corona of any infinite u.l.f.\  metric space contains $2^{2^{\aleph_0}}$ elements (see Theorem \ref{ThmSizeCorona}). This result has been first obtained in \cite[Theorem 3]{Keesling1994TopProc}, but we chose to present an alternative and self-contained proof here for the readers convenience.  As a consequence of this result,    Theorem \ref{ThmExtRoeqINTRO} and Proposition \ref{PropRoeqQStatesAffineIsoINTRO} above imply that  if there is a $(\sigma_{h}^\infty,\beta)$-KMS state on $\roeq(X)$  whose restriction to $C(\nu X)$ is faithful, then there are  $2^{2^{\aleph_0}}$ extreme KMS states in both $\roeq(X)$ and $\cstu(X)$ (see Corollary \ref{CorSize}).

\subsection{Applications}\label{SubsectionIntroAppli}
Our methods can be applied to specific metric spaces. Notice that Theorem \ref{ThmStrongContKMSINTRO} implies that if the balls of $X$ have polynomial growth, then $\cstu(X)$ will have $(\sigma_{h},\beta)$-KMS states for any $\beta>0$ and any  ``reasonable''  $h\colon X\to \R$. Indeed, suppose $h$ is such that there is $L>0$ and $x_0\in X$ for which 
\[h(x)\geq \frac{d(x,x_0)}{L}-L \text{ for all }\ x\in X.\]
Suppose now $p$ is a polynomial controling the growth of the balls of $X$, i.e.,  every  ball in $X$ centered at $x_0$ of radius $r$ has at most $p(r)$ elements. Then, 
  the series $\sum_{x\in X}e^{-\beta h(x)}$ converges to a finite number for any $\beta>0$. Therefore, in order to find examples with interesting phase transition, it is advisable to look for metric spaces with large growth. This makes the $n$-branching trees  natural spaces to apply our theory to.

  We point out   that, due to the technical aspects of Theorems \ref{ThmExtRoeqINTRO} and \ref{ThmUltimoINTRO}, the result below is not a mere corollary of the results above and a deeper analysis of   Higson coronas as well as of the weak$^*$-limit of their strongly continuous KMS states is needed. The study of invariant means on semigroups developed by Chou in \cite{Chou1969PAMS} is also essential for the precise computation of the cadinality of extreme KMS states presented below.

   Given $n\in\N$, let $T_n$ denote the $n$-branching tree, i.e.,  $T_n=\{\emptyset\}\cup\bigcup_{k=1}^\infty\{1,\ldots, n\}^k$ and we endow $T_n$ with its canonical graph distance (see Section \ref{SectionApli} for details).  The \emph{branches} of $T_n$ are denoted by $[T_n]$, i.e., $[T_n]=\{1,\ldots, n\}^\N$. Given $\bar x=(x_i)_{i=1}^\infty\in [T_n]$, we let $\bar x | k=(x_1,\ldots, x_k)\in T_n$ and $\bar x| k^\smallfrown T_n$ denotes the words in $T_n$ which start with $\bar x| k$. 

\begin{theorem} 
Given $n\in\N$, let $T_n$ denote the $n$-branching tree endowed with its graph distance $d$ and   let $\emptyset$ denote its root. Let  $h\colon T_n\to \R$ be given by $h(x)=d(x,\emptyset)$ for all $x\in T_n$. Then there is a  $(\sigma_{h},\beta)$-KMS state on $\cstu(T_n)$ if and only if   $\beta\geq \log(n)$. Moreover,

\begin{enumerate}
    \item \label{ThmnBranchingTreeItem2}
For $\beta> \log(n)$,   there is a unique $(\sigma_{h},\beta)$-KMS state on $\cstu(T_n)$  and   this state is strongly continuous.

\item \label{ThmnBranchingTreeItem3} For $\beta=\log(n)$, the $(\sigma_{h},\beta)$-KMS states on $\cstu(T_n)$ all vanish on $\cK(\ell_2(T_n))$. Moreover,  for all $\bar x\in [T_n]$, there are  $2^{2^{\aleph_0}}$  extreme   $(\sigma_{h},\beta)$-KMS states $\varphi$   on $\cstu(T_n)$ such that  
\[\varphi(\chi_{\bar x|k^\smallfrown T_n})=1 \ \text{ for all }\  k\in\N.\]
Conversely, any extreme $(\sigma_{h},\beta)$-KMS state   on $\cstu(T_n)$ satisfies the above for an appropriate $\bar x\in [T_n]$.
\end{enumerate}\label{ThmnBranchingTreeCOMPLETEINTRO}
\end{theorem}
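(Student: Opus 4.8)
The plan is to run the general machinery in two regimes, using the partition function $Z(\beta)$ to control the strongly continuous states and an embedded Cuntz algebra to locate the transition. First I would compute $Z(\beta)=\sum_{x\in T_n}e^{-\beta h(x)}=\sum_{k=0}^\infty n^k e^{-\beta k}=\sum_{k=0}^\infty (ne^{-\beta})^k$, using that there are exactly $n^k$ vertices at distance $k$ from the root. This geometric series is finite precisely when $\beta>\log(n)$, so by Theorem \ref{ThmStrongContKMSINTRO} strongly continuous $(\sigma_h,\beta)$-KMS states exist, are unique, and are Gibbs exactly when $\beta>\log(n)$, and never exist for $\beta\le\log(n)$.

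The structural input is a recursion on the diagonal. Writing $p_x=\varphi(\chi_{\{x\}})$ for a KMS state $\varphi$ and using that the finite-propagation matrix units are analytic (Proposition \ref{PropAnalyticElementsINTRO}(\ref{PropAnalyticElements.Item2})) with $\sigma_{h,i\beta}(e_{xi,x})=e^{-\beta}e_{xi,x}$, since $h(xi)-h(x)=1$, the KMS identity applied to $a=e_{x,xi}$, $b=e_{xi,x}$ gives $p_{xi}=e^{-\beta}p_x$, hence $p_x=e^{-\beta\, d(x,\emptyset)}p_\emptyset$; summing over the $n^k$ vertices of level $k$ gives $\varphi(\chi_{\{d(\cdot,\emptyset)=k\}})=(ne^{-\beta})^k p_\emptyset$. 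As the partial sums equal $\varphi(\chi_{\{d(\cdot,\emptyset)\le N\}})\le 1$, the case $\beta\le\log(n)$ forces $p_\emptyset=0$, so $\varphi$ annihilates every $\chi_{\{x\}}$; since $\varphi=\varphi\circ E$ (Theorem \ref{ThmFactorsCondExpINTRO}) and the diagonal of a compact lies in $c_0(T_n)$, every KMS state then vanishes on $\cK(\ell_2(T_n))$, which is the first claim of item (\ref{ThmnBranchingTreeItem3}). To pin the transition, I would introduce the downward shifts $S_i=\sum_x e_{xi,x}$ ($i=1,\dots,n$): these have propagation $1$, satisfy $S_i^*S_i=1$, $\sum_i S_iS_i^*=1-\chi_{\{\emptyset\}}$, and $\sigma_{h,i\beta}(S_i)=e^{-\beta}S_i$, so in $\roeq(T_n)$ they descend to isometries $\pi(S_i)$ with $\sum_i\pi(S_i)\pi(S_i)^*=1$, a copy of $\mathcal O_n$ carrying the scaling flow. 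A corona KMS state restricts to it, and applying the KMS identity to $a=\pi(S_i)^*$, $b=\pi(S_i)$ yields $\psi(\pi(S_i)\pi(S_i)^*)=e^{-\beta}$; summing over $i$ forces $ne^{-\beta}=1$, i.e. $\beta=\log(n)$. By Proposition \ref{PropRoeqQStatesAffineIsoINTRO} there are thus no states vanishing on the compacts unless $\beta=\log(n)$. Combining these inputs through the reduction in Proposition \ref{PropKMSStateVanishComp}: for $\beta<\log(n)$ neither type exists, so there are no KMS states; for $\beta>\log(n)$ the vanishing-on-compacts states are excluded and only the unique Gibbs state remains, giving item (\ref{ThmnBranchingTreeItem2}).

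For the critical value $\beta=\log(n)$ I would construct states as weak$^*$ cluster points $\varphi_0$ of the Gibbs states as $\beta'\downarrow\log(n)$. Because $\sigma_{h,i\beta'}(b)\to\sigma_{h,i\log(n)}(b)$ in norm for finite-propagation $b$, $\varphi_0$ is a $(\sigma_h,\log(n))$-KMS state, it vanishes on the compacts since $Z(\beta')\to\infty$, and a direct computation gives $\varphi_0(\chi_{w^\smallfrown T_n})=n^{-|w|}$ for every word $w$; this proves existence and the remaining threshold direction. The projections $\chi_{w^\smallfrown T_n}$ are Higson functions (their variation is supported near the single edge above $w$) refining into the level partitions, so the commutative algebra they generate is $C([T_n])\subseteq C(\nu T_n)$, with the Gel'fand picture realizing the canonical fibration $q\colon\nu T_n\to[T_n]$, and $\varphi_0$ restricts to the full-support Bernoulli measure on $[T_n]$. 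Feeding this faithfulness on $C([T_n])$ into the subalgebra forms of Theorems \ref{ThmExtRoeqGENERAL} and \ref{ThmUltimoGENERAL} produces, for each branch $\bar x$, an extreme KMS state supported on $\bar x$, while Theorem \ref{ThmExtRoeqINTRO}(1) (resp.\ Theorem \ref{ThmUltimoINTRO}(1)) shows conversely that every extreme state is supported at a point of $\nu T_n$, hence over a unique branch $q(x)=\bar x$.

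The hard part is upgrading one extreme state per branch to $2^{2^{\aleph_0}}$. For this I would first show each fibre $K_{\bar x}=q^{-1}(\bar x)$ has cardinality $2^{2^{\aleph_0}}$: choosing a sparse subsequence $(\bar x|n_k)_k$ along the branch with $|n_k-n_l|\to\infty$ gives a coarsely discrete set whose Higson closure is $\beta\N\setminus\N$ and which lies inside $K_{\bar x}$. Then I would manufacture a KMS state whose restriction to \emph{all} of $C(\nu T_n)$ is faithful, so that Theorem \ref{ThmExtRoeqINTRO}(2) attaches a distinct extreme state to each of the $2^{2^{\aleph_0}}$ points of $K_{\bar x}$. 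Producing such a full-support state is the genuine obstacle, since the naive limit $\varphi_0$ is faithful only on $C([T_n])$; here I expect to average $\varphi_0$ over the self-similar level-shift structure of $T_n$ using the invariant means on $\N$ from Chou's work \cite{Chou1969PAMS}, whose abundance simultaneously secures full support and the exact extreme-state count. The upper bound $\le 2^{2^{\aleph_0}}$ is immediate from $|\nu T_n|=2^{2^{\aleph_0}}$ (Theorem \ref{ThmSizeCorona}).
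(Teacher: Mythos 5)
Up to the critical inverse temperature your argument is essentially sound, and in two places genuinely different from the paper's: you derive the threshold from the corona isometries $\pi(S_i)$ satisfying $\sum_i\pi(S_i)\pi(S_i)^*=1$ and $\sigma^\infty_{h,i\beta}(\pi(S_i))=e^{-\beta}\pi(S_i)$, forcing $ne^{-\beta}=1$ for any KMS state vanishing on the compacts, whereas the paper forces $\beta\ge\log(n)$ from positivity of $\varphi(e_{y,y})=e^{-\beta|y|}-ne^{-\beta(|y|+1)}$ (ruling out $\beta=0$ via non-amenability); and you verify the KMS property of the weak$^*$ limit $\varphi_0$ directly rather than citing closedness of the set of admissible $\beta$'s. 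Combined with Proposition \ref{PropKMSStateVanishComp}, Theorem \ref{ThmStrongContKMSINTRO}, the faithfulness of $\varphi_0$ on $C([T_n])$, and Theorems \ref{ThmExtRoeqGENERAL}/\ref{ThmUltimoGENERAL}, this correctly gives everything through ``at least one extreme state per branch, and every extreme state lives over a unique branch.''

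The final step --- upgrading to $2^{2^{\aleph_0}}$ extreme states \emph{per branch} --- is a genuine gap, and not only because you leave the construction as an expectation: the intermediate object you propose cannot exist. You want a KMS state whose restriction to \emph{all} of $C(\nu T_n)$ is faithful, so as to apply Theorem \ref{ThmExtRoeqINTRO}(2) at each point of the fibre over $\bar x$. But $C(\nu T_n)$ admits no faithful state whatsoever. Indeed, take an infinite thin set $C\subseteq T_n$ (Lemma \ref{LemmaThinSets}) and an uncountable almost disjoint family $\{A_i\}$ of infinite subsets of $C$; by Proposition \ref{PropThinSets} the invariant open sets $U_i=\bigcup_f\widehat{f[A_i]}$ are pairwise disjoint and nonempty, and, as in the proof of Theorem \ref{ThmSizeCorona}, their closures yield uncountably many pairwise orthogonal nonzero projections $\chi_{\overline{U_i}}\in C(\nu T_n)$ (Lemma \ref{LemmaInvariantCorona}). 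Any state assigns positive value to at most countably many members of an uncountable orthogonal family of projections (at most $k$ of them can exceed $1/k$), so no state on $C(\nu T_n)$ is faithful; equivalently, $\nu T_n$ is not ccc. Averaging $\varphi_0$ over invariant means cannot repair this. The paper's actual mechanism is different and does not pass through faithfulness on the corona centre at all: by Chou's lemma (Lemma \ref{LemmaChou}) one picks $2^{2^{\aleph_0}}$ pairwise disjoint nonempty closed invariant subsets $L_j\subseteq\beta T_n$, and for each one \emph{translates} a fixed critical KMS state by a point $\omega\in L_j$ via $\psi(a)=\varphi(\tilde a)$, $\tilde a(y)=\bar a(\tilde y(\omega))$, producing a nonempty weak$^*$-compact convex face $K_\beta^{L_j}$ of states supported in $L_j$ (Theorem \ref{ThmSemiG}); Krein--Milman in each face, a pigeonhole over the $2^{\aleph_0}$ branches, and the tree isometries of Lemma \ref{LemmaMovingKMSStatesOnTree} then distribute $2^{2^{\aleph_0}}$ extreme states to every branch. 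Some localization construction of this kind is needed in place of your faithfulness step.
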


For inverse temperature $\beta>\log(n)$, we actually have a precise formula for its unique KMS state (see Theorem \ref{ThmnBranchingTree}).

Finally, in Section \ref{SectionChaotic}, we discuss a somewhat unusual phenomenon known as \emph{chaotic
convergence} of KMS states.  In order to explain what this means, consider a flow $\sigma $ on a C*-algebra $A$ admitting a unique KMS state at inverse
temperature $\beta $, say $\varphi _\beta $, for every $\beta $ in an interval of the form $(\beta _0,\beta _0+\varepsilon )$, so that it makes sense to ask whether
or not the limit
  $$
  \lim_{\beta \to \beta _0^+} \varphi _\beta 
  $$
  exists (here the limit should be taken with  respect to the weak$^*$ topology).
  The most commonly observed behavior
  (see \cite{vanEnterRusze2007JStatPhy,ChazottesHochman2010CommMathPhy,CoronelRivera-Letelier2015JStaPhy,BissacotGaribaldiThieullen2018ErgDymSym})
  is when this limit exists, even when $\beta _0$ is critical, that is,  even when there are multiple $(\sigma ,\beta_0 )$-KMS states.

By \emph{chaotic convergence} of KMS states it is meant a situation where the above fails in the sense that there are different
sequences $\beta _n$ converging to $\beta _0$ from above for which the corresponding limit states differ.  This chaotic behavior
has been observed for ground states \cite{BissacotGaribaldiThieullen2018ErgDymSym}, that is, regarding the limit as $\beta  \to 
\infty $, but we are not aware of too many situations where this phenomenon happens at finite temperatures.

As detailed in
Theorem \ref{ThmRodrigoBissacot} below, we analyze this question for  $\cstu(T_n)$ as $\beta$ approaches $\log(n)$ from
above, showing that such chaotic behavior is indeed present.

\color{black}

\section{Basics on KMS states on uniform Roe algebras} 

In this section, we start our study of KMS states of uniform Roe algebras and prove several general properties which will be essential throughout these notes. We also present some simple examples by studying the KMS states of the simplest coarse space: $\{n^2\mid n\in\N\}$. We start this section introducing some   notation which was left out from Section \ref{SectionIntro}. 

Given a set $X$ and   $x,y\in X$, we let $e_{x,y}\in \cB(\ell_2(X))$ be the rank 1 partial isometry  sending  $\delta_y$ to $\delta_x$. If   $A\subseteq X$, we let 
\[\chi_A=\SOTh\sum_{x\in X}e_{x,x};\]
where the letters $\mathrm{SOT}$ above mean that the sum converges with respect to the strong operator topology.  In other words, $\chi_A$ is the canonical orthogonal projection $\ell_2(X)\to \ell_2(A)$. Under the identification of $\ell_\infty(X)$ with the \cstar-subalgebra of $\cstu(X)$ consisting of the  diagonal operators, we have that $\chi_A\in \ell_\infty(X)$ for all $A\subseteq X$. The \cstar-algebra of functions $X\to \C$ which vanish at infinity is identified with the compact operators in $\ell_\infty(X)$, i.e., 
\[c_0(X)=\ell_\infty(X)\cap \cK(\ell_2(X)).\] 

The following description of  operators in  $\cstu[X]$   will be very useful for our goals: Firstly, recall that a \emph{partial bijection of $X$} is a bijection $f\colon A\to B$ between subsets $A$ and $B$ of $X$. If moreover \[\sup_{x\in A}d(x,f(x))<\infty,\] then we say that $f$ is a \emph{partial translation}. Given any partial translation $f\colon A\subseteq X\to B\subseteq X$, we define an operator $v_f$ on $\ell_2(X)$ by letting 
    \begin{equation}\label{EqDefvf}
    v_f\delta_x=\left\{\begin{array}{ll}
       \delta_{f(x)},  & x\in A , \\
        0, & x\notin A. 
    \end{array}\right.\end{equation}
So, each $v_f$ is a partial isometry and the algebraic uniform Roe algebra is linearly spanned by products of elements in $\ell_\infty(X)$ by those partial isometries. Precisely, we have  
    \[\cstu[X]=\mathrm{span}\Big\{av_f\mid a\in \ell_\infty(X) \text{ and } f\text{ is a partial translation on }X\Big\}\] 
(see  \cite[Lemma 2.4]{SpakulaWillett2017}
    for details).

\subsection{Flows and analytic elements} Our very first result shows that the actions $\sigma_h\colon \R\curvearrowright \cstu(X)$ are indeed   flows if and only if $h$ is coarse. 

\begin{proposition}\label{PropItIsAFlow}
    Let $X$ be a u.l.f.\ metric space and $h\colon X\to \R$ be a pode  map. Then $h$ is coarse if and only if the action $\sigma_h$ given by Definition \ref{DefinitionFlow} is strongly continuous, i.e., \begin{equation}\label{EqPropItIsAFlow}
    t\in \R\mapsto \sigma_{h,t}(a)\in \cstu(X)
    \end{equation} is continuous for all $a\in \cstu(X)$.
\end{proposition}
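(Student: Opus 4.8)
The plan is to reduce to a dense set of well-behaved operators and then estimate matrix entries. First I would note that $\sigma_{h,t}=\Ad(e^{it\bar h})$ is conjugation by the diagonal unitary $e^{it\bar h}$ (whose $(x,x)$-entry is $e^{ith(x)}$), so each $\sigma_{h,t}$ is an isometric $*$-automorphism of $\cB(\ell_2(X))$; in particular the family $\{\sigma_{h,t}\}_{t\in\R}$ is uniformly bounded. Since a direct computation (below) shows $\sigma_{h,t}$ preserves the algebraic uniform Roe algebra $\cstu[X]$, it preserves its closure $\cstu(X)$ as well. Because $\cstu[X]$ is dense in $\cstu(X)$ and the $\sigma_{h,t}$ are uniform isometries, a standard $\e/3$-argument reduces the continuity of $t\mapsto\sigma_{h,t}(a)$ for arbitrary $a\in\cstu(X)$ to that of $t\mapsto\sigma_{h,t}(b)$ for $b\in\cstu[X]$. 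Moreover, using the group law and that each $\sigma_{h,s}$ is isometric, $\norm{\sigma_{h,t}(b)-\sigma_{h,s}(b)}=\norm{\sigma_{h,t-s}(b)-b}$, so it is enough to prove continuity at $t=0$, i.e.\ that $\norm{\sigma_{h,t}(b)-b}\to0$ as $t\to0$ for each $b\in\cstu[X]$.

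The key computation is at the level of matrix entries. Fix $b\in\cstu[X]$ with $R:=\propg(b)$. Since $e^{\pm it\bar h}$ are diagonal, one gets $(\sigma_{h,t}(b))_{x,y}=e^{ith(x)}b_{x,y}e^{-ith(y)}=e^{it(h(x)-h(y))}b_{x,y}$, whence $(\sigma_{h,t}(b)-b)_{x,y}=\big(e^{it(h(x)-h(y))}-1\big)b_{x,y}$. In particular $\sigma_{h,t}(b)$ has the same propagation as $b$, which justifies the invariance of $\cstu[X]$ used above. Now the entries of $\sigma_{h,t}(b)-b$ vanish unless $d(x,y)\le R$, and here coarseness of $h$ enters: there is $S=S(R)>0$ with $|h(x)-h(y)|\le S$ whenever $d(x,y)\le R$. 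Combining this with $|e^{i\theta}-1|\le|\theta|$ and $|b_{x,y}|\le\norm b$, every entry of $\sigma_{h,t}(b)-b$ is bounded in modulus by $|t|\,S\,\norm b$.

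The last step converts this uniform entrywise bound into an operator-norm bound. Because $b$ has propagation at most $R$ and $X$ is u.l.f., each row and each column of $\sigma_{h,t}(b)-b$ has at most $N_R:=\sup_{x\in X}\card{\{y\in X:d(x,y)\le R\}}<\infty$ nonzero entries. Feeding the entrywise bound into the Schur test then gives $\norm{\sigma_{h,t}(b)-b}\le N_R\,|t|\,S\,\norm b$, which tends to $0$ as $t\to0$, completing the argument.

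I expect the main obstacle to be exactly this final passage from entrywise to operator-norm control: smallness of individual matrix coefficients does not in general bound the norm of an operator. What rescues the estimate is the conjunction of finite propagation and uniform local finiteness, which caps the number of nonzero entries in each row and column and makes the Schur test effective; coarseness of $h$ is the hypothesis that turns the band condition $d(x,y)\le R$ into the bounded-phase condition $|h(x)-h(y)|\le S$ driving the entry estimate. (If $h$ were merely bounded the phases would be globally bounded and the same proof would apply, consistent with Proposition~\ref{PropAnalyticElementsINTRO}.)
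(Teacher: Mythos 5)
Your proof is correct, and it reaches the same two essential ingredients as the paper's --- density of $\cstu[X]$ plus coarseness of $h$ turning the band condition $d(x,y)\le R$ into a uniform phase bound $|h(x)-h(y)|\le S$ --- but the final norm estimate is carried out differently. The paper further decomposes a finite-propagation operator into a finite sum of terms $av_f$ with $a\in\ell_\infty(X)$ and $f$ a partial translation (citing the span lemma of {\v{S}}pakula--Willett), and for each such term the difference $\sigma_{h,t}(av_f)-\sigma_{h,s}(av_f)$ is again a diagonal operator times the partial isometry $v_f$, so its norm is \emph{exactly} the supremum of the moduli of its nonzero entries; no Schur-type argument is needed. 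You instead keep a general band operator $b$ and pay for it with the Schur test, using uniform local finiteness to cap the number of nonzero entries per row and column at $N_R$, arriving at the bound $\norm{\sigma_{h,t}(b)-b}\le N_R\,|t|\,S\,\norm{b}$. Both routes use u.l.f.\ in the end (the paper's span decomposition into finitely many partial translations is itself where u.l.f.\ enters there), and both use coarseness identically. Your version is more self-contained in that it avoids the decomposition lemma, at the cost of a slightly lossier constant; your preliminary reductions (the $\e/3$ argument via uniform isometry of the $\sigma_{h,t}$, and the reduction to continuity at $t=0$ via the group law) are correct and are left implicit in the paper. Your closing observation correctly identifies the passage from entrywise to operator-norm control as the point where finite propagation and u.l.f.\ are indispensable.
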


\begin{proof}
 Suppose first that $h$ is coarse.  Since $\cstu[X]$ is dense in $\cstu(X)$, it is enough to show that  the map in \eqref{EqPropItIsAFlow} is continuous for each $a\in \cstu[X]$. Moreover, since $\cstu[X]$ is spanned by the subset of all $av_f$, for $a\in \ell_\infty(X)$ and $f\colon A\subseteq X\to B\subseteq X$ a partial translation, it is enough to notice that \eqref{EqPropItIsAFlow} holds for all such elements $av_f$. Fix such $a$ and $f$. Then, as $f$ is a partial bijection, we have that
   \begin{align}\label{EqPropItIsAFlow2}\|\sigma_{h,t}(av_f)-\sigma_{h,s}(av_f)\|&=\|e^{it\bar h}av_fe^{-it\bar h}-e^{is\bar h}av_fe^{-is\bar h}\|\\
   &=\sup_{x\in A}\Big(e^{it (h(f(x))-h(x))}-e^{is (h(f(x))-h(x))}\Big)a_{f(x),f(x)}.\notag
   \end{align}
Since $f$ is a partial translation and $h$ is coarse, we have
\[\sup_{x\in A}|h(f(x))-h(x)|<\infty.\]
Therefore, it follows from \eqref{EqPropItIsAFlow2} and the intermediate value theorem that  \[t\in \R\mapsto \sigma_t(av_f)\in \cstu(X)\]
is continuous.

Suppose now that the action $\sigma_h$ is strongly continuous. Suppose towards a contradiction that $h$ is not coarse. Then there is $r>0$, and sequences $(x_i)_i$ and $(y_i)_i$ in $X$ such that $\lim_i|h(x_i)-h(y_i)|=\infty$ and $d(x_i,y_i)\leq r$ for all $i\in\N$. As $X$ is u.l.f., those sequences cannot be bounded, so, by going to a subsequence if necessary, we assume that $(x_i)_i$ and $(y_i)_i$ are sequences of distinct points of $X$. We can then define a map 
\begin{align*}f\colon \{x_i\mid i\in \N\}&\to \{y_i\mid i\in \N\}\\
x_i&\mapsto y_i
\end{align*}
and this map is a   partial translation. So, $v_f\in \cstu[X]$ and, since $\sigma_h$ is strongly continuous, we have that 
\[\lim_{t\to 0}\|\sigma_t(v_f)-v_f\|=0.\]
Fix $\delta>0$ such that 
\[|t|<\delta\ \text{ implies } \ \|\sigma_t(v_f)-v_f\|<2.\]

Notice now that
\begin{align*}
    \|\sigma_t(v_f)-v_f\|&=
    \|e^{it\bar h}v_fe^{-it\bar h}-v_f\|\\
    &=\sup_{x\in X}\Big|e^{it (h(f(x))- h(x))}-1\Big|
    \\
    &\geq  \sup_{i\in \N}\Big|e^{it (h(y_i)- h(x_i))}-1\Big|.
\end{align*}
Hence, picking $i\in\N$ large enough so that 
\[t=\frac{\pi}{|h(y_i)-h(x_i)|}<\delta,\]
we obtain that $\|\sigma_t(v_f)-v_f\|\geq 2$; contradiction.
\end{proof}

We now show that our choice of only dealing with flows of the form $\sigma_h$ for some coarse map $h\colon X\to \R$ does not represent a big restriction in a sense.

\begin{proposition}\label{PropFlowsPreservinglinfty}
    Let $X$ be a u.l.f.\ metric space and let $\sigma\colon \R\curvearrowright \cstu(X)$ be a flow leaving $\ell_\infty(X)$ invariant, i.e., $\sigma(\ell_\infty(X))\subseteq \ell_\infty(X)$ for all $t\in \R$. Then, there is a coarse map $h\colon X\to \R$ such that $\sigma=\sigma_h$.
\end{proposition}

\begin{proof}
    We first notice that the condition  of $\sigma\colon \R\curvearrowright \cstu(X)$ leaving $\ell_\infty(X)$ invariant implies that $\sigma_t$ is the identity on $\ell_\infty(X)$ for all $t\in \R$. Indeed, as $\sigma_0$ is by hypothesis the identity on $\cstu(X)$, we have that $\sigma_0(e_{x,x})=e_{x,x}$ for all $x\in X$. As $\sigma_t$ is an isomorphism for all $t\in \R$, $\sigma_t(e_{x,x})$ must be a projection for all $t\in \R$ and all $x\in X$. Therefore, since $t\in \R\mapsto \sigma_t(e_{x,x})\in \ell_\infty(X)$ is continuous, this shows that $\sigma_t(e_{x,x})=e_{x,x}$ for all $t\in \R$ and all $x\in X$. Hence, $\sigma_t$ must be the identity on $c_0(X)$ for all $t\in \R$. As isomorphisms of uniform Roe algebras are strongly continuous (\cite[Lemma 3.1]{SpakulaWillett2013}), this shows that each $\sigma_t$ is the identity on $\ell_\infty(X)$ are desired.
 
    Fix $x\in X$.  For each $\xi\in \ell_2(X)$, let $r_\xi$  be the rank one operator given by
    \[r_\xi\zeta=\langle \zeta,\delta_x\rangle\xi \ \text{ for all }\ \zeta\in \ell_2(X).\]
 For each $t\in \R$, define an operator $u_t$ on $\ell_2(X)$ by letting 
    \[u_t\xi=\sigma_t(r_\xi)\delta_x\ \text{ for all }\ \xi\in \ell_2(X).\]

\begin{claim}\label{ClaimFlow1}
  We have   \[\sigma_t(a)=u_tau_{-t}\ \text{ for all} \ a\in \cstu(X)\ \text{ and all }\ t\in \R.\]  In particular, $u_t\in \ell_\infty(X)$ for all $t\in\R$.
\end{claim}

\begin{proof}
     First notice that \begin{equation}\label{EqFLowForm1}
     ae_{x,x}=r_{a\delta_x}\ \text{ for all }\ t\in \R\ \text{ and all }a\in \cstu(X). 
     \end{equation}
     Hence, by the arbitrariness of $a$ above, this implies that 
    \[u_tau_{-t}\xi=u_ta\sigma_{-t}(r_\xi)\delta_x=\sigma_t(a\sigma_{-t}(r_\xi))\delta_x=\sigma_t(a)r_\xi\delta_x=\sigma_t(a)\xi\]
    for all $\xi\in \ell_2(X)$, all $t\in \R$, and all $a\in \cstu(X)$.
    
For the last claim, notice that, as each $\sigma_t$ is the identity on $\ell_\infty(X)$, the previous paragraph  implies that each $u_t$ commutes with the elements of $\ell_\infty(X)$. As $\ell_\infty(X)$ is a maximal abelian subalgebra of $\cstu(X)$, this gives that $u_t\in \ell_\infty(X)$ for all $t\in \R$.
\end{proof}

\begin{claim}\label{ClaimFlow2}
The family $(u_t)_t$ is a one-parameter unitary group, i.e.,  $t\in\R\mapsto u_t\xi\in\ell_2(X)$ is continuous for all $\xi\in \ell_2(X)$,  $u_{t+s}=u_tu_s$ for all $t,s\in \R$, and each $u_t$ is a unitary,
\end{claim}

\begin{proof}
    First notice that, as $t\in \R\mapsto\sigma_t(r_\xi)\in \cstu(X)$ is continuous,     $t\in \R\mapsto u_t\xi\in \ell_2(X)$ is also continuous   for all $\xi\in \ell_2(X)$. Also, using  \eqref{EqFLowForm1}, we have
\[ u_t(u_s\xi)=u_t(\sigma_s(r_\xi)\delta_x)=\sigma_t(\sigma_s(r_\xi))\delta_x=\sigma_{t+s}(r_\xi)\delta_x=u_{t+s}\xi\]
    for all $\xi\in \ell_2(X)$ and all $t,s\in \R$. Finally, as each $u_t$ is an element of $\ell_\infty(X)$ with   norm at most one satisfying $u_{t}u_{-t}=1$, this also shows that $u_t$ is a unitary.
\end{proof}
    
By Claims \ref{ClaimFlow1}  and \ref{ClaimFlow2}, there is  map $h\colon X\to \R$ such that 
\[u_t=e^{it\bar h}\ \text{ for all }\ t\in \R.\]
Therefore, by Claim \ref{ClaimFlow1}, we have that $\sigma=\sigma_h$. By Proposition \ref{PropItIsAFlow}, it follows that $h$ must be coarse.
    \end{proof}

In order to study the KMS states on uniform Roe algebras which are given by the flows defined above, it is essential to understand the analytic elements of this flow. This is precisely the content of Proposition \ref{PropAnalyticElementsINTRO}.

\begin{proof}[Proof of Proposition \ref{PropAnalyticElementsINTRO}]
\eqref{PropAnalyticElements.Item1} If $h$ is bounded, $\bar h$ is a bounded operator on $\ell_2(X)$. Therefore, the analyticity of $e^z$ gives that 
\[z\in \C\to e^{-iz\bar h}ae^{iz\bar h}\in \cstu(X)\]
is analytic for all $a\in \cstu(X)$.

\eqref{PropAnalyticElements.Item2} 
Since $\cstu[X]$ is spanned by the subset of all $av_f$, for $a\in \ell_\infty(X)$ and $f\colon A\subseteq X\to B\subseteq X$ a partial translation, it is enough to show that each such $av_f$ is analytic. Fix such $a$ and $f\colon A\subseteq X\to B\subseteq X$, and  let
$g\colon X\to \R$ be given by  
\[g(x)=\left\{\begin{array}{ll}
h(f(x))-h(x),& x\in A,\\
0,& x\not\in A.
\end{array}\right.\]
A simple computation   gives that
\[\sigma_{h,t}(av_f)=e^{it\bar g}av_f\]
for all $t\in \R$. As $d(f(x),x)\leq r$ for all $x\in A$,  $g$ is   bounded. Then, the analyticity of $e^z$ implies the that
\[z\in \C\to e^{iz\bar g}av_f\in \cstu(X)\] is analytic; so, $av_f$ is analytic.
\end{proof}

\subsection{Factoring KMS-states through $\ell_\infty(X)$}\label{SubsectionFactorlInfty} It is often common in the study of KMS states on a given \cstar-algebra $A$ that there is some ``natural'' \cstar-subalgebra $B\subseteq A$ and a conditional expectation $E\colon A\to B$ such that the KMS states $\varphi\colon A\to \C$ factor through $E$; precisely, $\varphi=\varphi \circ E$, so the diagram below commutes.
\begin{equation*}
    \xymatrix{ 
        A \ar[dr]_E  \ar[rr]^\varphi& &\C  \\
         & B \ar[ur]_{\varphi\restriction B}&  }
\end{equation*} 
We now show that this also happens  with KMS state on uniform Roe algebras. 

Recall, $\ell_\infty(X)$ is a Cartan masa of $\cstu(X)$ and the conditional expectation $E\colon \cstu(X)\to \ell_\infty(X)$ is simply deleting the matrix entries of the operators on $\cstu(X)$ which are not in the main diagonal.  Precisely, the canonical conditional expectation $E\colon \cstu(X)\to \ell_\infty(X)$ is defined as follows:
\[\langle E(a)\delta_x,\delta_y\rangle= \left\{\begin{array}{ll} a_{x,x},& x=y,\\
0,& x\neq y,\end{array}\right.\]
for all $a=[a_{x,y}]\in \cstu(X)$ and all $x,y\in X$.

\begin{proof}[Proof of Theorem \ref{ThmFactorsCondExpINTRO}]
    As $\cstu[X]$ is dense in $\cstu(X)$, it is enough to show that $\varphi(a)=\varphi(E(a))$ for all $a\in \cstu[X]$. Moreover, as   $\cstu[X]$ is the span of all $av_f$, where $ a\in \ell_\infty(X)$ and  $f$ is a partial translation on $X$,   it is enough   to show that $\varphi(av_f)=0$ for all $a\in \ell_\infty(X)$ and all partial translations $f\colon A\subseteq X\to B\subseteq X$ such that $f(x)\neq x$ for all $x\in A$; fix $a$ and $f$ as such. 
    
    Let $r=\sup_{x\in A}d(x,f(x))$;  as $f$ is a partial translation, $r$ is finite. As $X$ is u.l.f., there is a partition \[A=A_1\sqcup\ldots\sqcup A_n\] such that each $A_i$ is $2r$-separated, i.e.,  $d(x,y)>2r$ for all $i\in \{1,\ldots, n\}$ and all distinct $x,y\in A_i$. Therefore, 
    \[d(x,f(y))\geq d(x,y)-d(y,f(y))>r\]
    for all $i\in \{1,\ldots, n\}$ and all distinct $x,y\in A_i$; in particular, $x\neq f(y)$. Moreover, as   $f(x)\neq x$ for all $x\in A$, this shows that 
    \begin{equation}\label{Eq.IntAfA}
    A_i\cap f(A_i)=\emptyset
    \end{equation} for all $i\in \{1,\ldots, n\}$. 
    
    For each $i\in \{1,\ldots, n\}$, let $f_i=f\restriction A_i$. So, \eqref{Eq.IntAfA} implies that $\chi_{A_i}v_{f_i}=0$ for all $i\in \{1,\ldots, n\}$. Therefore, since  \[\chi_{A_i}\sigma_{h,i\beta}(av_{f_i})=\chi_{A_i}e^{-\beta \bar h}v_{f_i}e^{\beta\bar h}=e^{-\beta \bar h}\chi_{A_i}v_{f_i}e^{\beta\bar h}=0,\] we conclude that 
    \[\varphi(av_{f_i})=\varphi( av_{f_i}\chi_{A_i})=\varphi(\chi_{A_i}\sigma_{h,i\beta}(av_{f_i}))=0.\]
    Since $v_f=v_{f_1}+\ldots+v_{f_n}$, this finishes the proof.
\end{proof}

As KMS states on uniform Roe algebras factor through the canonical conditional expectation $E\colon \cstu(X)\to \ell_\infty(X)$, it will be very useful to have a condition on when a state $\varphi$ on $\cstu(X)$ satisfies the KMS condition which depends only on operators on $\ell_\infty(X)$. We first introduce some notation which will be used in the next proof. Given   $a=(a_y)_y\in \ell_{\infty}(X)$ and a partial bijection $f\colon A\subseteq X\to B\subseteq X$, we let $a_{\circ f}\in \ell_\infty(X)$ be the operator given by 
\[a_{\circ f}\delta_x=\left\{\begin{array}{ll}
   a_{f(x)}\delta_x,  &x\in A,  \\
    0, &  x\not\in A,
\end{array}\right.\]
for all $x\in X$.\footnote{Here is a justification for this cumbersome notation: if $a\in \ell_\infty(X)$, then one can see $a$ as a bounded sequence, say $a=(a_x)_{x\in X}$. Then $a_{\circ f}$ is   the extension of $(a_{f(x)})_{x\in A}$ to the whole $X$ by letting the coordinates not in $ A$ be zero.} 

\begin{theorem}\label{ThmSufficientKMS}
    Let $X$ be a u.l.f.\ metric space,  $h\colon X\to \R$ be coarse, and $\beta\in \R$. Suppose $\varphi$ is a state on $\ell_\infty(X)$. Then $\varphi$ satisfies  
     \begin{equation}\label{EqVarphiSufficient}
        \varphi(\chi_{f(A)} )
=\varphi\Big(\chi_A e^{\beta (\overline{  h- h\circ f})} \Big)
\end{equation}
  for all   partial translations $f\colon A\to f(A)$ on $X$    if and only if   $\varphi\circ E$ is a  $(\sigma_{h},\beta)$-KMS state on $\cstu(X)$; where $E\colon \cstu(X)\to \ell_\infty(X)$ is the canonical conditional expectation. 
\end{theorem}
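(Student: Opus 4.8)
The plan is to prove both implications by reducing everything to the spanning elements $av_f$ and exploiting Theorem~\ref{ThmFactorsCondExpINTRO}, which tells us that any KMS state must factor through $E$. For the forward direction, assume \eqref{EqVarphiSufficient} holds; I must verify that $\psi := \varphi\circ E$ satisfies the KMS identity $\psi(a\sigma_{h,i\beta}(b))=\psi(ba)$ for all $a\in\cstu(X)$ and all analytic $b$. By density and linearity it suffices to check this on products of the spanning elements $av_f$ (with $a\in\ell_\infty(X)$, $f$ a partial translation), using Proposition~\ref{PropAnalyticElementsINTRO}\eqref{PropAnalyticElements.Item2} so that these serve as analytic test elements. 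The key computation is that $\sigma_{h,i\beta}(av_f)=e^{-\beta\bar g}av_f$ where $g(x)=h(f(x))-h(x)$ on $A$, extending the formula $\sigma_{h,t}(av_f)=e^{it\bar g}av_f$ established in the proof of Proposition~\ref{PropAnalyticElementsINTRO} to the purely imaginary argument $t=i\beta$.

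First I would record the reductions. Because $\psi=\varphi\circ E$ kills off-diagonal entries, I only need to understand $E(x_1 v_{f_1}\,\sigma_{h,i\beta}(x_2 v_{f_2}))$ and $E(x_2 v_{f_2}\,x_1 v_{f_1})$; the diagonal of a product $x_1 v_{f_1}\cdot e^{-\beta\bar g}x_2 v_{f_2}$ is supported exactly on those $x$ where the composite partial translation $f_1\circ f_2$ (or its relevant restriction) returns to $x$. The cleanest route is to reduce first to the case of a single pair: since $E$ is linear and the spanning set is closed under the relevant products (a product $v_{f_1}v_{f_2}$ is again of the form $v_{f_1\circ f_2}$ up to a diagonal support projection), one shows both sides of the KMS identity reduce to a sum of terms of the shape $\varphi(\chi_C)$ or $\varphi(\chi_D e^{\beta\overline{h-h\circ f}})$, and these are matched precisely by \eqref{EqVarphiSufficient}. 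Concretely, the ``diagonal-producing'' part of $a\,\sigma_{h,i\beta}(b)$ corresponds to the part of $ba$ where one partial translation inverts the other, i.e. $f_2 = f_1^{-1}$ on the overlap; on that overlap the weight $e^{\beta(\overline{h-h\circ f})}$ appears exactly as in the hypothesis, and the change of variables $x\mapsto f(x)$ relating $\chi_{f(A)}$ to $\chi_A$ is what \eqref{EqVarphiSufficient} encodes.

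For the converse, assume $\varphi\circ E$ is a $(\sigma_h,\beta)$-KMS state; I would specialize the KMS identity to the test pair $a=v_f$ and $b=v_f^*=v_{f^{-1}}$ (or to $a=v_{f^{-1}}$, $b=v_f$), which are analytic by Proposition~\ref{PropAnalyticElementsINTRO}\eqref{PropAnalyticElements.Item2}, and read off \eqref{EqVarphiSufficient} directly. Indeed $v_f v_f^* = \chi_{f(A)}$ and $v_f^* v_f=\chi_A$, while $\sigma_{h,i\beta}(v_f)=e^{-\beta\bar g}v_f$ supplies the weight $e^{\beta\overline{h-h\circ f}}$ after composing with $v_f^*$; feeding these into $\psi(ba)=\psi(a\sigma_{h,i\beta}(b))$ and applying $E$ (which is the identity on the resulting diagonal operators) yields exactly the required equality of $\varphi(\chi_{f(A)})$ with $\varphi(\chi_A e^{\beta(\overline{h-h\circ f})})$. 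To make sure general $a$ (with a nontrivial diagonal coefficient) are covered, I would note that multiplying $f$ by an arbitrary diagonal does not affect the support bookkeeping, so testing on the bare partial isometries suffices.

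The main obstacle I anticipate is the forward direction's combinatorial bookkeeping: carefully tracking when the product of two partial translations contributes to the diagonal, and checking that the off-diagonal contributions are annihilated symmetrically on both sides of the KMS identity so that $E$ leaves only the matching terms. The partition-into-separated-pieces trick used in the proof of Theorem~\ref{ThmFactorsCondExpINTRO} (decomposing $A$ into finitely many $2r$-separated sets to control the supports) will likely be needed again to justify that the only surviving diagonal contribution comes from the ``return'' locus where $f_2$ inverts $f_1$; everything else vanishes under $E\circ(\varphi\circ E)$ exactly as in that earlier argument. Once this support analysis is in place, matching the two sides is a direct application of the change-of-variables identity \eqref{EqVarphiSufficient}.
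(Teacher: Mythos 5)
Your proposal is correct and follows essentially the same route as the paper: the converse direction by testing the KMS identity on the pair $v_f$, $v_f^*$ (yielding $\chi_{f(A)}$, $\chi_A$ and the weight $e^{\beta(\overline{h-h\circ f})}$), and the forward direction by reducing, after applying $E$, to the locus where one partial translation inverts the other and then invoking \eqref{EqVarphiSufficient} extended by linearity to all of $\ell_\infty$. One small simplification over what you anticipate: since $\varphi\circ E$ kills off-diagonal operators by definition, the non-returning pieces of $v_gav_f$ vanish under $E$ pointwise, so the $2r$-separation trick from Theorem \ref{ThmFactorsCondExpINTRO} is not needed here.
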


\begin{proof}
Suppose first that  $\varphi$ is a $(\sigma_{h},\beta)$-KMS on $\cstu(X)$. Let $f\colon A\to f(A)$ be a partial translation on $X$. Then, $\chi_{f(A)}=\chi_{f(A)}v_fv_f^*$. As 
\[v_f^*\sigma_{h,i\beta}(\chi_{f(A)}v_f)=v_f^*e^{-\beta \bar h}\chi_{f(A)}v_fe^{\beta \bar h}=\chi_A e^{\beta (\overline{ h-h\circ f})},\]
  the KMS condition gives that 
\[\varphi(\chi_{f(A)})=\varphi(v_f^*\sigma_{h,i\beta}(\chi_{f(A)}v_f))=\varphi\Big(\chi_A e^{\beta (\overline{ h-h\circ f})}\Big).\]

Suppose now that $\varphi$ satisfies \eqref{EqVarphiSufficient}. First, notice that as $\ell_\infty(X)$ is linearly generated by the characteristic functions on $X$, this implies that      \begin{equation} \label{EqVarphiSufficient5}
        \varphi(c )
=\varphi\Big(c_{\circ f} e^{\beta (\overline{ h-h\circ f})} \Big)
\end{equation}
for all   partial translations $f$ on $X$ and all $c\in \ell_\infty(\mathrm{Im}(f))$. By abuse of notation, we extend $\varphi$ to the whole $\cstu(X)$ and still denote it by $\varphi$, i.e.,   $\varphi=\varphi\circ E$. In order to show that  $\varphi\circ E$ is a  $(\sigma_{h},\beta)$-KMS state on $\cstu(X)$, it is enough to show the KMS condition for elements of the form $av_f$, where $a\in \ell_\infty(X)$ and $f$ is a partial translation  of $X$.

Fix $a,b\in \ell_\infty(X)$ and partial translations $f$ and $g$ on $X$. Let 
\[A=\Big\{x\in \mathrm{Dom}(f)\mid f(x)\in \mathrm{Dom}(g)\text{ and }g(f(x))=x\Big\}.\]
and notice that $g\restriction f(A)=(f\restriction A)^{-1}$. We can then write
 \begin{align*}
 v_gav_f= & v_{g\restriction f(A)}av_{f\restriction A}+v_{g\restriction \mathrm{Dom}(g)\setminus f(A)}av_{f\restriction A}\\
 &+v_{g\restriction f(A)}av_{f\restriction \mathrm{Dom}(f)\setminus A} +v_{g\restriction \mathrm{Dom}(g)\setminus f(A)}av_{f\restriction \mathrm{Dom}(f)\setminus A}\\
 =&  v_{(f\restriction A)^{-1}}av_{f\restriction A}+v_{g\restriction f(A)}av_{f\restriction \mathrm{Dom}(f)\setminus A} +v_{g\restriction \mathrm{Dom}(g)\setminus f(A)}av_{f\restriction \mathrm{Dom}(f)\setminus A}.
 \end{align*}
Notice that the last two terms in the right handside of the equality above are in the kernel of the conditional expectation $E$. Therefore, 
\[E(bv_gav_f)=E(b v_{(f\restriction A)^{-1}}av_{f\restriction A}).\]
For this reason, it is enough to check the KMS condition for partial translations of $X$ which are inverse of each other. For now on, assume that $g=f^{-1}$.

Let us now show the KMS condition holds. Firstly, notice that \begin{equation}\label{EqVarphiSufficient2}
bv_fav_f^*=ba_{\circ f^{-1}}\text{ and } av_f^*e^{-\beta \bar h}bv_fe^{\beta \bar h}=a b_{\circ f}  e^{\beta (\overline{ h-h\circ f})}.
\end{equation} 
Then, letting $c=ba_{\circ f^{-1}}$, we have that  $c\in \ell_\infty(\mathrm{Im}(f))$ and 
\[c_{\circ f}=v_f^*cv_f=v_f^*bv_fav_f^*v_f=v_f^*bv_fa=ab_{\circ f}\]
Therefore,  \eqref{EqVarphiSufficient5}  gives that
\begin{align*}
\varphi(bv_fav_f^*)&=\varphi(ba_{\circ f^{-1}})\\
&=\varphi(c)\\
&=\varphi(c_{\circ f} e^{\beta (\overline{ h-h\circ f})} )\\
&=\varphi(a b_{\circ f}  e^{\beta (\overline{ h-h\circ f})} )\\
&=\varphi\Big(av_f^*e^{-\beta \bar h}bv_fe^{\beta \bar h} \Big)\\
&=\varphi( av_f^*\sigma_{ h,i\beta}(bv_f) ) .
\end{align*}
This shows that $\varphi$ is a $(\sigma_{h},\beta)$-KMS state on $\cstu(X)$.
\end{proof}

\subsection{Amenable spaces}
A priori, our flows of interest  $\sigma_{h}$ are given by any coarse map $h\colon X\to \R$ (see Proposition \ref{PropAnalyticElementsINTRO}). Therefore, being automatically coarse, bounded maps form a natural class of maps to produce flows in uniform Roe algebras. However, as we show in this subsection, the existence of KMS states for such flows reduces to the amenability of the metric space, equivalently, to the uniform Roe algebra having a positive unital trace (see \cite[Theorem 4.6]{RoeBook}). Recall:

\begin{definition}
A u.l.f.\ metric space $X$ is \emph{amenable} if there is a nonzero finitely additive measure $\mu\colon \cP(X)\to [0,\infty)$  which is \emph{invariant}, i.e.,  $\mu(A)=\mu(B)$ for all $A,B\subseteq X$ such that there is a partial translation $f\colon A\to B$. We call such measure an \emph{invariant  mean}.
\end{definition}

\begin{theorem}\label{ThmBoundedhAmenable}
Let $X$ be a u.l.f.\ metric space,  $h\colon X\to \R$ be a bounded map, and $\beta\in\R$. Then $\cstu(X)$ has  a $(\sigma_{h},\beta)$-KMS state  if and only if $X$ is amenable.
\end{theorem}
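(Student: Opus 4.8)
The plan is to run everything through the reduction to the diagonal subalgebra furnished by Theorems \ref{ThmFactorsCondExpINTRO} and \ref{ThmSufficientKMS}. Together these say that $\cstu(X)$ admits a $(\sigma_h,\beta)$-KMS state if and only if there is a state $\varphi$ on $\ell_\infty(X)$ satisfying \eqref{EqVarphiSufficient}, the correspondence being $\varphi\mapsto\varphi\circ E$ and restriction back. Since $h$ is bounded, $e^{\pm\beta\bar h}$ is a \emph{positive, invertible} diagonal operator in $\ell_\infty(X)$, so $\varphi(e^{\beta\bar h})>0$ for every state $\varphi$. The whole proof then rests on a single weight-cancellation identity: for any partial translation $f\colon A\to f(A)$ both sides of
\[e^{-\beta\bar h}\chi_A\,e^{\beta(\overline{h-h\circ f})}=\bigl(\chi_{f(A)}e^{-\beta\bar h}\bigr)_{\circ f}\]
equal the diagonal operator with entry $e^{-\beta h(f(x))}$ at each $x\in A$ and $0$ elsewhere. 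The KMS weight $e^{\beta(\overline{h-h\circ f})}$ is exactly what cancels the weight $e^{-\beta\bar h}$ against a translation by $f$, and there is an analogous identity with $e^{+\beta\bar h}$. My plan is to use these identities to match states satisfying \eqref{EqVarphiSufficient} with invariant means.

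For the implication that a KMS state forces amenability, I would start from a state $\varphi$ on $\ell_\infty(X)$ satisfying \eqref{EqVarphiSufficient} and set $\mu(A)=\varphi(e^{\beta\bar h}\chi_A)$ for $A\subseteq X$. Positivity and commutativity inside $\ell_\infty(X)$ make $\mu$ a finitely additive $[0,\infty)$-valued measure, and $\mu(X)=\varphi(e^{\beta\bar h})>0$ shows it is nonzero. For invariance, fix a partial translation $f\colon A\to f(A)$, put $c=e^{\beta\bar h}\chi_{f(A)}\in\ell_\infty(\mathrm{Im}(f))$, and apply the linearized form \eqref{EqVarphiSufficient5} of the KMS condition; a direct check using the $e^{+\beta\bar h}$ identity gives $c_{\circ f}e^{\beta(\overline{h-h\circ f})}=e^{\beta\bar h}\chi_A$, whence $\mu(f(A))=\varphi(c)=\varphi(c_{\circ f}e^{\beta(\overline{h-h\circ f})})=\mu(A)$. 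Thus $\mu$ is an invariant mean and $X$ is amenable.

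For the converse, I would take an invariant mean, normalize it to a state $\varphi_0$ on $\ell_\infty(X)$, and define $\varphi(a)=\varphi_0(e^{-\beta\bar h}a)/\varphi_0(e^{-\beta\bar h})$, which is again a state because $e^{-\beta\bar h}$ is positive and central in $\ell_\infty(X)$. The one point needing care is to upgrade the set-level invariance $\varphi_0(\chi_{f(A)})=\varphi_0(\chi_A)$ to the function-level statement $\varphi_0(c)=\varphi_0(c_{\circ f})$ for every $c\in\ell_\infty(f(A))$: this follows by writing $c$ as a uniform limit of simple functions $\sum_i c_i\chi_{B_i}$ with $B_i\subseteq f(A)$, applying set-invariance to each restriction $f\rs f^{-1}(B_i)$, and invoking norm-continuity of $\varphi_0$. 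Feeding $c=\chi_{f(A)}e^{-\beta\bar h}$ into the displayed identity above then yields \eqref{EqVarphiSufficient} for $\varphi$, so $\varphi\circ E$ is the desired KMS state.

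The step I expect to be the main obstacle is precisely this function-level invariance in the converse: one must pass from the purely set-theoretic definition of an invariant mean to genuine invariance of $\varphi_0$ on all of $\ell_\infty(f(A))$, which needs the simple-function approximation together with continuity of the state. Everything else is bookkeeping around the weight-cancellation identity, and it is exactly the boundedness of $h$ that keeps $e^{\pm\beta\bar h}$ bounded and invertible, so that the correspondence $\varphi\leftrightarrow\mu$ is well defined in both directions.
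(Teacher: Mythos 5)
Your proof is correct, but it is routed differently from the paper's. The paper's argument passes through traces on the whole of $\cstu(X)$: it invokes the known equivalence between amenability of $X$ and the existence of a positive unital trace on $\cstu(X)$ (\cite[Theorem 4.6]{RoeBook}), and then uses Lemma \ref{LemmaAutKMSTrace}, a purely algebraic bijection $\tau\mapsto\varphi_{\tau,u}$ between tracial functionals and functionals satisfying the twisted trace identity, with $u=e^{\pm\beta\bar h}$ playing the role of the twist; positivity of the resulting functionals is then checked using Theorem \ref{ThmFactorsCondExpINTRO} and boundedness of $h$. You instead stay entirely on the diagonal: you use Theorems \ref{ThmFactorsCondExpINTRO} and \ref{ThmSufficientKMS} to reduce the existence of a KMS state to the existence of a state on $\ell_\infty(X)$ satisfying \eqref{EqVarphiSufficient}, and then match such states with invariant means directly via the weight-cancellation identity. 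The underlying mechanism is the same in both proofs --- multiplication by $e^{\pm\beta\bar h}$ converts the invariance/trace condition into the KMS condition, and boundedness of $h$ keeps this operator invertible in $\ell_\infty(X)$ so the normalizations are nonzero --- but your version is more self-contained (it does not need Lemma \ref{LemmaAutKMSTrace} or the trace--amenability equivalence from Roe's book, only the definition of an invariant mean), at the cost of having to carry out the simple-function approximation step that upgrades set-level invariance of the mean to invariance on all of $\ell_\infty(f(A))$. That step, which you correctly flag as the delicate point, is sound: restrictions of partial translations are partial translations, so set-invariance applies to each $f\rs f^{-1}(B_i)$, and norm-continuity of the state finishes the argument. (The same linearization is in fact implicit in the paper's own derivation of \eqref{EqVarphiSufficient5} from \eqref{EqVarphiSufficient} inside the proof of Theorem \ref{ThmSufficientKMS}, so you are not assuming anything the paper does not.)
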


Before proving Theorem \ref{ThmBoundedhAmenable}, we isolate a  straightforward lemma which  highlights the relation between the trace and the KMS condition when the KMS state is given by elements in the \cstar-algebra.

\begin{lemma}\label{LemmaAutKMSTrace}
Let $A$ be a $C^*$-algebra and $u\in A$ be invertible. Consider the following assignments: 
\begin{enumerate}
    \item For each functional $\tau$ on $A$, let $\varphi_{\tau,u}$ be the functional given by $\varphi_{\tau,u}(a)=\tau(au)$ for all $a\in A$. 

    \item For each functional $\varphi$ on $A$, let $\tau_{\varphi,u}$ be the functional given by $\tau_{\varphi,u}(a)=\varphi(au^{-1})$ for all $a\in A$. 
\end{enumerate}
The assignment $ \tau\mapsto \varphi_\tau$ defines a bijection between the functionals $\tau $ on $A$ such that $\tau(ab)=\tau(ba)$ and the functionals $\varphi$ on $A$ such that $    \varphi(ab)=\varphi(buau^{-1}) $ for all $a,b\in A$; the inverse of this assignment  is $\varphi\mapsto \tau_{\varphi,u}$ with the appropriate domain/codomain.\qed
\end{lemma}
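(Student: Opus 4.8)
The plan is to verify directly that the two assignments $\tau\mapsto\varphi_{\tau,u}$ and $\varphi\mapsto\tau_{\varphi,u}$ land in the claimed sets of functionals and are mutually inverse. The statement is purely algebraic, so there are no analytic subtleties whatsoever; the entire content lies in the bookkeeping of where $u$ and $u^{-1}$ travel under the cyclic (trace) and twisted moves. I would organize the argument into three short verifications: forward direction, reverse direction, and the inverse relation.

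First I would check that if $\tau$ is tracial, i.e.\ $\tau(ab)=\tau(ba)$ for all $a,b\in A$, then $\varphi=\varphi_{\tau,u}$ satisfies the twisted identity $\varphi(ab)=\varphi(buau^{-1})$. Unwinding the definitions, the left-hand side is $\tau(abu)$ while the right-hand side is $\tau(buau^{-1}u)=\tau(bua)$; traciality applied to the pair $(bu,a)$ gives $\tau(bua)=\tau(abu)$, which is exactly what is needed. Conversely, I would show that if $\varphi$ satisfies $\varphi(ab)=\varphi(buau^{-1})$, then $\tau=\tau_{\varphi,u}$ is tracial. Here $\tau(ab)=\varphi(abu^{-1})$, and applying the twisted identity to $x=a$ and $y=bu^{-1}$ yields $\varphi(a\cdot bu^{-1})=\varphi(bu^{-1}\,u\,a\,u^{-1})=\varphi(bau^{-1})=\tau(ba)$, so $\tau(ab)=\tau(ba)$.

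Finally, the two assignments are mutually inverse by a one-line substitution in each direction: $\tau_{\varphi_{\tau,u},u}(a)=\varphi_{\tau,u}(au^{-1})=\tau(au^{-1}u)=\tau(a)$, and symmetrically $\varphi_{\tau_{\varphi,u},u}(a)=\tau_{\varphi,u}(au)=\varphi(auu^{-1})=\varphi(a)$. Linearity of all the maps is immediate from the definitions, so no separate check of affineness or boundedness is required. I do not expect any genuine obstacle: the only thing demanding care is inserting $u^{-1}u=1$ (equivalently $uu^{-1}=1$) at the correct spot and applying cyclicity to the correctly grouped pair, and the invertibility of $u$ is used precisely to make those cancellations and the inverse assignment well defined.
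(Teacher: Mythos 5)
Your proof is correct and is exactly the direct verification the paper intends: the lemma is stated with its proof omitted (marked \qed), and your three computations --- traciality of $\tau$ giving the twisted identity for $\varphi_{\tau,u}$ via $\tau(abu)=\tau(bua)$, the converse via the substitution $b\mapsto bu^{-1}$, and the two one-line cancellations showing the assignments are mutually inverse --- are precisely what fills it in. Nothing further is needed.
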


 \begin{proof}[Proof of Theorem \ref{ThmBoundedhAmenable}]
We start recalling a well-known fact about uniform Roe algebras: a  u.l.f.\  metric space has a positive unital trace if and only if it is amenable (\cite[Theorem 4.6]{RoeBook}). In fact, if $\mu$ is a nontrivial invariant mean on $X$, say $\mu(X)=1$, and $E\colon \cstu(X)\to \ell_\infty(X)$ is the canonical conditional expectation, then 
\[\tau(a)=\int_XE(a) d\mu,\ \text{ for all }\ a\in \ell_\infty(X),\]
defines a positive unital trace on $\cstu(X)$. On the other hand, if $\tau$ is a positive unital trace on $\cstu(X)$, then 
\[\mu(A)=\tau(\chi_A)\ \text{ for all }\ A\subseteq X\]
defines an invariant mean on $X$.

Suppose then that $X$ is amenable and that $\tau$ is the trace on $\cstu(X)$ given by a nontrivial invariant mean $\mu $ on $X$ as above. By Lemma \ref{LemmaAutKMSTrace}, $\varphi_{\tau, e^{\beta\bar  h}}$ satisfies the $(\sigma_{h},\beta)$-KMS condition. Moreover, using the formula of $\tau$, we have that 
\[\varphi_{\tau, e^{\beta \bar h}}(a)=\int_XE(a)e^{\beta \bar h} d\mu\ \text{ for all }\ a\in \cstu(X).\]
Therefore, $\varphi$ is positive and,  as $t=\sup_{x\in X} |h(x)|<\infty$, we have that 
\[\varphi_{\tau, e^{\beta \bar h}}(\chi_X)=\int_Xe^{\beta \bar h} d\mu\geq  e^{-|\beta| t} \mu(X)>0.\] Therefore, normalizing $\varphi$, we obtain a $(\sigma_{h},\beta)$-KMS state on $\cstu(X)$.
 
Suppose now that $\varphi$ is a $(\sigma_{h},\beta)$-KMS state on $\cstu(X)$. By Lemma \ref{LemmaAutKMSTrace},  $\tau_{\varphi,e^{\beta \bar h}}$  satisfies the trace condition, i.e., $\tau_{\varphi,e^{\beta \bar h}}(ab)=\tau_{\varphi,e^{\beta \bar h}}(ba)$ for all $a,b\in \cstu(X)$. As $\varphi$ is positive and factors through the canonical conditional expectation $\cstu(X)\to \ell_\infty(X)$ (Theorem \ref{ThmFactorsCondExpINTRO}), $\tau_{\varphi, e^{\beta\bar h}}$ is also positive. Finally,  it follows form our definition of $t$ that \[\tau_{\varphi,e^{\beta\bar h}}(\chi_X)=\varphi(e^{-\beta\bar  h})\geq \varphi(e^{-|\beta|t}\chi_{X})>0.\]
So, normalizing $\tau$, we obtain a positive unital trace on $X$. 
  \end{proof}

As Theorem \ref{ThmBoundedhAmenable} completely takes care of bounded maps, we can now restrict our analyses to unbounded coarse maps $h\colon X\to \R$.

\subsection{Strongly continuous KMS states}
This section deals with strongly continuous KMS states. As we  shall see below, those states are the easiest to get and, whenever they exist, they are unique (Theorem \ref{ThmStrongContKMSINTRO}). We also show that the set of $\beta$'s for which a strongly continuous KMS state exists must be either of the form $(t,\infty)$ or $[t,\infty)$, for some $t\geq 0$ (Corollary \ref{CorPropStrongContKMS} for a precise statement).

\begin{proposition}\label{PropBetaNegSingletons}
    Let $X$ be a u.l.f.\ metric space,  $h\colon X\to [0,\infty)$ be an unbounded coarse map, and $\beta<0$. If $\varphi$ is a $(\sigma_{h},\beta)$-KMS state, then $\varphi(e_{x,x})=0$ for all $x\in X$. In particular, there are no strongly continuous $(\sigma_{  h},\beta)$-KMS states on $\cstu(X)$.
\end{proposition}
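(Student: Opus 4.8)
The plan is to squeeze out of the KMS condition an exact comparison between the two rank-one weights $\varphi(e_{x,x})$ and $\varphi(e_{y,y})$, and then to drive $h(y)\to\infty$ so that the sign of $\beta$ produces a contradiction. The whole argument is local to pairs of points, so no heavy machinery beyond the definition of a KMS state and the analyticity of finite-propagation operators is needed.

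First I would fix $x,y\in X$ and feed the pair $a=e_{x,y}$, $b=e_{y,x}$ into the KMS relation $\varphi(a\sigma_{h,i\beta}(b))=\varphi(ba)$. Since $d(x,y)<\infty$, the operator $e_{y,x}$ has finite propagation, hence lies in $\cstu[X]$ and is analytic for $\sigma_h$ by the second part of Proposition \ref{PropAnalyticElementsINTRO}. A direct computation with the diagonal exponentials gives $\sigma_{h,t}(e_{y,x})=e^{it(h(y)-h(x))}e_{y,x}$, whose analytic continuation at $t=i\beta$ is $\sigma_{h,i\beta}(e_{y,x})=e^{-\beta(h(y)-h(x))}e_{y,x}$. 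Using $e_{x,y}e_{y,x}=e_{x,x}$ and $e_{y,x}e_{x,y}=e_{y,y}$, the KMS identity collapses to
\[
\varphi(e_{y,y})=e^{-\beta(h(y)-h(x))}\,\varphi(e_{x,x}).
\]
(The same relation could be read off from the characteristic-function identity of Theorem \ref{ThmSufficientKMS} applied to the one-point partial translation $x\mapsto y$, but the direct computation avoids invoking the factorization through $E$.)

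Next I would suppose, for contradiction, that $\varphi(e_{x,x})>0$ for some fixed $x$, and use the unboundedness of $h$ to select points $y$ with $h(y)$ arbitrarily large. As $\beta<0$, the scalar $e^{-\beta(h(y)-h(x))}=e^{(-\beta)(h(y)-h(x))}$ tends to $+\infty$, so the right-hand side above is unbounded; yet the left-hand side satisfies $0\le\varphi(e_{y,y})\le\|e_{y,y}\|=1$ because $\varphi$ is a state and $e_{y,y}\ge0$. This forces $\varphi(e_{x,x})=0$, and since $x$ was arbitrary the main claim follows. For the ``in particular'' clause, a strongly continuous state $\varphi$ is continuous along the increasing net of finite partial sums of $\chi_X=\SOTh\sum_{x\in X}e_{x,x}$, so $1=\varphi(\chi_X)=\sum_{x\in X}\varphi(e_{x,x})=0$, which is absurd; hence no strongly continuous $(\sigma_h,\beta)$-KMS state exists when $\beta<0$.

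I expect the argument to be essentially routine, with the only two points meriting care being the analytic continuation $t\mapsto i\beta$ in the computation of $\sigma_{h,i\beta}(e_{y,x})$ (legitimate because $e_{y,x}$ is analytic by Proposition \ref{PropAnalyticElementsINTRO}) and, in the last clause, the precise sense in which a strongly continuous state commutes with the SOT sum defining $\chi_X$. Neither is a genuine obstacle; the conceptual heart of the proof is simply the exponential blow-up of $e^{-\beta(h(y)-h(x))}$ against the uniform bound $\varphi(e_{y,y})\le 1$.
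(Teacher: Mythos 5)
Your argument is correct and is essentially the paper's own proof: both derive the identity $\varphi(e_{y,y})=e^{-\beta(h(y)-h(x))}\varphi(e_{x,x})$ from the KMS relation applied to the rank-one partial isometries $e_{x,y}$, $e_{y,x}$, and then let $h(y)\to\infty$ so that the exponential factor (which tends to $0$ or $\infty$ depending on which side you solve for) forces $\varphi(e_{x,x})=0$ against the bound $\varphi(e_{y,y})\le 1$. The treatment of the ``in particular'' clause via $1=\varphi(\chi_X)=\sum_x\varphi(e_{x,x})$ likewise matches the intended reasoning.
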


\begin{proof}
    Fix $x\in X$. As $h$ is unbounded, there is a sequence $(x_n)_n$ in $X$ such that $\lim_n h(x_n)=\infty$. Then, if $\varphi$ is a $(\sigma_{  h},\beta)$-KMS state on $\cstu(X)$, we have
    \[\varphi(e_{x,x})=\varphi(e_{x,x_n}e_{x_n,x})=\varphi(e_{x_n,x}\sigma_{h,i\beta}(e_{x,x_n}))=e^{\beta(h(x_n)-h(x))}\varphi(e_{x_n,x_n}).\]
    As $(\varphi(e_{x_n,x_n}))_n$ is bounded and $\beta<0$, we conclude that $\varphi(e_{x,x})=0$ by  letting $n$ go to infinity. 
\end{proof}

\begin{proof}[Proof of Theorem \ref{ThmStrongContKMSINTRO}]
Suppose $\varphi$ is a strongly continuous $(\sigma_{ h},\beta)$-KMS state on $\cstu(X)$.  Fix an auxiliar $x_0\in X$ (this can be thought of as the ``center'' of $X$). Since all maps $(f_x\colon \{x_0\}\to \{x\})_{x\in X}$ are partial translations, the KMS condition gives us that 
\[\varphi(e_{x,x})=e^{-\beta(h(x)-h(x_0))}\varphi(e_{x_0,x_0})\]
for all $x\in X$ (see Theorem \ref{ThmSufficientKMS}). As $\varphi$ is strongly continuous, \[1=\varphi(\chi_X)=\sum_{x\in X}\varphi(e_{x,x})=e^{\beta h(x_0)}\varphi(e_{x_0,x_0})\sum_{x\in X}e^{-\beta h(x)}. \] So, $\varphi(e_{x_0,x_0})\neq 0$ and  \[Z(\beta)=\sum_{x\in X}e^{-\beta h(x)}=\frac{1}{e^{\beta h(x_0)}\varphi(e_{x_0,x_0})}\] must be finite (as well as independent on $x_0$). The formula for $\varphi$ in the statement of the theorem then follows immediately from the strong continuity of $\varphi$.

Suppose now $Z(\beta)$ is finite and $\varphi$ is given as in the  statement of the theorem.  Clearly, $\varphi$ is a strongly continuous state on $\cstu(X)$. Moreover, if $f\colon A\to B$ is a partial translation of $X$, then, by the formula of $\varphi$, we have 
\begin{align*}
    \varphi(\chi_{f(A)})&=\frac{1}{Z(\beta)}\sum_{x\in f(A)}e^{-\beta h(x)}\\
    &=\frac{1}{Z(\beta)}\sum_{x\in A}e^{-\beta h(f(x))}\\
& =   \varphi(\chi_{A}e^{\beta(\overline{ h-h\circ f})}).
\end{align*}
So, by Theorem \ref{ThmSufficientKMS}, $\varphi$ is  a $(\sigma_{h},\beta)$-KMS state on $\cstu(X)$.
\end{proof}

The following is a straightforward consequence of Proposition \ref{PropBetaNegSingletons} and Theorem \ref{ThmStrongContKMSINTRO}.

\begin{corollary}\label{CorPropStrongContKMS}
     Let $X$ be a u.l.f.\ metric space and $h\colon X\to [0,\infty)$ be an unbounded coarse map.   The subset of all $\beta\in \R$ for which there are strongly continuous $(\sigma_{  h},\beta)$-KMS states on $\cstu(X)$ is either of the form $(t,\infty)$ or $[t,\infty)$ for some $t\geq 0$.\qed
\end{corollary}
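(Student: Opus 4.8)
The plan is to prove Corollary \ref{CorPropStrongContKMS} by establishing that the set of admissible $\beta$'s is an \emph{upward-closed} subset of $(0,\infty)$, and then using the structure of the partition function $Z(\beta)$ to conclude it is an interval of the stated form. By Theorem \ref{ThmStrongContKMSINTRO}, a strongly continuous $(\sigma_h,\beta)$-KMS state exists if and only if $Z(\beta)=\sum_{x\in X}e^{-\beta h(x)}<\infty$, so the whole problem reduces to understanding the set $S=\{\beta\in\R\mid Z(\beta)<\infty\}$. First I would record that, by Proposition \ref{PropBetaNegSingletons}, there are no strongly continuous KMS states for $\beta<0$; equivalently, since $h$ is unbounded above (being coarse and unbounded on a u.l.f.\ space, $h$ must take arbitrarily large values, so $Z(\beta)=\infty$ when $\beta\le 0$ as each summand is $\ge 1$ for infinitely many $x$). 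Thus $S\subseteq(0,\infty)$ and it suffices to analyze $Z$ on the positive reals.

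The key step is monotonicity of the \emph{region of convergence}. I would show that if $Z(\beta_0)<\infty$ for some $\beta_0>0$, then $Z(\beta)<\infty$ for every $\beta\ge\beta_0$. This is immediate from a termwise comparison: since $h$ is coarse and unbounded, $h(x)\to\infty$ along $X$ (more precisely, only finitely many $x$ satisfy $h(x)\le 0$, using that balls are finite and $h$ is proper in the relevant sense), so for all but finitely many $x$ we have $h(x)>0$ and hence $e^{-\beta h(x)}\le e^{-\beta_0 h(x)}$ whenever $\beta\ge\beta_0$. Summing, $Z(\beta)\le Z(\beta_0)+C<\infty$, where $C$ accounts for the finitely many exceptional terms. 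This shows $S$ is upward closed, so $S=(t,\infty)$ or $S=[t,\infty)$ where $t=\inf S\in[0,\infty]$.

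It then remains to rule out $t=\infty$ and pin down $t\ge 0$. I would argue $t<\infty$ using the growth hypothesis implicit in the setup: because $X$ is u.l.f., the balls $B(x_0,r)$ have size bounded by some function of $r$, and coarseness of $h$ gives a comparison between $h(x)$ and $d(x,x_0)$; combined with uniform local finiteness this forces $Z(\beta)$ to converge for all sufficiently large $\beta$ (indeed the introduction already notes that polynomial ball-growth plus a linear lower bound $h(x)\ge d(x,x_0)/L-L$ yields convergence for every $\beta>0$). In general one does not know a priori that $t<\infty$, but the statement of the corollary only asserts the \emph{form} of $S$ once it is nonempty, so I would phrase the conclusion as: either $S=\varnothing$, or $S$ is one of $(t,\infty)$, $[t,\infty)$; and since we already excluded $\beta<0$, we get $t\ge 0$. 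The main obstacle, and the only genuinely substantive point, is the monotonicity comparison—specifically handling the finitely many $x$ with $h(x)\le 0$ cleanly; everything else is formal interval bookkeeping. Once upward-closedness is in hand, writing $t=\inf S$ and observing that the infimum is either attained or not gives exactly the two cases $[t,\infty)$ and $(t,\infty)$, completing the proof.
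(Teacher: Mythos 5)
Your proposal is correct in substance and takes the same route the paper intends: the paper gives no written proof of Corollary \ref{CorPropStrongContKMS}, presenting it as immediate from Proposition \ref{PropBetaNegSingletons} (which excludes $\beta<0$) and Theorem \ref{ThmStrongContKMSINTRO} (which, for $\beta\ge 0$, reduces existence to convergence of $Z(\beta)$), plus the elementary upward-closedness of $\{\beta>0\mid Z(\beta)<\infty\}$ — exactly your outline.

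One justification you give is wrong, although the step it supports survives. A coarse unbounded map $h$ on a u.l.f.\ space need \emph{not} be proper and need not satisfy $h(x)\to\infty$: take $X=\Z$ with the usual metric and $h(n)=n$, which is coarse and unbounded yet has infinitely many points with $h\le 0$. What the monotonicity step actually requires is that $\{x\in X\mid h(x)\le 0\}$ be finite \emph{under the hypothesis} $Z(\beta_0)<\infty$ with $\beta_0>0$ — and that is automatic, since each such $x$ contributes a term $e^{-\beta_0 h(x)}\ge 1$ to a convergent sum. With that repair, the comparison $Z(\beta)\le C+Z(\beta_0)$ for $\beta\ge\beta_0$ goes through and upward-closedness follows. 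Your caveat that $S$ may be empty is also well taken (e.g.\ $X=\N$ with $h(n)=\log\log(n+3)$ is coarse and unbounded but $Z(\beta)=\sum_n(\log(n+3))^{-\beta}=\infty$ for every $\beta$); this is an imprecision in the corollary's statement rather than in your argument, and your reading — that the corollary describes the form of the set of admissible $\beta$'s when that set is nonempty — is the sensible one. Your remark that $t<\infty$ cannot be deduced from coarseness alone is likewise accurate: coarseness only bounds $h$ from above in terms of $d(\cdot,x_0)$, which is the wrong direction for convergence.
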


 \begin{remark}
Throughout these notes, we will see many examples for which the set of $\beta$'s admitting are strongly continuous $(\sigma_{  h},\beta)$-KMS states on $\cstu(X)$ are of the form $(\beta_0,\infty)$ for some $\beta_0>0$. This could give the impression this must always be the case, however, this is not so. For instance, let $X=\{n\in\N\mid n\geq 3\}$ and let $h(x)=\log(x\log^2(x))$ for all $x\in X$ (the restriction of $x\geq 3$ is simply so that $h$ is well defined). In this case, 
\[\sum_{n=3}^\infty e^{-\beta h(x)}=\sum_{n=3}^\infty  \frac{1}{x^\beta\log^{2\beta}(x)}\]
and this series converges if and only if $\beta\geq 1$.
 \end{remark}

\subsection{The simplest coarse space}
Under the optics of coarse geometry, the simplest infinite metric space is the \emph{coarse disjoint union of singletons}; i.e.,   any metric space which is  bijectively coarsely equivalent to 
 \[X_0=\Big\{n^2\in \N\mid n\in\N\Big\},\] 
 where $X_0$ is endowed with the usual metric $d$ on the 
  natural numbers.  
  In this subsection, we study KMS states on $X_0$.  The simplicity of the  geometry of $X_0$ makes any map $h\colon X_0\to Y$, where $Y$ is another metric space, be automatically coarse. Also,  given any $r>0$, there is a finite $F\subseteq X_0\times X_0$ such that     \[\Big\{(x,y)\in X_0\times X_0\mid d(x,y)<r\Big\}=\Big\{(x,x)\in X_0\times X_0\mid x\in X_0\Big\}\cup F.\]  Therefore, it follows that \[\cstu(X)=\ell_\infty(X)+\cK(\ell_2(X)).\]

\begin{proposition}\label{PropCoarseDisjUnionGEN}
Let $(X_0,d)$ be the coarse disjoint union of singletons described above. If $\varphi$ is a state on $\ell_\infty(X_0)$ such that $\varphi\restriction c_0(X_0)=0$, then $\varphi\circ E$ is a $(\sigma_{h},\beta)$-KMS state on $\cstu(X_0)$ for all $h\colon X_0\to \R$ and all $\beta\in \R$; where $E\colon \cstu(X_0)\to \ell_\infty(X_0)$ denotes the canonical conditional expectation.
\end{proposition}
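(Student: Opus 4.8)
The plan is to invoke Theorem \ref{ThmSufficientKMS}, which reduces the problem to a single identity on $\ell_\infty(X_0)$: it suffices to verify that
\[\varphi(\chi_{f(A)}) = \varphi\big(\chi_A e^{\beta(\overline{h - h\circ f})}\big)\]
holds for every partial translation $f\colon A \to f(A)$ on $X_0$. Since $\varphi$ is a state on $\ell_\infty(X_0)$ by hypothesis, establishing this identity for all such $f$ (and the given $h,\beta$) immediately yields, via Theorem \ref{ThmSufficientKMS}, that $\varphi\circ E$ is a $(\sigma_h,\beta)$-KMS state on $\cstu(X_0)$.

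The structural feature I would exploit is the one recorded just before the statement: for each $r>0$ there is a finite $F\subseteq X_0\times X_0$ containing every off-diagonal pair $(x,y)$ with $d(x,y)<r$, reflecting the fact that the gaps between consecutive elements of $X_0$ grow to infinity. Given a partial translation $f$ with $r = \sup_{x\in A} d(x,f(x)) < \infty$, I would split $A = A_0 \sqcup A_1$, where $A_0 = \{x\in A : f(x)=x\}$ and $A_1 = \{x\in A : f(x)\neq x\}$. Each $x\in A_1$ produces an off-diagonal pair $(x,f(x))$ with $d(x,f(x))\le r$, so $A_1$, and hence $f(A_1)$, is finite; this is the crucial point and the only place where the sparse geometry of $X_0$ genuinely enters. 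Consequently both $\chi_{A_1}$ and $\chi_{f(A_1)}$ lie in $c_0(X_0) = \ell_\infty(X_0)\cap\cK(\ell_2(X_0))$.

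With this decomposition in hand, I expect both sides of the identity to collapse to $\varphi(\chi_{A_0})$. On the left, $f$ fixes $A_0$ pointwise, so $f(A) = A_0 \sqcup f(A_1)$ and $\chi_{f(A)} = \chi_{A_0} + \chi_{f(A_1)}$; the hypothesis $\varphi\restriction c_0(X_0) = 0$ annihilates the second summand. On the right, the diagonal operator $\chi_A e^{\beta(\overline{h - h\circ f})}$ has $(x,x)$-entry $1$ for $x\in A_0$ (since $h(x)-h(f(x))=0$ there) and $e^{\beta(h(x)-h(f(x)))}$ for $x\in A_1$; as $A_1$ is finite, the latter contribution is a finite-rank diagonal operator in $c_0(X_0)$, which $\varphi$ again kills. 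Thus both sides equal $\varphi(\chi_{A_0})$, and Theorem \ref{ThmSufficientKMS} completes the argument.

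I do not anticipate a real obstacle: the sole nontrivial input is the finiteness of $A_1$, which is immediate from the defining property of the coarse disjoint union of singletons, and the remainder is bookkeeping with the conditional expectation together with the vanishing of $\varphi$ on $c_0(X_0)$. The one subtlety requiring care is to apply the identity from Theorem \ref{ThmSufficientKMS} to an \emph{arbitrary} partial translation rather than only to one moving every point, so that the fixed-point part $A_0$ is correctly matched on both sides.
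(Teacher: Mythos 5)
Your proposal is correct and follows essentially the same route as the paper: both reduce to the identity of Theorem \ref{ThmSufficientKMS}, split the domain of a partial translation into its (cofinite) fixed-point part and its finite moved part, and use $\varphi\restriction c_0(X_0)=0$ to collapse both sides to the value on the fixed-point part. The only difference is notational (your $A_0,A_1$ versus the paper's $A_1,A_2$).
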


\begin{proof}
   Let $f\colon A\subseteq X_0\to B\subseteq X_0$ be a partial translation. Then, there must be a partition $A=A_1\sqcup A_2$ such that  $f(x)=x$ for all $x\in A_1$ and $|A_2|<\infty$.   As $\varphi\restriction c_0(X_0)=0$,   we have that
   \[\varphi(\chi_{f(A)})=\varphi(\chi_{f(  A_1)}+\chi_{f( A_2)})=\varphi(\chi_{f(  A_1)})=\varphi(\chi_{  A_1}).\]
  Similarly, we have  
   \[\varphi\Big(\chi_Ae^{\beta(\overline{ h-h\circ f})}\Big)=\varphi\Big(\chi_{  A_1}  e^{\beta(\overline{ h-h\circ f})}\Big)=\varphi(\chi_{  A_1}).\]
The result then follows from  Theorem \ref{ThmSufficientKMS}.
\end{proof}

\begin{remark}
Here is   a  more conceptual way of obtaining Proposition \ref{PropCoarseDisjUnionGEN}: notice that since $\cstu(X_0)=\ell_\infty(X_0)+\cK(\ell_2(X_0))$, we must have  $\roeq(X_0)\cong \ell_\infty/c_0$; so, $\roeq(X_0)$ is abelian. Moreover, as $\sigma_h$ is the identity on $\ell_\infty(X_0)$, the flow $\sigma_h^\infty$ induced by $\sigma_h$ on $\roeq(X_0)$ is trivial (see Section \ref{SectionIntro} for the definition of $\sigma_h^\infty$). In particular, any state on $\roeq(X_0)$ is KMS for any $\beta$. The result is then a corollary of Proposition \ref{PropRoeqQStatesAffineIsoINTRO}.
\end{remark}

We now restrict our study of KMS states on $X_0$ to a specific map $h$. This will allow us to find all KMS states on $\cstu(X_0)$ for   the corresponding flow. For the sake of generality, we first isolate a result which does not depend on $X$ being the coarse disjoint union of singletons per se.

\begin{corollary}\label{CorhIsLog}
Let $d$ be any u.l.f.\ metric on $\N$ for which   the map   $h(x)=\log(x)$ is coarse and let $\beta\in \R$. If $\varphi$ is a  strongly continuous $(\sigma_{h},\beta)$-KMS state on $\cstu(\N,d)$, then $\beta>1$ and   
\begin{equation}\label{EqProphIsLog}\varphi([a_{x,y}])=\frac{1}{\sum_{n=1}^\infty \frac{1}{n^\beta}}\sum_{n=1}^\infty \frac{a_{x,x}}{n^\beta},
\end{equation}
for all $[a_{x,y}]\in \cstu(\N,d)$.
\end{corollary}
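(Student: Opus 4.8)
The plan is to apply Theorem~\ref{ThmStrongContKMSINTRO} directly, since it completely characterizes strongly continuous $(\sigma_h,\beta)$-KMS states via the partition function $Z(\beta)$. The whole content of the corollary is specializing that general result to the concrete map $h(x)=\log(x)$ on $\N$. First I would observe that by Theorem~\ref{ThmStrongContKMSINTRO}, a strongly continuous $(\sigma_h,\beta)$-KMS state exists if and only if
\[
Z(\beta)=\sum_{x\in X}e^{-\beta h(x)}<\infty,
\]
so I need only compute this series for the given $h$.

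The key computation is that $e^{-\beta h(x)}=e^{-\beta\log(x)}=x^{-\beta}=1/x^\beta$, so that
\[
Z(\beta)=\sum_{n=1}^\infty \frac{1}{n^\beta}.
\]
This is the $p$-series (Riemann zeta at $\beta$), which converges if and only if $\beta>1$. Hence a strongly continuous KMS state exists precisely when $\beta>1$, giving the first claim. For the formula, I would simply substitute into the explicit expression provided by Theorem~\ref{ThmStrongContKMSINTRO}: whenever a strongly continuous state exists, it is unique and given by
\[
\varphi(a)=\frac{1}{Z(\beta)}\sum_{x\in X}a_{x,x}e^{-\beta h(x)},
\]
which under $h(x)=\log(x)$ becomes exactly the displayed formula \eqref{EqProphIsLog}. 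So the corollary is a near-immediate consequence.

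I do not expect any genuine obstacle here, since the heavy lifting is already done by Theorem~\ref{ThmStrongContKMSINTRO}; the only care needed is the elementary convergence analysis of the $p$-series and the bookkeeping that the hypothesis guarantees $h$ is coarse (so that $\sigma_h$ is indeed a flow and the earlier theorem applies). The one subtlety worth flagging is that the corollary is stated for an \emph{arbitrary} u.l.f.\ metric $d$ on $\N$ making $h=\log$ coarse, rather than specifically for $X_0$; but since Theorem~\ref{ThmStrongContKMSINTRO} holds for any u.l.f.\ metric space and any coarse $h$, this added generality costs nothing and the argument goes through verbatim. Thus the proof will consist of citing Theorem~\ref{ThmStrongContKMSINTRO}, computing $Z(\beta)=\sum_n n^{-\beta}$, and noting its convergence threshold is $\beta>1$.
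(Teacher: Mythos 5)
Your proposal is correct and matches the paper's proof, which simply declares the corollary a straightforward consequence of Theorem \ref{ThmStrongContKMSINTRO}; the substitution $e^{-\beta\log(x)}=x^{-\beta}$ and the $p$-series convergence threshold $\beta>1$ are exactly the intended content. No further comment is needed.
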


\begin{proof}
This is a straightforward consequence of Theorem \ref{ThmStrongContKMSINTRO}.
\end{proof}

We can now describe the KMS states on $X_0$ completely with $h=\log$. Precisely:

\begin{corollary}\label{CorCoarseDisLog}
Let $X_0=\{n^2\mid n\in \N\}$ be the coarse disjoint union of singletons described above,  $\beta\in \R$, and $h\colon X\to \R$ be given by  $h(x)=\log(\sqrt{x})$ for all $x\in X_0$. The $(\sigma_{h},\beta)$-KMS states of $\cstu(X_0)$ are precisely the following:
\begin{enumerate}
    \item Any state on $\cstu(X_0)$ which vanishes on $c_0(X_0)$,
    \item If $\beta>1$, then  $\cstu(X_0)$ has a unique strongly continuous  $(\sigma_{h},\beta)$-KMS state and this state is given by
\[\varphi([a_{x,y}])=\frac{1}{\sum_{n=1}^\infty \frac{1}{n^\beta}}\sum_{n=1}^\infty \frac{a_{n^2,n^2}}{n^\beta},\]
for all $[a_{x,y}]\in \cstu(X_0)$, and 
\item for $\beta>1$,  any convex combination of the states above.
\end{enumerate}   
\end{corollary}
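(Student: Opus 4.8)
The plan is to prove the two inclusions separately: first that every state listed in (1)--(3) is a $(\sigma_h,\beta)$-KMS state, and then that every $(\sigma_h,\beta)$-KMS state is of this form. For the easy inclusion, the state in (2) is KMS by Theorem \ref{ThmStrongContKMSINTRO}, where $Z(\beta)=\sum_{x\in X_0}e^{-\beta h(x)}=\sum_n n^{-\beta}$ is finite exactly when $\beta>1$ (recall $h(n^2)=\log(\sqrt{n^2})=\log n$), and (3) is immediate since the KMS condition is affine in the state. For (1), I would first record the elementary fact that a state $\varphi$ on $\cstu(X_0)$ with $\varphi\restriction c_0(X_0)=0$ automatically vanishes on all of $\cK(\ell_2(X_0))$: since $\varphi(e_{x,x})=0$ for every $x$, Cauchy--Schwarz gives $|\varphi(e_{x,y})|^2\le \varphi(1)\varphi(e_{y,y})=0$, so $\varphi$ kills every matrix unit, hence every finite-rank operator, and by norm-continuity every compact operator. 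Consequently $\varphi=(\varphi\restriction\ell_\infty(X_0))\circ E$ with $\varphi\restriction\ell_\infty(X_0)$ a $c_0$-vanishing state, and Proposition \ref{PropCoarseDisjUnionGEN} shows $\varphi$ is KMS.

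For the converse, let $\varphi$ be an arbitrary $(\sigma_h,\beta)$-KMS state. By Theorem \ref{ThmFactorsCondExpINTRO} we have $\varphi=\varphi\circ E$, so $\varphi$ is determined by the state $\mu=\varphi\restriction\ell_\infty(X_0)$. Applying the KMS condition to the one-point partial translations $\{x_0\}\to\{x\}$ (every such map is a partial translation on $X_0$), exactly as in Theorem \ref{ThmSufficientKMS} and the proof of Theorem \ref{ThmStrongContKMSINTRO}, I obtain the constraint
\[\varphi(e_{x,x})=c\,e^{-\beta h(x)}=c\,n^{-\beta}\qquad(x=n^2)\]
for a single constant $c\ge 0$. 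Summing over a finite set $F$ and using $\varphi(\chi_{X_0})=\varphi(1)=1$ gives $c\sum_n n^{-\beta}\le 1$.

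Now I would split on $\beta$. If $\beta\le 1$ then $\sum_n n^{-\beta}=\infty$, forcing $c=0$; hence $\varphi(e_{x,x})=0$ for all $x$, so $\mu$ vanishes on $c_0(X_0)$ and $\varphi$ is of type (1) (this recovers Proposition \ref{PropBetaNegSingletons} for $\beta<0$). If $\beta>1$, I decompose $\mu$ into its atomic and residual parts: put $p_x=\mu(\{x\})=c\,n^{-\beta}$, $t=\sum_x p_x=c\sum_n n^{-\beta}\in[0,1]$, and $\mu_{\mathrm{at}}(a)=\sum_x p_x a_x$. A standard domination argument ($\mu(a)\ge\mu(a\chi_F)$ for $a\ge 0$ and finite $F$, then supremum over $F$) shows $\mu\ge\mu_{\mathrm{at}}$, so $\mu_{\mathrm{res}}:=\mu-\mu_{\mathrm{at}}$ is a positive functional of mass $1-t$ vanishing on $c_0(X_0)$. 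Writing $\mu=t\,\mu_{\mathrm{gibbs}}+(1-t)\,\mu_\infty$ with $\mu_{\mathrm{gibbs}}=t^{-1}\mu_{\mathrm{at}}$ and $\mu_\infty=(1-t)^{-1}\mu_{\mathrm{res}}$, and composing with $E$, yields $\varphi=t\,\varphi_{\mathrm{gibbs}}+(1-t)\,\varphi_\infty$, where $\varphi_{\mathrm{gibbs}}$ is precisely the strongly continuous state of (2) (its diagonal masses are $n^{-\beta}/\sum_m m^{-\beta}$) and $\varphi_\infty$ is a $c_0$-vanishing state of type (1). Thus $\varphi$ is a convex combination as in (3), with the extreme cases $t=1$ and $t=0$ recovering (2) and (1).

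I expect the main obstacle to be the bookkeeping in the converse: verifying that the atomic part of $\mu$ is dominated by $\mu$, so that the residual part is genuinely positive and normalizes to a state, and checking that the normalized atomic part is exactly the Gibbs state of (2) rather than merely agreeing with it on the diagonal. The short identity ``$c_0$-vanishing $\Rightarrow$ $\cK$-vanishing'' is also essential, since it is what makes family (1) internally consistent with the fact (Theorem \ref{ThmFactorsCondExpINTRO}) that all KMS states factor through $E$.
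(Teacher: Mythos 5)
Your proof is correct and follows essentially the same route as the paper: the paper's one-line proof cites Proposition \ref{PropCoarseDisjUnionGEN} and Corollary \ref{CorhIsLog}, with the completeness of the list resting implicitly on Theorem \ref{ThmFactorsCondExpINTRO} and the decomposition of Proposition \ref{PropKMSStateVanishComp}, and your atomic/residual splitting of $\mu$ is precisely a hands-on re-derivation of that proposition for $X_0$. The only extra content you supply --- the Cauchy--Schwarz step showing that vanishing on $c_0(X_0)$ forces vanishing on $\cK(\ell_2(X_0))$, and the domination argument $\mu\ge\mu_{\mathrm{at}}$ --- is exactly the bookkeeping the paper leaves to the reader, and it is carried out correctly.
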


\begin{proof}
 This follows immediately from Propositions \ref{PropCoarseDisjUnionGEN} and  Corollary \ref{CorhIsLog}.
\end{proof}

\section{Intermission}\label{SectionIntermission}

As seen in Theorem \ref{ThmStrongContKMSINTRO}, strongly continuous KMS states on uniform Roe algebras are completely understood; so we are left to understand the  strongly discontinuous case. In this section, before  explicitly perusing this goal,  we take a short break from uniform Roe algebras per se, and present some results about KMS states on arbitrary \cstar-algebras with respect to arbitrary flows. The technical results herein will be essential in the analysis to follow of KMS states on uniform Roe algebras which are   strongly discontinuous.

We start by properly stating the settings of this section. But firstly, we recall some standard notation:  if $A$ is a \cstar-algebra, then $\cZ(A)$ denotes the \emph{center of $A$}, i.e., 
\[\cZ(A)=\{b\in A\mid ab=ba\}.\]
Moreover, if $K$ is a   compact Hausdorff space, then $C(K)$ denotes the \cstar-algebra of all continuous functions $K\to \C$.

\begin{assumption}\label{Assumption}
Throughout this section, we fix a unital \cstar-algebra    $A$,  a flow $\sigma$  on $A$, and $\beta\in \R$. Moreover, we fix a  unital \cstar-subalgebra $C\subseteq A$ contained in $\cZ(A)$, and identify $C$ with $ C(\Omega(C))$ via the Gelfand transform; here $\Omega(C)$ denotes  the spectrum of $C$.
\end{assumption}

\begin{proposition}\label{PropCutByHInCenter}
In the setting of Assumption \ref{Assumption}: If $\varphi$ is a $(\sigma,\beta)$-KMS state on $A$ and   $c\in A$ is a positive element in the center of $A$ with $\varphi(c)\neq 0$, then 
the state $\varphi_c$ on $A$ defined by \[\varphi_c(a)=\frac{\varphi(ac)}{\varphi(c)}, \text{ for all } a\in A,\]
is a $(\sigma,\beta)$-KMS state on $A$.
\end{proposition}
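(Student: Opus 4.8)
The plan is to verify the two defining properties of a KMS state for $\varphi_c$ separately: first that $\varphi_c$ is a genuine state, and then that it satisfies the $(\sigma,\beta)$-KMS condition. Unitality is immediate, since $\varphi_c(1)=\varphi(c)/\varphi(c)=1$, and the denominator is legitimate because $c\geq 0$ together with $\varphi(c)\neq 0$ and the positivity of $\varphi$ force $\varphi(c)>0$. For positivity of $\varphi_c$, the key observation is that the positive central element $c$ has a square root $c^{1/2}$ which is again central: indeed $c^{1/2}$ lies in the norm closure of the algebra generated by $c$, hence commutes with everything that $c$ commutes with, in particular with all of $A$. Then for any $a\in A$ one rewrites $a^*ac=c^{1/2}a^*ac^{1/2}=(ac^{1/2})^*(ac^{1/2})$, using centrality of $c^{1/2}$, so that $\varphi(a^*ac)=\varphi\big((ac^{1/2})^*(ac^{1/2})\big)\geq 0$ by positivity of $\varphi$. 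Dividing by $\varphi(c)>0$ shows $\varphi_c(a^*a)\geq 0$, so $\varphi_c$ is a state.

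For the KMS condition, fix $a\in A$ and an analytic element $b\in A$; I must show $\varphi_c(a\sigma_{i\beta}(b))=\varphi_c(ba)$. Unwinding the definition, and clearing the common factor $\varphi(c)$, this amounts to proving $\varphi(a\sigma_{i\beta}(b)c)=\varphi(bac)$. The central idea is that $c$ commutes with every element of $A$, in particular with $\sigma_{i\beta}(b)\in A$, so that $a\sigma_{i\beta}(b)c=ac\,\sigma_{i\beta}(b)$. Now I apply the $(\sigma,\beta)$-KMS condition for $\varphi$ itself, but with the element $ac\in A$ playing the role of the first variable and $b$ still the analytic element: this gives $\varphi\big((ac)\sigma_{i\beta}(b)\big)=\varphi\big(b(ac)\big)=\varphi(bac)$. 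Combining the two identities yields $\varphi(a\sigma_{i\beta}(b)c)=\varphi(bac)$, which is exactly the required equality.

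Because the manipulations are short, there is no serious obstacle; the only two points requiring (minor) care are that $c^{1/2}$ is central—needed to split $a^*ac$ into the manifestly positive $(ac^{1/2})^*(ac^{1/2})$—and that centrality of $c$ lets one slide $c$ past $\sigma_{i\beta}(b)$ before invoking the KMS condition of $\varphi$ with the rewritten first variable $ac$. Both are immediate consequences of $c\in\cZ(A)$, so the proposition follows.
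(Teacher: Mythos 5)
Your proof is correct and follows essentially the same route as the paper: use centrality of $c$ to see that $\varphi_c$ is a state, then slide $c$ past $\sigma_{i\beta}(b)$ and apply the KMS condition for $\varphi$ with $ac$ in the first slot. The only difference is cosmetic—you verify positivity via the central square root $c^{1/2}$ and the identity $a^*ac=(ac^{1/2})^*(ac^{1/2})$, whereas the paper simply notes that $ac$ is positive whenever $a$ is positive and $c$ is positive central; both amount to the same observation.
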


\begin{proof}
First notice that, as $c\in \cZ(A)$, then $ac$ is positive for all positive $a\in A$. Therefore, $\varphi_c$ is indeed a state. Given $a,b\in A$, with $b$ analytic, we have 
\[\varphi_c(a\sigma_{i\beta}(b))=\frac{\varphi(a\sigma_{i\beta}(b)c)}{\varphi(c)}=\frac{\varphi(ac\sigma_{i\beta}(b))}{\varphi(c)}=\frac{\varphi(bac)}{\varphi(c)}=\varphi_h(ba).\]
So, $\varphi_c$ is a $(\sigma,\beta)$-KMS  state on $A$. 
\end{proof}

\begin{proposition}\label{PropRestrictCenterVanishes}
In the setting of Assumption \ref{Assumption}: If $\varphi$ is an extreme $(\sigma,\beta)$-KMS state on $A$, then there is $x\in \Omega(C)$ such that 
\[\varphi(a)=a(x)\ \text{ for all }\ a\in C=C(\Omega(C)).\]
In particular, letting \[J_x=\{a\in C(\Omega(C))\mid a(x)=0\},\] we have that    $\varphi \restriction J_x=0$.
\end{proposition}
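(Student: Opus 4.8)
The plan is to show that the restriction $\varphi\rs C$ is a \emph{character} on $C$, i.e.\ a multiplicative state; once this is established, the identification $C=C(\Omega(C))$ via the Gelfand transform immediately produces a point $x\in\Omega(C)$ with $\varphi(a)=a(x)$ for all $a\in C$, since the characters of $C(\Omega(C))$ are exactly the point evaluations. The final ``in particular'' assertion is then trivial: for $a\in J_x$ we have $\varphi(a)=a(x)=0$. Thus the entire content of the proposition reduces to proving that $\varphi(ac)=\varphi(a)\varphi(c)$ for all $a,c\in C$; I will in fact prove the stronger statement that $\varphi(ac)=\varphi(a)\varphi(c)$ for \emph{all} $a\in A$ and all $c\in C$, which is cleaner to work with because it lets me feed arbitrary elements of $A$ through the cutdown construction.

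The main idea is to exploit extremality through Proposition \ref{PropCutByHInCenter}. Fix a positive $c\in C$ with $0\le c\le 1$ and $0<\varphi(c)<1$. Since $C\subseteq\cZ(A)$ and $1\in C$, both $c$ and $1-c$ are positive central elements with nonzero $\varphi$-value, so Proposition \ref{PropCutByHInCenter} guarantees that $\varphi_c$ and $\varphi_{1-c}$ are both $(\sigma,\beta)$-KMS states. A direct computation gives, for every $a\in A$,
\[
\varphi(c)\,\varphi_c(a)+\varphi(1-c)\,\varphi_{1-c}(a)=\varphi(ac)+\varphi\big(a(1-c)\big)=\varphi(a),
\]
and the weights $\varphi(c),\varphi(1-c)$ are strictly positive and sum to $\varphi(1)=1$. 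Hence $\varphi$ is a genuine proper convex combination of two $(\sigma,\beta)$-KMS states, so extremality of $\varphi$ forces $\varphi_c=\varphi$. Unwinding the definition of $\varphi_c$, this reads $\varphi(ac)=\varphi(c)\varphi(a)$ for all $a\in A$.

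It then remains to remove the restriction $0<\varphi(c)<1$ and pass from positive $c$ to all of $C$. The two degenerate cases are handled by Cauchy--Schwarz for the state $\varphi$: if $c\ge 0$ and $\varphi(c)=0$, then $|\varphi(ac)|^2=|\varphi(ac^{1/2}\cdot c^{1/2})|^2\le\varphi\big(ac^{1/2}c^{1/2}a^*\big)\,\varphi(c)=0$, so $\varphi(ac)=0=\varphi(a)\varphi(c)$; the case $\varphi(c)=1$ follows by applying this to $1-c$. Thus $\varphi(ac)=\varphi(a)\varphi(c)$ holds for every positive $c\in C$ with $0\le c\le 1$, and by scaling and the fact that $C$ is spanned by such elements, the identity extends by linearity to all $c\in C$ and all $a\in A$. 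Specializing to $a\in C$ shows $\varphi\rs C$ is multiplicative, completing the argument. I expect the genuinely load-bearing step to be the convex decomposition together with the invocation of extremality; the edge cases and the Gelfand-theoretic conclusion are routine, and the only subtlety to watch is confirming that the cutdowns $\varphi_c,\varphi_{1-c}$ really are KMS (which is exactly what Proposition \ref{PropCutByHInCenter} supplies, using centrality of $C$).
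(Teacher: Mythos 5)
Your proof is correct, and it runs on the same engine as the paper's --- Proposition \ref{PropCutByHInCenter} applied to a central positive contraction $c$ and to $1-c$, followed by the convex decomposition $\varphi=\varphi(c)\varphi_c+\varphi(1-c)\varphi_{1-c}$ and extremality --- but you extract the conclusion differently. The paper first invokes the Riesz representation theorem to realize $\varphi\restriction C$ as a probability measure on $\Omega(C)$, and then applies the cutdown argument only to a specific Urysohn function $k$ separating two hypothetical points of the support, deriving a contradiction from $\varphi_k\neq\varphi_\ell$ (a point it treats rather tersely, since one must still check the two cutdowns actually differ). You instead run the cutdown argument for \emph{every} positive contraction $c\in C$ and read off $\varphi_c=\varphi$ directly, i.e.\ $\varphi(ac)=\varphi(a)\varphi(c)$; this proves outright that $\varphi\restriction C$ is a character, with no measure theory and no need to exhibit two distinct cutdowns. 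Your route is arguably cleaner and yields the slightly stronger statement $\varphi(ac)=\varphi(a)\varphi(c)$ for all $a\in A$ and $c\in C$ (not merely for $a\in C$), at the modest cost of handling the degenerate cases $\varphi(c)\in\{0,1\}$, which your Cauchy--Schwarz argument disposes of correctly. The only hypothesis both proofs silently rely on is that $C$ contains the unit of $A$, so that $1-c$ is again a positive element of $C$ and $\varphi(c)+\varphi(1-c)=1$; this is what the paper's Assumption \ref{Assumption} intends by ``unital \cstar-subalgebra.''
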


\begin{proof}
By Riesz representation theorem, there is a probability measure $\mu$ on $\Omega(C)$ such that 
\[\varphi(a)=\int_{\Omega(C)}ad\mu\ \text{ for all } a\in C.\]
Let $K\subseteq \Omega(C)$ be the support of $\mu$.  Let us show that $K$ is a singleton. In order to prove this, suppose by contradiction that there are two distinct points $x,y\in K$. By Urysohn's lemma, we can pick a positive  $k\in C(\Omega(C))$ with $\|k\|\leq 1$ and such that $k(x)=1$ and $k(y)=0$. Setting $\ell=1-k$, we have that both $k$ and $\ell$ are not identically zero on $K$, so both $\varphi(k)$ and $\varphi(\ell)$ are nonzero.  By Proposition \ref{PropCutByHInCenter}, $\varphi_k$ and $\varphi_\ell$ are $(\sigma,\beta)$-KMS states on $A$, and it is clear that 
\[\varphi=\lambda\varphi_k+(1-\lambda)\varphi_\ell,\]
where $\lambda=\varphi(k)$. Since  $\varphi_k\neq \varphi_\ell$, this contradicts the assumption that $\varphi$ is an extreme $(\sigma,\beta)$-KMS state. So, $K$ contains only one point, say $K=\{x\}$. Therefore, $\mu$ must be the dirac measure on $\{x\}$, which gives that 
\[\varphi(a)=a(x), \text{ for all }a\in C(\Omega(C)).\]

The last claim follows straightforwardly from the above. 
\end{proof}

\begin{definition}
In the setting of Assumption \ref{Assumption}: 
\begin{enumerate}
\item We denote the set of all $(\sigma,\beta)$-KMS states on $A$ by $K_\beta$. 
\item    For each $x\in \Omega(C)$, let 
\[J_x=\{a\in C=C(\Omega(C))\mid a(x)=0\} \text{ and }K^x_\beta=\{\varphi\in K_\beta\mid \varphi\restriction J_x=0\}.\]
\end{enumerate}
\end{definition}

 It is plainly clear that each $K^x_\beta$ is a weak$^*$-closed convex subset of $K_\beta$.

\begin{proposition}\label{PropKextremeSubset}
In the setting of Assumption \ref{Assumption}: For all $x\in \Omega(C)$, one has that $K^x_\beta$ is an extreme subset of $K_\beta$.
\end{proposition}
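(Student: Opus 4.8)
The goal is to show that $K^x_\beta$ is an extreme subset of $K_\beta$; that is, whenever a state $\varphi \in K^x_\beta$ is written as a proper convex combination $\varphi = \lambda \varphi_0 + (1-\lambda)\varphi_1$ with $\varphi_0, \varphi_1 \in K_\beta$ and $0 < \lambda < 1$, both $\varphi_0$ and $\varphi_1$ must already lie in $K^x_\beta$. Unwinding the definition, what I must establish is that $\varphi_0 \restriction J_x = 0$ and $\varphi_1 \restriction J_x = 0$, given only that $\varphi \restriction J_x = 0$.

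The key observation is that every element of $J_x$ is a (complex) linear combination of \emph{positive} elements of $J_x$, so it suffices to handle positive $a \in J_x$. First I would fix a positive $a \in C = C(\Omega(C))$ with $a(x) = 0$ and use positivity of the states involved. Since $\varphi_0$ and $\varphi_1$ are states, hence positive functionals, and $a \geq 0$, we have $\varphi_0(a) \geq 0$ and $\varphi_1(a) \geq 0$. The hypothesis gives
\[
0 = \varphi(a) = \lambda \varphi_0(a) + (1-\lambda)\varphi_1(a),
\]
a sum of two nonnegative terms with strictly positive coefficients $\lambda$ and $1-\lambda$. Hence both terms vanish, forcing $\varphi_0(a) = 0$ and $\varphi_1(a) = 0$.

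It then remains to pass from positive elements of $J_x$ to all of $J_x$. Here I would note that $J_x$ is a $C^*$-subalgebra (in fact the maximal ideal of $C(\Omega(C))$ corresponding to $x$), so any $a \in J_x$ decomposes as $a = (a_1 - a_2) + i(a_3 - a_4)$ with each $a_j$ a positive element of $J_x$: indeed, writing $a$ via its real and imaginary parts and then taking positive and negative parts of each self-adjoint piece, one checks that each resulting positive element still vanishes at $x$ since $a(x) = 0$ means both $\operatorname{Re}(a)(x) = 0$ and $\operatorname{Im}(a)(x) = 0$, and the positive/negative parts of a real-valued continuous function vanishing at $x$ also vanish at $x$. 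By linearity of $\varphi_0$ and $\varphi_1$, the vanishing on positive elements propagates to all of $J_x$, giving $\varphi_0 \restriction J_x = 0$ and $\varphi_1 \restriction J_x = 0$, as desired.

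I do not anticipate a serious obstacle: the argument is essentially the standard fact that the zero set $\{\psi : \psi\restriction J_x = 0\}$ of a \emph{positive} functional is determined by a single positivity constraint, and positive convex combinations detect vanishing on positive elements coordinatewise. The only point requiring a moment's care is the reduction to positive elements, which I would dispatch by the decomposition above; everything else is immediate from positivity of states and the strict positivity of the convex coefficients.
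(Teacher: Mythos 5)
Your proof is correct, but it takes a different (and in fact more elementary) route than the paper. The paper restricts $\varphi$, $\varphi_0$, $\varphi_1$ to $C$ and observes that the restriction $\psi=\varphi\restriction C$ is a state on $C(\Omega(C))$ annihilating the maximal ideal $J_x$, hence is the character of evaluation at $x$; since characters are extreme points of the unit ball of the dual of $C$, the identity $\psi=\lambda\psi_0+(1-\lambda)\psi_1$ forces $\psi_0=\psi_1=\psi$, and in particular both vanish on $J_x$. You instead argue directly from positivity: for positive $a\in J_x$ the equation $0=\lambda\varphi_0(a)+(1-\lambda)\varphi_1(a)$ with nonnegative summands and strictly positive weights forces each term to vanish, and then you span $J_x$ by its positive elements (real/imaginary parts followed by positive/negative parts, all of which still vanish at $x$). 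Both arguments are complete; yours avoids invoking the extremality of characters and really proves the more general statement that, for any self-adjoint subspace spanned by its positive elements, the set of states annihilating it is an extreme subset of any convex set of states, whereas the paper's argument exploits the specific fact that $J_x$ has codimension one in $C$, so that the restricted state is pinned down entirely. Neither approach uses the KMS condition at any point beyond membership in $K_\beta$, so the two proofs are interchangeable here.
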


\begin{proof}
Pick $\varphi\in K^x_\beta$ and assume that 
\[\varphi=\lambda\varphi_1+(1-\lambda)\varphi_2,\]
where $\varphi_1,\varphi_2\in K_\beta$ and $\lambda\in (0,1)$. Denoting by $\psi$, $\psi_1$, and $\psi_2$ the restrictions of $\varphi$, $\varphi_1$, and $\varphi_2$ to $C$, respectively, it is apparent that 
\[\psi=\lambda\psi_1+(1-\lambda)\psi_2.\]
By Proposition \ref{PropRestrictCenterVanishes}, $\psi$ is a character of $C=C(\Omega(C))$. Hence, $\psi$ is an extreme point of the unit ball of the dual of $C$. This shows that $\psi=\psi_1=\psi_2$, which in turn implies that both $\varphi_1$ and $\varphi_2$ vanish on $J_x$. Therefore, $\varphi_1,\varphi_2\in K^x_\beta$ as desired. 
\end{proof}

We can now present the main result of this section. In it, $\mathrm{Ext}(K_\beta)$ (resp. $\mathrm{Ext}(K_\beta^x)$) denotes the subset of all extreme elements of $K_\beta$ (resp. $\mathrm{Ext}(K_\beta^x)$).

\begin{theorem}\label{ThmMainIntermission}
In the setting of Assumption \ref{Assumption}: We have
\[\mathrm{Ext}(K_\beta)=\bigsqcup_{x\in \Omega(C)}\mathrm{Ext}(K^x_\beta).\] Moreover, if there is a $(\sigma,\beta)$-KMS state on $A$ whose restriction to $C$ is faithful, then $K^x_\beta\neq \emptyset$ for all $x\in \Omega(C)$.  In particular, if such KMS state exists, we have that 
\[|\mathrm{Ext}(K_\beta)|\geq |\Omega(C)|. \]
\end{theorem}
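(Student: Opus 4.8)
The plan is to prove the three assertions of Theorem~\ref{ThmMainIntermission} in order, relying principally on Propositions~\ref{PropRestrictCenterVanishes}, \ref{PropCutByHInCenter}, and \ref{PropKextremeSubset}. For the decomposition $\mathrm{Ext}(K_\beta)=\bigsqcup_{x\in\Omega(C)}\mathrm{Ext}(K^x_\beta)$, I would first argue that the union is \emph{disjoint}: if $\varphi\in K^x_\beta\cap K^y_\beta$ with $x\neq y$, then by Urysohn's lemma I can choose $a\in C=C(\Omega(C))$ with $a(x)=0$ and $a(y)=1$, so that $a\in J_x$ but $a\notin J_y$; faithfulness considerations aside, membership in $K^x_\beta$ forces $\varphi(a)=0$ while $\varphi\restriction C$ is the character $\mathrm{ev}_y$ (by Proposition~\ref{PropRestrictCenterVanishes} applied to the extreme state $\varphi$), giving $\varphi(a)=a(y)=1$, a contradiction. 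Thus each extreme $\varphi$ lies in at most one $K^x_\beta$.

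Next I establish the two inclusions defining the set equality. For $\subseteq$: given $\varphi\in\mathrm{Ext}(K_\beta)$, Proposition~\ref{PropRestrictCenterVanishes} supplies an $x\in\Omega(C)$ with $\varphi\restriction J_x=0$, so $\varphi\in K^x_\beta$; since $\varphi$ is extreme in the larger set $K_\beta$ and $K^x_\beta\subseteq K_\beta$, it is a fortiori extreme in $K^x_\beta$, whence $\varphi\in\mathrm{Ext}(K^x_\beta)$. For $\supseteq$: suppose $\varphi\in\mathrm{Ext}(K^x_\beta)$ for some $x$, and write $\varphi=\lambda\varphi_1+(1-\lambda)\varphi_2$ as a proper convex combination in $K_\beta$. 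Here is where Proposition~\ref{PropKextremeSubset} does the work: because $K^x_\beta$ is an \emph{extreme subset} of $K_\beta$, the decomposition of $\varphi\in K^x_\beta$ forces both $\varphi_1,\varphi_2\in K^x_\beta$; extremality of $\varphi$ within $K^x_\beta$ then yields $\varphi_1=\varphi_2=\varphi$, so $\varphi\in\mathrm{Ext}(K_\beta)$.

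For the second assertion, assume there is a $(\sigma,\beta)$-KMS state $\varphi_0$ on $A$ whose restriction to $C$ is faithful, and fix $x\in\Omega(C)$. The goal is to produce a nonzero element of $K^x_\beta$. The natural attempt is to cut $\varphi_0$ by elements of $C$ concentrated near $x$ and pass to a limit. Concretely, I would choose a net $(c_U)$ of positive functions in $C(\Omega(C))$ indexed by decreasing neighborhoods $U$ of $x$, each with $c_U(x)=1$ and $\mathrm{supp}(c_U)\subseteq U$; faithfulness of $\varphi_0\restriction C$ guarantees $\varphi_0(c_U)\neq 0$, so Proposition~\ref{PropCutByHInCenter} gives KMS states $(\varphi_0)_{c_U}\in K_\beta$. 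By weak$^*$-compactness of the state space, this net has a cluster point $\psi\in K_\beta$ (the set $K_\beta$ is weak$^*$-closed, being cut out by the KMS equations). The key computation is to verify $\psi\restriction J_x=0$: for $a\in J_x$, i.e.\ $a(x)=0$, one estimates $(\varphi_0)_{c_U}(a)=\varphi_0(ac_U)/\varphi_0(c_U)$ and shows this tends to $0$ as $U$ shrinks, using that $a$ is continuous with $a(x)=0$ so $\|ac_U\|\to 0$ while $\varphi_0(c_U)$ stays bounded below along a suitable subnet. This places $\psi\in K^x_\beta$, proving $K^x_\beta\neq\emptyset$.

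Finally, the cardinality bound $|\mathrm{Ext}(K_\beta)|\geq|\Omega(C)|$ follows formally: each nonempty weak$^*$-compact convex set $K^x_\beta$ has, by the Krein--Milman theorem, at least one extreme point, so $\mathrm{Ext}(K^x_\beta)\neq\emptyset$ for every $x\in\Omega(C)$; combined with the disjointness established in the first assertion, the map sending each $x$ to some chosen element of $\mathrm{Ext}(K^x_\beta)\subseteq\mathrm{Ext}(K_\beta)$ is injective, giving the inequality. The main obstacle I anticipate is the limit argument in the second assertion---specifically, justifying that the cluster point $\psi$ genuinely vanishes on $J_x$ and is a bona fide state (not the zero functional), which requires controlling $\varphi_0(c_U)$ away from zero; faithfulness of $\varphi_0\restriction C$ is exactly the hypothesis that prevents degeneration, and making the neighborhood/net bookkeeping precise is the delicate point.
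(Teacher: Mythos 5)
Your overall route is the same as the paper's: Proposition \ref{PropRestrictCenterVanishes} gives the inclusion $\subseteq$, Proposition \ref{PropKextremeSubset} gives $\supseteq$, the cutting construction of Proposition \ref{PropCutByHInCenter} followed by a weak$^*$ cluster point gives the nonemptiness of $K^x_\beta$, and Krein--Milman plus disjointness gives the cardinality bound. The decomposition and cardinality parts are correct as you state them.

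The one step whose justification would fail as written is your argument that the cluster point $\psi$ vanishes on $J_x$. You propose to bound $(\varphi_0)_{c_U}(a)=\varphi_0(ac_U)/\varphi_0(c_U)$ by $\|ac_U\|/\varphi_0(c_U)$ and to argue that ``$\varphi_0(c_U)$ stays bounded below along a suitable subnet.'' That claim is false in general: faithfulness of $\varphi_0\restriction C$ only guarantees $\varphi_0(c_U)>0$ for each individual $U$ (which is exactly what makes the cut states well defined), not a uniform lower bound. If $\varphi_0\restriction C$ corresponds to a non-atomic measure on $\Omega(C)$ --- say $\Omega(C)=[0,1]$ with Lebesgue measure and $x=0$ --- then $\varphi_0(c_U)\to 0$ along the whole net and no subnet is bounded below, so the quotient of norms gives you nothing. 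The correct argument needs no control on the denominator: for positive $a\in J_x$ and $\varepsilon>0$, continuity of $a$ at $x$ gives a neighborhood $U_0$ with $a\leq\varepsilon$ on $U_0$; since $c_U$ vanishes off $U\subseteq U_0$ and is positive, one has $0\leq ac_U\leq\varepsilon c_U$ pointwise in $C(\Omega(C))$, hence $0\leq\varphi_0(ac_U)\leq\varepsilon\,\varphi_0(c_U)$ and $(\varphi_0)_{c_U}(a)\leq\varepsilon$ for all $U\subseteq U_0$. Since $J_x$ is a C*-subalgebra of $C$ and is therefore spanned by its positive elements, this suffices, and the cluster point $\psi$ then vanishes on $J_x$. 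With this domination estimate substituted for your norm estimate, the remainder of your argument goes through and coincides with the paper's proof.
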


 \begin{proof}
By Proposition \ref{PropRestrictCenterVanishes}, every extreme  point $\varphi$  of $K_\beta$ lies in some $K^x_\beta$ and, in this case, $\varphi$ is evidently an extreme point of $K^x_\beta$. Conversely, as each $K^x_\beta$ is an extreme subset of $K_\beta$ (Proposition \ref{PropKextremeSubset}), every extreme point of any $K^x_\beta$ is an extreme point of $K_\beta$.

Suppose now that there is a $(\sigma,\beta)$-KMS state $\varphi$ on $A$ whose restriction to $C$ is faithful. Fix $x\in \Omega(C)$ and let us show $K^x_\beta\neq \emptyset$. Let $\cV$ be the family of all open subsets of $\Omega(C)$ which contain $x$ and, for each  $V\in \cV$, let $h_V\colon \Omega(C)\to [0,1]$ be a continuous function such that $h_V(x)=1$ and $h_V(y)=0$ for all $y\not\in V$. By the faithfulness of $\varphi$, $\varphi(h_V)\neq 0$ for all $V\in \cV$. Therefore, by Proposition \ref{PropCutByHInCenter}, each $\varphi_V=\varphi_{h_V}$ is a $(\sigma,\beta)$-KMS state on $A$.

Consider  $\cV$ as a directed set with the usual reverse containment order. By Banach-Alaoglu theorem,  $K_\beta$ is weak$^*$-compact. Hence,   by passing to a subset if necessary, we can assume that $(\varphi_V)_{V\in \cV}$ converges to some $\psi\in K_\beta$ in the weak$^*$-topology. As $\psi$ is a limit of $(\varphi_V)_{V\in \cV}$ and as $\lim_{V,\cV}\|a h_V\|=0$, for all $a\in J_x$, 
the state $\psi$ must vanish on $J_x$. This shows that $\varphi\in K^x_\beta$ and $K^x_\beta$ cannot be empty as desired.

The last claim is a straightforward consequence of  the above. 
\end{proof}

\section{Factoring KMS states through the uniform Roe corona}\label{SectionURCandHC}

In this section, we return to the setting of uniform Roe algebras and study  strongly discontinuous KMS states (the strongly continuous case was already completely treated in Theorem \ref{ThmStrongContKMSINTRO}). We start noticing that, in order to study such states,  it is enough to study the KMS states  which vanish on the ideal of compact operators. Precisely:

\begin{proposition}\label{PropKMSStateVanishComp}
Let $X$ be a u.l.f.\ metric space, $h\colon X\to \R$ be coarse, and $\beta\in \R$.   Suppose $\varphi$ is a $(\sigma_{h},\beta)$-KMS state on $\cstu(X)$ and define a positive functional $\psi$ on $\cstu(X)$ by letting 
\[\psi(a)=\lim_{F,\cF} \sum_{x\in F}a_{x,x}\varphi(e_{x,x})\ \text{ for all }\ a=[a_{x,y}]\in \cstu(X),\] where   $\cF$ is the net of all  finite subsets of $X$ ordered by reverse inclusion. Then, $\psi$ is well defined and 
\begin{enumerate}
\item $\psi$ is  strongly continuous and satisfies the  $(\sigma_{h},\beta)$-KMS condition, and
\item  $\varphi-\psi$ is a positive functional which satisfies the   $(\sigma_{h},\beta)$-KMS condition and vanishes on $\cK(\ell_2(X))$. 
\end{enumerate}
\end{proposition}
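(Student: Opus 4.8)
The plan is to first establish that $\psi$ is well defined as a positive functional, then verify that it is strongly continuous and satisfies the KMS condition, and finally deduce the corresponding properties for $\varphi - \psi$. For well-definedness, observe that the net of partial sums $\big(\sum_{x\in F}a_{x,x}\varphi(e_{x,x})\big)_{F\in\cF}$ is indeed convergent: since $\varphi$ is a state and each $e_{x,x}$ is a projection, we have $\varphi(e_{x,x})\geq 0$, and the finite sums $\sum_{x\in F}\varphi(e_{x,x}) = \varphi(\chi_F)$ are bounded by $1$. Thus $(\varphi(e_{x,x}))_{x\in X}$ is a summable family of nonnegative reals, and for $a\in\cstu(X)$ with $\|a\|\leq 1$ one has $|a_{x,x}|\leq 1$, so the net of partial sums is Cauchy and converges. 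Positivity of $\psi$ is immediate since $a\geq 0$ forces $a_{x,x}\geq 0$. It is also clear that $\psi = \psi\circ E$, where $E$ is the canonical conditional expectation, and that $\psi$ depends only on the diagonal of $a$.

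For part (1), strong continuity of $\psi$ amounts to showing $\psi(\chi_X) = \sum_{x\in X}\varphi(e_{x,x})$, i.e.\ that $\psi$ is given by the \emph{full} (SOT-convergent) sum of its diagonal contributions with no defect. This follows directly from the construction, since $\psi(\chi_X)=\lim_{F,\cF}\varphi(\chi_F)=\sum_{x\in X}\varphi(e_{x,x})$. For the KMS condition, by Theorem \ref{ThmSufficientKMS} it suffices to check that $\psi$, viewed as a state-like functional on $\ell_\infty(X)$ (after normalizing, or working with the functional directly), satisfies the relation $\psi(\chi_{f(A)})=\psi(\chi_A e^{\beta(\overline{h-h\circ f})})$ for every partial translation $f\colon A\to f(A)$. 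Writing both sides out using $\psi(\chi_B)=\sum_{x\in B}\varphi(e_{x,x})$, the left side is $\sum_{x\in f(A)}\varphi(e_{x,x})$ and the right side is $\sum_{x\in A}e^{\beta(h(x)-h(f(x)))}\varphi(e_{x,x})$. The equality between these two then reduces, term by term after reindexing the left sum by $x\mapsto f(x)$, to the single-point KMS identity $\varphi(e_{f(x),f(x)}) = e^{\beta(h(x)-h(f(x)))}\varphi(e_{x,x})$, which is exactly the KMS condition applied to the partial isometry $e_{f(x),x}$ (as in the proof of Theorem \ref{ThmStrongContKMSINTRO}). Summability, established above, justifies the interchange of the sum with the reindexing.

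For part (2), I would set $\eta = \varphi - \psi$. It is automatically a KMS functional since $\varphi$ and $\psi$ both satisfy the KMS condition and this condition is linear in the functional. Positivity of $\eta$ is the delicate point: it requires showing $\varphi(a)\geq \psi(a)$ for all positive $a$, equivalently that $\psi$ is dominated by $\varphi$. For $a\geq 0$ with finite propagation, $\varphi(a)=\varphi(E(a))$ by Theorem \ref{ThmFactorsCondExpINTRO}, so both $\varphi$ and $\psi$ agree with functionals on the diagonal; the claim then becomes $\varphi(d)\geq \psi(d)$ for diagonal $d=\sum_x d_x e_{x,x}\geq 0$, which follows because $\psi(d)=\sum_x d_x\varphi(e_{x,x})$ is a ``lower'' evaluation captured entirely by finite truncations $\varphi(\chi_F d)$, and $\varphi(d)-\varphi(\chi_F d)=\varphi((\chi_X-\chi_F)d)\geq 0$ by positivity of $\varphi$ and of $(\chi_X-\chi_F)d$. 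Passing to the limit over $\cF$ gives $\varphi(d)\geq\psi(d)$. Finally, $\eta$ vanishes on $\cK(\ell_2(X))$: the compacts are the closed span of the $e_{x,y}$, and on $e_{x,x}$ one computes directly $\psi(e_{x,x})=\varphi(e_{x,x})$ (the single surviving diagonal term), while on off-diagonal $e_{x,y}$ both $\varphi$ and $\psi$ vanish since they factor through $E$; hence $\eta$ kills a dense subset of $\cK(\ell_2(X))$ and, being bounded, all of it.

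The step I expect to be the main obstacle is verifying \emph{positivity} of $\eta=\varphi-\psi$ rigorously, since it is not purely formal: it hinges on the interplay between the factorization through $E$ (Theorem \ref{ThmFactorsCondExpINTRO}) and a monotone-approximation argument controlling $\psi$ from below by finite truncations of $\varphi$. Care is needed to ensure the domination $\varphi(a)\geq\psi(a)$ extends from diagonal operators to all positive $a\in\cstu(X)$, which is precisely where the conditional expectation does the essential work.
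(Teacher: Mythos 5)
Your proposal is correct and follows essentially the same route as the paper's (very terse) proof: well-definedness and positivity from the summability of $(\varphi(e_{x,x}))_{x\in X}$ together with the factorization $\varphi=\varphi\circ E$, the domination $\psi\le\varphi$ via finite truncations for positivity of the difference, agreement of $\varphi$ and $\psi$ on the matrix units for the vanishing on $\cK(\ell_2(X))$, and Theorem \ref{ThmSufficientKMS} for both KMS conditions. You have simply made explicit the steps the paper declares ``immediate,'' including the reduction of the KMS identity for $\psi$ to the single-point relation $\varphi(e_{f(x),f(x)})=e^{\beta(h(x)-h(f(x)))}\varphi(e_{x,x})$.
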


\begin{proof}
The fact that $\psi$ is well defined follows straightforwardly from the fact that $\varphi$ is positive and factors through $\ell_\infty(X)$ (Theorem \ref{ThmFactorsCondExpINTRO}). Positivity and strong continuity of $\psi$ are  then completely immediate. It is also immediate that $\psi\leq \varphi$, so $\varphi-\psi$ is also positive. Since $\psi\restriction \cK(\ell_2(X))=\varphi\restriction \cK(\ell_2(X))$, $\varphi-\psi$ vanishes on the compacts. We are only left to show that both $\psi$ and $\varphi-\psi$ satisfy the $(\sigma,\beta)$-KMS condition. But this is an immediate consequence of Theorem \ref{ThmSufficientKMS} and the formula of $\psi$.
\end{proof}

 Theorem \ref{ThmStrongContKMSINTRO} and Proposition \ref{PropKMSStateVanishComp}  show that, in order to understand the  KMS states on uniform Roe algebras, we only need to focus of the states which   vanish on the ideal of compact operators. For the remainder of this section, this will be our focus. Since the compacts form an ideal, we can factor those states through the quotient algebra. For that, recall that the \emph{uniform Roe corona of $X$} is 
 \[\roeq(X)=\cstu(X)/\cK(\ell_2(X))\]
 (see Definition \ref{DefintionRoeq}). If $\varphi$ is a state on $\cstu(X)$ which vanishes on $\cK(\ell_2(X))$, then $\varphi$ gives rise to a well-defined  state $\psi$ on $\roeq(X)$ determined by 
\[\psi(\pi(a))=\varphi(a), \text{ for all }\ a\in \cstu(X).\]
Moreover,  given a coarse map $h\colon X\to \R$, the flow $\sigma_{h}$ induces a flow $\sigma_{h}^\infty$ on $\roeq(X)$ by letting
\[\sigma_{h,t}^\infty(\pi(a))=\pi(\sigma_{h,t}(a))\ \text{ for all }\ a\in \cstu(X)\ \text{ and all }\ t\in \R\]
(see  Subsection \ref{SubsectionIntroMain} for more details).

Proposition \ref{PropRoeqQStatesAffineIsoINTRO} highlights the relations between $\varphi$ and $\psi$, and $\sigma_{h}$ and $\sigma_{h}^\infty$ defined above.

\begin{proof}[Proof of Proposition \ref{PropRoeqQStatesAffineIsoINTRO}]
Notice that if $b$ is an analytic element in $\cstu(X)$ for $\sigma_{h}$, then $\pi(b)$ is analytic for $\sigma_{h}^\infty$ and, moreover,
\[\pi(\sigma_{h,z}(b))=\sigma_{h,z}^\infty(\pi(b))\ \text{ for all }\ z\in \C.\]
Therefore, the image of the set of all analytic elements in $\cstu(X)$ under $\pi$ forms a dense set of analytic elements in $\roeq(X)$. Consequently, in order to check that a state $\psi$ on $\roeq(X)$ is a $(\sigma_{h}^\infty,\beta)$-KMS state, it suffices to prove that 
\[\psi(\pi(a)\sigma_{h,i\beta}^\infty(\pi(b)))=\psi(\pi(b)\pi(a)),\]
for all $a,b\in A$ with $b$ analytic. Observing that the left-hand-side above coincides with $(\psi\circ\pi)(a\sigma_{h,i\beta}(b))$ and that the right-hand-side equals $(\psi\circ\pi)(ba)$, the first statement of the proposition follows. The second statement in turn follows from the first one immediately.
\end{proof}

Proposition \ref{PropRoeqQStatesAffineIsoINTRO} then reduces our problem to the one of understanding the KMS states on the uniform Roe corona $\roeq(X)$. In view of Section  \ref{SectionIntermission}, it will be useful to study the center $\roeq(X)$  as well as its \cstar-subalgebras. This brings up a seemingly unexpected link between KMS states and the \emph{Higson corona}.  Recall:

\begin{definition}\label{DefinitionHigsonCor}
Let $X$ be a u.l.f.\ metric space. 
\begin{enumerate}
\item A bounded function $f\colon X\to \C$ is a \emph{Higson function} if for all $\eps>0$ and all $R>0$ there is a finite $F\subseteq X$ such that 
\[\forall x,y\in X\setminus F, \ d(x,y)<R\ \text{ implies }\ |f(x)-f(y)|<\eps.\]
The set of all Higson functions on $X$ forms a \cstar-subalgebra of $\ell_\infty(X)$ which we denote by $C_h(X)$.
\item The spectrum  of $C_h(X)$, denoted by $hX$, is called the \emph{Higson compactification} of $X$. So, the Gelfand  transform gives us the identification $C(hX)=C_h(X)$.
\item The boundary $\nu X=hX\setminus X$ is called the \emph{Higson corona} and we have the identification $C(\nu X)=C_h(X)/c_0(X)$.
\end{enumerate}
\end{definition}

Notice that, as $C_h(X)\subseteq \ell_\infty(X)$, we may canonically view $C_h(X)/c_0(X)$ as a \cstar-subalgebra of $\roeq(X)$; so, by the identification   $C(\nu X)=C_h(X)/c_0(X)$, we have  \[C(\nu X)\subseteq \roeq(X).\]  It has been recently observed that the center of $\roeq(X)$ is precisely the Higson corona of $X$. Indeed, the following was proven in  \cite[Proposition 3.6]{BaudierBragaFarahVignatiWillett2023} as a consequence of  \cite[Theorem 3.3]{SpakulaZhang2020JFA}.

\begin{proposition}\label{PropHigsonCorCenter}
Given a u.l.f.\ metric space $X$, we have that \[C(\nu X)=\cZ(\roeq(X)).\]
\end{proposition}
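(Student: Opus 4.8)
The statement is an equality of two $\cstar$-subalgebras of $\roeq(X)$, and the plan is to prove the two inclusions separately. The containment $C(\nu X)\subseteq\cZ(\roeq(X))$ is a direct commutator computation, whereas the reverse containment is the substantial part. Recall first that $C_h(X)\subseteq\ell_\infty(X)\subseteq\cstu(X)$ and $C_h(X)\cap\cK(\ell_2(X))=c_0(X)$, so $\pi(C_h(X))$ is exactly the copy of $C(\nu X)=C_h(X)/c_0(X)$ inside $\roeq(X)$. To show $\pi(g)$ is central for $g\in C_h(X)$ it suffices to check that $g$ commutes modulo $\cK(\ell_2(X))$ with a generating set of $\cstu(X)$, namely $\ell_\infty(X)$ together with the partial isometries $v_f$. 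Commutation with $\ell_\infty(X)$ is automatic, as the latter is abelian. For a partial translation $f\colon A\to f(A)$ with $R=\sup_{x\in A}d(x,f(x))<\infty$, a one-line computation on basis vectors gives $[g,v_f]=m\,v_f$, where $m$ is the diagonal operator with $m(y)=g(y)-g(f^{-1}(y))$ for $y\in f(A)$ and $m(y)=0$ otherwise. Since $d(y,f^{-1}(y))\le R$ for all $y\in f(A)$, the Higson condition forces $m(y)\to 0$ at infinity, i.e.\ $m\in c_0(X)\subseteq\cK(\ell_2(X))$; hence $[g,v_f]\in\cK(\ell_2(X))$ and $\pi(g)$ is central.

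For the reverse inclusion I would proceed in two steps. \textbf{Step 1} is a diagonalization: show that any $a\in\cstu(X)$ with $\pi(a)\in\cZ(\roeq(X))$ satisfies $a-E(a)\in\cK(\ell_2(X))$, so that $\cZ(\roeq(X))\subseteq\pi(\ell_\infty(X))$, where $E\colon\cstu(X)\to\ell_\infty(X)$ is the canonical conditional expectation. \textbf{Step 2} then characterizes, among diagonal elements, those whose image is central: given $g\in\ell_\infty(X)$ with $\pi(g)$ central, I must show $g\in C_h(X)$ modulo $c_0(X)$.

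\textbf{Step 2 in detail.} If $\pi(g)$ is central then $[g,v_f]=m\,v_f\in\cK(\ell_2(X))$ for every partial translation $f$, with $m$ as above. Since $v_fv_f^*=\chi_{f(A)}$ and $m$ is supported on $f(A)$, one recovers $m=(m\,v_f)v_f^*\in\cK(\ell_2(X))$, and a compact diagonal operator lies in $c_0(X)$. Thus $y\mapsto g(y)-g(f^{-1}(y))$ vanishes at infinity for \emph{every} partial translation $f$. If $g$ failed the Higson condition, there would be $\eps,R>0$ and pairs $(x_n,y_n)$ with $d(x_n,y_n)<R$, $|g(x_n)-g(y_n)|\ge\eps$, and $x_n,y_n\to\infty$; using that $X$ is u.l.f.\ (each point has boundedly many $R$-neighbors), I would thin these pairs to an injective, spread-out family and assemble them into a single partial translation $f\colon x_n\mapsto y_n$ of propagation $\le R$, for which $m(y_n)=g(y_n)-g(x_n)$ does not vanish at infinity, a contradiction. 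Hence $g\in C_h(X)$ and $\pi(g)\in C(\nu X)$, completing the reverse inclusion given Step 1.

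\textbf{The main obstacle.} The hard part is Step 1, $\cZ(\roeq(X))\subseteq\pi(\ell_\infty(X))$: being central forces $[a,\chi_A]\in\cK(\ell_2(X))$ for every $A\subseteq X$, equivalently that both corners $\chi_A a\chi_{X\setminus A}$ and $\chi_{X\setminus A}a\chi_A$ are compact for every $A$, and one must conclude that the entire off-diagonal part $a-E(a)$ is compact. The mechanism I would use is a quasi-locality argument: approximating $a$ in norm by a finite-propagation $a'$ of propagation $R$, if the off-diagonal part of $a'$ had infinitely many entries of modulus $\ge\delta$ at matrix positions $(y_n,x_n)$ that can be chosen pairwise at distance $>2R$, then taking $A=\{x_n\}$ (arranged so $y_n\notin A$) makes $[a',\chi_A]$ have essential norm $\ge\delta$, forcing $\dist([a,\chi_A],\cK(\ell_2(X)))\gtrsim\delta$ and contradicting centrality once $\delta$ exceeds the approximation error. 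Making this precise is exactly the content of the cited results \cite[Theorem 3.3]{SpakulaZhang2020JFA} and \cite[Proposition 3.6]{BaudierBragaFarahVignatiWillett2023}, which I would invoke to supply Step 1 rather than re-derive the full quasi-locality estimate here.
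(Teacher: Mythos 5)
Your proposal is correct in outline, but note that the paper itself gives no proof of this proposition at all: it simply quotes \cite[Proposition 3.6]{BaudierBragaFarahVignatiWillett2023}, which in turn rests on \cite[Theorem 3.3]{SpakulaZhang2020JFA}. So the comparison is between a pure citation and your partial derivation. What you prove from scratch is sound: the inclusion $C(\nu X)\subseteq \cZ(\roeq(X))$ via the identity $[g,v_f]=m v_f$ with $m\in c_0(X)$ is exactly the standard computation, and your Step 2 (a diagonal $g\in\ell_\infty(X)$ whose image is central must be Higson modulo $c_0(X)$) is a correct converse of the same computation --- the thinning of a family of bad pairs into a single fixed-point-free partial translation is legitimate because $X$ is u.l.f. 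You also correctly isolate the genuinely hard ingredient, Step 1, namely that an element of $\cstu(X)$ commuting with all of $\ell_\infty(X)$ modulo $\cK(\ell_2(X))$ lies in $\ell_\infty(X)+\cK(\ell_2(X))$; your sketched mechanism (approximate by finite propagation, decompose the off-diagonal part into finitely many $a_jv_{f_j}$, and cut by a characteristic function of a separated set to detect non-compactness) is the right one, though as written the quantifiers need care since the threshold $\delta$ depends on the finite-propagation approximant. Since you defer that step to the literature this is not a gap, but be aware that \cite[Proposition 3.6]{BaudierBragaFarahVignatiWillett2023} is the full statement you are proving, so citing it for Step 1 alone is circular in flavor; the honest external input is the \v{S}pakula--Zhang result. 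Net effect: your route makes explicit two thirds of an argument the paper leaves entirely to references, at the cost of still importing the same quasi-locality core.
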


 We now apply our results of Section \ref{SectionIntermission} to our coarse setting. In what follows, if $C$ is a unital \cstar-algebra, $\Omega(C)$ denotes the spectrum of $C$. So, $\Omega(C)$ is a compact Hausdoff topological space and  we use the identification $C=C(\Omega(C))$ given by   Gelfand transform.

\begin{theorem}
 Let $X$ be a u.l.f.\ metric space, $h\colon X\to \R$ be a coarse map, and $\beta\in \R$. Let $C$ be a unital \cstar-subalgebra of $C(\nu X)$.

 \begin{enumerate}
 \item For any  extreme $ (\sigma_{h}^\infty,\beta)$-KMS state   $\psi$ on $\roeq(X)$, there is $x\in \Omega(C)$ such that \[\psi(a)=a(x) \ \text{ for all }\ a\in C=C(\Omega(C)).\]
     \item Suppose there is a $(\sigma_{h}^\infty,\beta)$-KMS state on $\roeq(X)$  whose restriction to $C$ is faithful. Then, for any $x\in\Omega(C)$, there is an extreme $(\sigma_{h}^\infty,\beta)$-KMS state $\psi$ on $\roeq(X)$  such that \[\psi(a)=a(x) \ \text{ for all }\ a\in C=(\Omega(C)).\]
 \end{enumerate}
 \label{ThmExtRoeqGENERAL}
\end{theorem}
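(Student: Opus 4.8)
The plan is to derive Theorem \ref{ThmExtRoeqGENERAL} as a direct specialization of the abstract machinery built in Section \ref{SectionIntermission}. The key observation is that the hypotheses of Assumption \ref{Assumption} are met in the coarse setting: we take $A=\roeq(X)$, which is unital, together with the induced flow $\sigma=\sigma_h^\infty$ and the fixed $\beta\in\R$. For the central subalgebra $C$ required by Assumption \ref{Assumption}, we use the given unital \cstar-subalgebra $C\subseteq C(\nu X)$. The crucial point making this legitimate is Proposition \ref{PropHigsonCorCenter}, which identifies $C(\nu X)=\cZ(\roeq(X))$; hence any unital \cstar-subalgebra $C$ of $C(\nu X)$ is automatically a unital \cstar-subalgebra of $\cZ(\roeq(X))$, exactly as Assumption \ref{Assumption} demands. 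Once this identification is in place, the two conclusions of the theorem are nothing more than the corresponding statements of Section \ref{SectionIntermission} read off for this particular $A$, $\sigma$, and $C$.

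With the setup fixed, I would prove part (1) by invoking Proposition \ref{PropRestrictCenterVanishes}: every extreme $(\sigma_h^\infty,\beta)$-KMS state $\psi$ on $\roeq(X)$ restricts to a character on $C=C(\Omega(C))$, so there is $x\in\Omega(C)$ with $\psi(a)=a(x)$ for all $a\in C$. This is immediate once one checks that $\psi$, being extreme in $K_\beta$, is covered by the hypothesis of that proposition. For part (2), I would appeal to the second half of Theorem \ref{ThmMainIntermission}: the assumption that some $(\sigma_h^\infty,\beta)$-KMS state restricts faithfully to $C$ is precisely the faithfulness hypothesis there, which yields $K_\beta^x\neq\emptyset$ for every $x\in\Omega(C)$. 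Since $K_\beta^x$ is a nonempty weak$^*$-compact convex set, the Krein--Milman theorem provides an extreme point of $K_\beta^x$, and by the disjoint-union description $\mathrm{Ext}(K_\beta)=\bigsqcup_{x}\mathrm{Ext}(K_\beta^x)$ from Theorem \ref{ThmMainIntermission}, such a point is in fact extreme in all of $K_\beta$. Any $\psi\in\mathrm{Ext}(K_\beta^x)$ satisfies $\psi\restriction J_x=0$, which is exactly the statement that $\psi(a)=a(x)$ for all $a\in C$, giving the desired extreme state.

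The only genuine verification needed beyond citing the earlier results is confirming that Assumption \ref{Assumption} truly applies, i.e.\ that $C\subseteq\cZ(\roeq(X))$; but this is handed to us by Proposition \ref{PropHigsonCorCenter}. I expect no substantive obstacle here: the entire content of the theorem has been front-loaded into the abstract results of Section \ref{SectionIntermission}, and this proof is essentially a translation. If anything, the one place to be careful is ensuring that the nonemptiness of $K_\beta^x$ combined with Krein--Milman genuinely produces an \emph{extreme} point of $K_\beta$ (not merely of $K_\beta^x$); this is handled cleanly by the equality $\mathrm{Ext}(K_\beta)=\bigsqcup_{x\in\Omega(C)}\mathrm{Ext}(K_\beta^x)$, so there is nothing further to check.

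\begin{proof}
As observed in Proposition \ref{PropHigsonCorCenter}, we have $C(\nu X)=\cZ(\roeq(X))$. Hence any unital \cstar-subalgebra $C$ of $C(\nu X)$ is a unital \cstar-subalgebra of $\cZ(\roeq(X))$. Therefore, setting $A=\roeq(X)$ and $\sigma=\sigma_h^\infty$, we are precisely in the setting of Assumption \ref{Assumption}. Both items now follow immediately by applying the results of Section \ref{SectionIntermission}. Indeed, item (1) is exactly Proposition \ref{PropRestrictCenterVanishes}, while item (2) follows from Theorem \ref{ThmMainIntermission}: the faithfulness hypothesis gives $K_\beta^x\neq\emptyset$ for every $x\in\Omega(C)$, and since $K_\beta^x$ is weak$^*$-compact and convex, it contains an extreme point by the Krein--Milman theorem; by the identity $\mathrm{Ext}(K_\beta)=\bigsqcup_{x\in\Omega(C)}\mathrm{Ext}(K_\beta^x)$ of Theorem \ref{ThmMainIntermission}, this point is extreme in $K_\beta$ and vanishes on $J_x$, i.e.\ $\psi(a)=a(x)$ for all $a\in C=C(\Omega(C))$.
\end{proof}
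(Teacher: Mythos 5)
Your proposal is correct and follows exactly the route the paper takes: the paper's own proof is the one-line remark that the theorem is a mere corollary of Theorem \ref{ThmMainIntermission}, applied with $A=\roeq(X)$, $\sigma=\sigma_h^\infty$, and $C\subseteq C(\nu X)=\cZ(\roeq(X))$ via Proposition \ref{PropHigsonCorCenter}. Your write-up simply makes explicit the details (verification of Assumption \ref{Assumption}, the Krein--Milman step producing an extreme point of the nonempty weak$^*$-compact convex set $K_\beta^x$) that the paper leaves implicit, and these are all justified by facts already established in Section \ref{SectionIntermission}.
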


\begin{proof} 
This is a   mere corollary of Theorem \ref{ThmMainIntermission}.
\end{proof}

\begin{proof}[Proof of Theorem   \ref{ThmExtRoeqINTRO}]
    This is a particular case of Theorem \ref{ThmExtRoeqGENERAL} with $C=C(\nu X)$.
\end{proof}

 We now obtain Theorem \ref{ThmUltimoINTRO} by proving a more general version of it. For that, we  first generalize Definition \ref{DefinitionSupp}.

 \begin{definition}\label{DefinitionSuppGENEREAL}
Let $X$ be a u.l.f.\ metric space and $\bar X$ be a compactification of $X$.
\begin{enumerate}
    \item We call $\bar X$ \emph{Higson compatible} if \[f\restriction X\in  C_h(X)\ \text{ for all }\ f\in C(\bar X).\] 
    \item If $\bar X$ is Higson compatible and  $x\in \bar X$, we say that a state    $\varphi$ on $\cstu(X)$ is \emph{$\bar X$-supported on $x$} if for all neighborhoods $U\subseteq \bar X$ of $x$, we have $\varphi(\chi_{U\cap X})=1$.
\end{enumerate} 
 \end{definition}

Notice that if $\bar X$ is a Higson compatible compactification of $X$, then $C(\bar X)$ can be canonically identified with a \cstar-subalgebra of $C_h(X)$, which in turn allows us to identify $C(\bar X)/c_0(X)$ with a \cstar-subalgebra of $C(\nu X)\subseteq \roeq(X)$.

 \begin{theorem}\label{ThmUltimoGENERAL}
 Let $X$ be a u.l.f.\ metric space,    $h\colon X\to \R$ be a coarse map, and $\beta\in \R$. Let $\bar X$ be a Higson compatible compactification of $X$.  The following holds:
 \begin{enumerate}
 \item Any extreme  $(\sigma_h,\beta)$-KMS state on $\cstu(X)$ which vanishes on the compacts is $\bar X$-supported at some element of $\bar X$.
 \item If there is a  $(\sigma_h,\beta)$-KMS state on $\cstu(X)$ which vanishes on the compacts and such that its induced state on $\roeq(X)$ is faithful on $C(\bar X)/c_0(X)$, then for every $x\in \bar X$ there is a   $(\sigma_h,\beta)$-KMS state on $\cstu(X)$ which is $\bar X$-supported on $x$.
 \end{enumerate}
 \end{theorem}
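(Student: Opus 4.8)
The plan is to route the whole statement through the corona picture and the abstract machinery of Section \ref{SectionIntermission}, taking for the central subalgebra the one coming from the compactification, namely $C:=C(\bar X)/c_0(X)$, and then to convert the abstract conclusion ``$\psi(a)=a(x)$ for $a\in C$'' into the geometric notion of $\bar X$-support by a Urysohn sandwich. First I would fix the identifications. Since $\bar X$ is Higson compatible, restriction $f\mapsto f\restriction X$ embeds $C(\bar X)$ into $C_h(X)$ and carries $c_0(X)$ into $c_0(X)$, so $C$ is a unital \cstar-subalgebra of $C_h(X)/c_0(X)=C(\nu X)=\cZ(\roeq(X))$ by Proposition \ref{PropHigsonCorCenter}. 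The standard fact that $C(\bar X)/c_0(X)\cong C(\bar X\setminus X)$ via restriction to the boundary (using that the functions in $C(\bar X)$ vanishing on $\bar X\setminus X$ are exactly the extensions-by-zero of elements of $c_0(X)$) identifies $\Omega(C)$ with the corona $\partial X:=\bar X\setminus X$. This places us precisely in Assumption \ref{Assumption} with $A=\roeq(X)$ and this $C$, so Theorem \ref{ThmExtRoeqGENERAL} is available.

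The crux is the following sandwich lemma: if $\psi$ is a state on $\roeq(X)$ with $\psi(a)=a(x)$ for all $a\in C$, where $x\in\partial X$, then $\varphi:=\psi\circ\pi$ is $\bar X$-supported at $x$. To prove it I would fix an open neighborhood $U\subseteq\bar X$ of $x$; the sets $\{x\}$ and $\bar X\setminus U$ are disjoint and closed in the compact Hausdorff space $\bar X$, so Urysohn's lemma supplies $g\in C(\bar X,[0,1])$ with $g(x)=1$ and $g\equiv 0$ off $U$. Higson compatibility gives $g\restriction X\in C_h(X)$, and as operators in $\ell_\infty(X)$ one has $0\le g\restriction X\le\chi_{U\cap X}\le\chi_X$. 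Since $\pi(g\restriction X)$ is exactly the element of $C$ corresponding to $g\restriction\partial X$, the hypothesis yields $\varphi(g\restriction X)=\psi(\pi(g\restriction X))=g(x)=1$; positivity and unitality then force $1=\varphi(g\restriction X)\le\varphi(\chi_{U\cap X})\le\varphi(\chi_X)=1$, i.e.\ $\varphi(\chi_{U\cap X})=1$ (arbitrary neighborhoods contain open ones, so monotonicity finishes).

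Both parts are then short. For (1), an extreme $(\sigma_h,\beta)$-KMS state $\varphi$ vanishing on the compacts corresponds, under the affine isomorphism of Proposition \ref{PropRoeqQStatesAffineIsoINTRO}, to an extreme $(\sigma_h^\infty,\beta)$-KMS state $\psi$ on $\roeq(X)$: the compact-vanishing KMS states form a face (a convex decomposition of a state that kills positive compacts forces each summand to kill them too), so extreme points match up. Theorem \ref{ThmExtRoeqGENERAL}(1) then produces $x\in\Omega(C)=\partial X\subseteq\bar X$ with $\psi=\mathrm{ev}_x$ on $C$, and the sandwich lemma shows $\varphi$ is $\bar X$-supported at $x$. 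For (2), the hypothesized state faithful on $C=C(\bar X)/c_0(X)$ feeds Theorem \ref{ThmExtRoeqGENERAL}(2): for each $x\in\partial X$ it yields an extreme $(\sigma_h^\infty,\beta)$-KMS state $\psi$ on $\roeq(X)$ with $\psi=\mathrm{ev}_x$ on $C$, and $\varphi:=\psi\circ\pi$ is the desired $(\sigma_h,\beta)$-KMS state that is $\bar X$-supported at $x$.

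I expect the sandwich lemma to be the real content and the main obstacle: the support condition is phrased through the projections $\chi_{U\cap X}$, which in general are \emph{not} Higson functions and hence lie outside $C$, so $\psi$ cannot be evaluated on them directly; the Urysohn function whose restriction lands in $C_h(X)$ is exactly what bridges the indicator with the central subalgebra, and Higson compatibility is used precisely to guarantee $g\restriction X\in C_h(X)$. A secondary point worth flagging is that, for states vanishing on the compacts, only boundary points can occur: if $x\in X$ then $\{x\}$ is open and $\bar X$-support would force $\varphi(e_{x,x})=1$, impossible for a state that is zero on $\cK(\ell_2(X))$; hence the relevant $x$ in both parts lie in $\partial X=\bar X\setminus X$, matching the intended reading (and the $\nu X$-version of Theorem \ref{ThmUltimoINTRO}).
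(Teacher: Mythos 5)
Your proof is correct and follows the same route as the paper, which dispatches this theorem as a corollary of Proposition \ref{PropRoeqQStatesAffineIsoINTRO} and Theorem \ref{ThmExtRoeqGENERAL} applied to $C=C(\bar X)/c_0(X)$; your Urysohn sandwich argument supplies exactly the bridging step (from $\psi=\mathrm{ev}_x$ on $C$ to $\bar X$-support at $x$) that the paper leaves implicit, and your face argument for matching extreme points under the affine isomorphism is the right way to make part (1) precise. Your closing observation is also well taken: since $\Omega\bigl(C(\bar X)/c_0(X)\bigr)=\bar X\setminus X$, the points produced are necessarily boundary points, so the quantifier ``for every $x\in\bar X$'' in part (2) should be read as ``for every $x\in\bar X\setminus X$''.
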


\begin{proof} 
This is    a mere corollary of Proposition \ref{PropRoeqQStatesAffineIsoINTRO} and Theorem \ref{ThmExtRoeqGENERAL}.
\end{proof}

\begin{proof}[Proof of Theorem   \ref{ThmUltimoINTRO}]
    This is a particular case of Theorem \ref{ThmUltimoGENERAL} with $\bar X=h X$.
    \end{proof}

 \subsection{The size of the Higson corona}\label{SubsectionSize}
 We show that the Higson corona of an infinite u.l.f.\ metric space must always have $2^{2^{\aleph_0}}$ many elements (Theorem \ref{ThmSizeCorona}). Together with the previous results in this section, this will give us a very strong control of the cardinality of KMS states on $\cstu(X)$.

 In this subsection, we work a lot with partial bijections $f$ of $X$ and it will be useful to be able to write ``$f(A)$'' regardless of whether $A\subseteq \mathrm{Dom}(f)$.  We then define: given any set $X$, a partial bijection $f\colon \mathrm{Dom}(f)\to \mathrm{Im}(f)$ of $X$, and $A\subseteq X$, we let 
 \[f[A]=f(A\cap \mathrm{Dom}(f)).\]
 Also, given partial bijections $f$ and $g$ of $X$, we let $g\circ f$ be the partial bijection from $ f^{-1}[\mathrm{Dom}(g)]$ to $g[\mathrm{Im}(f)]$ defined by $g\circ f(x)=g(f(x)) $ for all $x\in f^{-1}[\mathrm{Dom}(g)]$.
 
The following lemma is an easy exercise and we leave the details to the reader. 

\begin{lemma}\label{LemmaSetEquaPartialFunc}
    Let $f$ and $g$ be partial bijections of $X$. Then
    \[f[A]\cap g[B]=g^{-1}((g^{-1}\circ f)[A]\cap B)\]
    for all $A,B\subseteq X$.\qed
\end{lemma}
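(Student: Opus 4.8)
The plan is to verify the set equation $f[A]\cap g[B]=g^{-1}\big((g^{-1}\circ f)[A]\cap B\big)$ by checking membership of an arbitrary point $z$ in both sides, carefully tracking the domains of the partial bijections so that the bracket notation $f[A]=f(A\cap\dom(f))$ is respected throughout. First I would unwind the left-hand side: $z\in f[A]\cap g[B]$ means there exist $x\in A\cap\dom(f)$ with $f(x)=z$ and $y\in B\cap\dom(g)$ with $g(y)=z$. In particular $z\in\ran(f)\cap\ran(g)$, so $z$ lies in $\dom(g^{-1})$ and the point $w:=g^{-1}(z)=y$ is well defined. The goal is to show that applying $g^{-1}$ to $z$ lands exactly in the set $(g^{-1}\circ f)[A]\cap B$, and conversely.

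Next I would analyze the right-hand side. A point $z$ lies in $g^{-1}\big((g^{-1}\circ f)[A]\cap B\big)$ precisely when $z\in\ran(g^{-1})=\dom(g)$\,—\,wait, more carefully, $z=g(w)$ for some $w$ in the argument set, i.e.\ $z\in g[(g^{-1}\circ f)[A]\cap B]$. So I want $w\in(g^{-1}\circ f)[A]$ and $w\in B$, with $z=g(w)$. The condition $w\in(g^{-1}\circ f)[A]$ unpacks, using the definition of composition of partial bijections, to: there is $x\in A$ with $x\in\dom(g^{-1}\circ f)=f^{-1}[\dom(g^{-1})]$ and $w=g^{-1}(f(x))$; equivalently $x\in A\cap\dom(f)$, $f(x)\in\dom(g^{-1})=\ran(g)$, and $w=g^{-1}(f(x))$. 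Then $z=g(w)=g(g^{-1}(f(x)))=f(x)$. Combining with $w\in B\cap\dom(g)$ and $z=g(w)$, this says exactly that $z=f(x)$ for some $x\in A\cap\dom(f)$ and $z=g(w)$ for some $w\in B\cap\dom(g)$, which is the left-hand side.

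The two directions are then symmetric bookkeeping: each implication follows by reading the chain of equalities $z=f(x)=g(w)$ forwards or backwards and invoking that $g\circ g^{-1}$ and $g^{-1}\circ g$ act as the identity on the appropriate ranges and domains. I expect the main obstacle to be purely notational rather than conceptual: one must be scrupulous about when a point lies in $\dom(g)$ versus $\ran(g)$, since the bracket convention silently intersects with domains, and a careless step could conflate $g^{-1}[\,\cdot\,]$ with $g^{-1}(\,\cdot\,)$ where the former only sees the part of its argument inside $\ran(g)$. Because the statement is flagged as an easy exercise, I would present the double-inclusion argument compactly, emphasizing the single key identity $z=f(x)=g(w)\iff g^{-1}(z)=w\in(g^{-1}\circ f)[A]\cap B$ and leaving the symmetric verification to the reader, exactly in the spirit intended.
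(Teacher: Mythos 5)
The paper offers no proof of this lemma (it is explicitly left as an exercise), so there is nothing to compare your argument against; the element-by-element double inclusion you carry out is certainly the intended route, and your bookkeeping of domains is correct. What you actually prove, however, is the identity
\[
f[A]\cap g[B]\;=\;g\big[(g^{-1}\circ f)[A]\cap B\big],
\]
and this is not what the lemma literally asserts. At the moment you wrote ``wait, more carefully, $z=g(w)$ for some $w$ in the argument set'' you replaced the outer $g^{-1}(\,\cdot\,)$ by $g[\,\cdot\,]$. Under the paper's own convention --- applying a partial bijection to a set means taking the image of the part of that set lying in its domain --- the displayed identity with $g^{-1}$ on the outside is false: the set $(g^{-1}\circ f)[A]\cap B$ is contained in $\ran(g^{-1})=\dom(g)$, which is in general disjoint from $\dom(g^{-1})=\ran(g)$, so the literal right-hand side can be empty while the left-hand side is not. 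Concretely, take $X=\{1,2\}$, let $f$ be the identity on $\{1\}$, let $g\colon\{2\}\to\{1\}$, and let $A=\{1\}$, $B=\{2\}$; then $f[A]\cap g[B]=\{1\}$, whereas $(g^{-1}\circ f)[A]\cap B=\{2\}$ and $g^{-1}[\{2\}]=g^{-1}(\{2\}\cap\ran(g))=\emptyset$.

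So the statement contains a typo (the outer $g^{-1}$ should be $g$; equivalently, the clean formulation is $g^{-1}[\,f[A]\cap g[B]\,]=(g^{-1}\circ f)[A]\cap B$, from which your version follows by applying $g$), and your proof silently repairs it rather than proving it as written. You should say so explicitly instead of presenting the repair as a ``reading'' of the notation: the interpretation ``$z\in g^{-1}(S)$ iff $z=g(w)$ for some $w\in S$'' describes the image of $S$ under $g$, not under $g^{-1}$, so adopting it is a correction, not a disambiguation. The correction is harmless for the lemma's only application (Proposition \ref{PropThinSets}), where one merely needs that $f[A]\cap g[B]$ is the image of $(g^{-1}\circ f)[A]\cap B$ under an injective map, so that finiteness of the latter forces finiteness of the former; with the statement amended, your two inclusions go through exactly as you describe.
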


\begin{definition}
    Let $X$ be a u.l.f.\ metric space. 
A subset $A\subseteq X$ is \emph{thin} if $f[A]\cap A$ is finite for all partial translations $f$ of $X$ which \emph{do not fix points}, i.e.,  such that $f(x)\neq x$ for all $x\in \mathrm{Dom}(f)$.
\end{definition}

 \begin{lemma}\label{LemmaThinSets}
     Every infinite u.l.f.\  metric space  contains an infinite thin subset.
 \end{lemma}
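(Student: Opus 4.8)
The plan is to construct the infinite thin subset greedily, exploiting the fact that a partial translation displaces points by only a bounded amount. The key reduction is the following: if $A\subseteq X$ has the property that for every $r>0$ there are only finitely many \emph{pairs} of distinct points of $A$ at distance at most $r$, then $A$ is automatically thin. Indeed, let $f$ be a partial translation of $X$ with no fixed points and put $r=\sup_{x\in\mathrm{Dom}(f)}d(x,f(x))<\infty$. Any $b\in f[A]\cap A$ has the form $b=f(a)$ for some $a\in A\cap\mathrm{Dom}(f)$ with $b\in A$; since $f$ fixes no point, $a\neq b$, and $d(a,b)=d(a,f(a))\le r$. Thus each element of $f[A]\cap A$ is the image of a pair of distinct points of $A$ at distance $\le r$, of which there are only finitely many, so $f[A]\cap A$ is finite.

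It therefore suffices to build an infinite $A$ with this intrinsic sparseness property, and here I would use uniform local finiteness. Since balls of each fixed radius are finite, any finite union of balls is finite, whereas $X$ itself is infinite. I would construct a sequence $(a_n)_{n\in\N}$ of points inductively: pick $a_1$ arbitrarily, and having chosen $a_1,\dots,a_n$, use the infiniteness of $X$ to select
\[a_{n+1}\in X\setminus\bigcup_{i=1}^n\{y\in X: d(a_i,y)\le n\},\]
so that $d(a_{n+1},a_i)>n$ for all $i\le n$. Setting $A=\{a_n: n\in\N\}$, the points are pairwise distinct, so $A$ is infinite.

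Finally I would verify the sparseness property. Fix $r>0$ and suppose $i<j$ satisfy $d(a_i,a_j)\le r$. By the construction applied at the step producing $a_j$, we have $d(a_i,a_j)>j-1$, hence $j-1<r$, i.e.\ $j<r+1$. Thus only finitely many values of the index $j$ can occur, leaving only finitely many pairs $(a_i,a_j)$ of distinct points of $A$ at distance $\le r$. By the reduction of the first paragraph, $A$ is thin, which completes the argument.

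I do not anticipate a serious obstacle. The only step requiring genuine thought is the conceptual reduction in the first paragraph, which converts thinness—a condition quantified over \emph{all} fixed-point-free partial translations—into a purely internal sparseness condition on $A$; once that is isolated, the greedy construction is immediate, and the possibility of each inductive step rests solely on $X$ being infinite and uniformly locally finite.
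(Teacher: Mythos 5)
Your proof is correct and follows essentially the same route as the paper: a greedy construction of a sequence whose mutual distances tend to infinity, which the paper carries out using unboundedness (forcing $d(x_k,x_\ell)\geq \max_{i,j<\ell}d(x_i,x_j)+\ell$) and then declares ``clearly thin.'' Your explicit reduction of thinness to the finiteness of the set of pairs of points of $A$ at distance at most $r$ is precisely the verification the paper leaves implicit, so the two arguments coincide in substance.
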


 \begin{proof}
If  $(X,d)$ is infinite and u.l.f., then $X$ is unbounded. Hence, we can   inductively pick a  sequence $(x_i)_{i\in \N}$ in $X$ such that 
\[d(x_k,x_\ell)\geq \max _{i,j<\ell}d(x_i,x_j)+\ell\]
for all $\ell>k$ in $\N$. The set $A=\{x_i\mid i\in \N\}$ is clearly thin.\footnote{Equivalently, if $A\subseteq X$ is the image of a coarse embedding of $\{n^2\mid n\in\N\}$ in $X$, then $A$ is thin.}
 \end{proof}

 \begin{proposition}\label{PropThinSets}
     Let $X$ be a u.l.f.\ metric space,  $C\subseteq X$ be thin, and let $C=A\sqcup B$ be a partition of $C$. If $f$ and $g$ are partial translations of $X$, then $f[A]\cap g[B]$ is finite.
 \end{proposition}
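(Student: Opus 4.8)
The plan is to pull the entire intersection back through $g$ and thereby reduce everything to a single self-intersection of the thin set $C$, to which the definition of thinness can be applied directly. First I would invoke Lemma \ref{LemmaSetEquaPartialFunc} to write
\[
f[A]\cap g[B]=g^{-1}\big((g^{-1}\circ f)[A]\cap B\big).
\]
Since $g$ is a partial bijection, so is $g^{-1}$, and any partial bijection carries finite sets to finite sets; hence it will suffice to prove that $k[A]\cap B$ is finite, where $k=g^{-1}\circ f$. Before that I would record that $k$ is again a partial translation: for $y\in\mathrm{Im}(g)$ one has $d(y,g^{-1}(y))=d(g(g^{-1}(y)),g^{-1}(y))\le \sup_{x\in\mathrm{Dom}(g)}d(x,g(x))<\infty$, so $g^{-1}$ is a partial translation, and a composition of partial translations is a partial translation by the triangle inequality.

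The key observation I would then make is that the fixed points of $k$ never contribute to $k[A]\cap B$. Indeed, if $z\in k[A]\cap B$, then $z=k(x)$ for some $x\in A\cap\mathrm{Dom}(k)$ while $z\in B$; since $A$ and $B$ are disjoint with $x\in A$ and $z\in B$, we must have $k(x)=z\neq x$. Consequently, letting $k_0$ denote the restriction of $k$ to $\{x\in\mathrm{Dom}(k)\mid k(x)\neq x\}$, I would conclude $k[A]\cap B=k_0[A]\cap B$, where $k_0$ is a partial translation that does not fix points.

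Finally, using $A\subseteq C$ and $B\subseteq C$, I would note $k_0[A]\cap B\subseteq k_0[C]\cap C$, and the latter set is finite precisely because $C$ is thin and $k_0$ is a fixed-point-free partial translation. Tracing back through the first paragraph, $f[A]\cap g[B]$ is then the image of a finite set under the partial bijection $g^{-1}$, hence finite. I expect the only genuine subtlety to be the fixed-point issue: thinness of $C$ controls $k_0[C]\cap C$ only for \emph{fixed-point-free} $k_0$, so the crux is the elementary but essential remark that the disjointness of $A$ and $B$ forces every element of $k[A]\cap B$ to arise from a non-fixed point of $k$. Everything else is routine bookkeeping with partial bijections.
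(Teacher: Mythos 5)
Your proof is correct and follows essentially the same route as the paper's: both reduce via Lemma \ref{LemmaSetEquaPartialFunc} to a single partial translation $k=g^{-1}\circ f$, observe that the disjointness of $A$ and $B$ lets one discard the fixed points of $k$, and then apply thinness of $C$ to the resulting fixed-point-free partial translation. Your write-up is in fact slightly cleaner, since you spell out why $g^{-1}$ and the composition are again partial translations, which the paper only asserts.
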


  \begin{proof}
By Lemma \ref{LemmaSetEquaPartialFunc}, it is enough to show that $(g^{-1}\circ f)[A]\cap B$ is finite. As the composition of partial translations is still a partial translation, it is enough to show that  $ f[A]\cap B$  is finite for any partial translation $f$ of $X$. Fix such $f$ and, replacing $A$ with $A\cap \mathrm{Dom}(f)$, we also assume that $A\subseteq \mathrm{Dom}(f)$. Let us show that $f(A)\cap A$ is finite. Set
\[A_0=\{x\in A\mid f(x)=x\} \ \text{ and }\ A_1=A\setminus A.\]
Then, as $A\cap B=\emptyset$, we have
\[f(A)\cap B=f(A_0\cup A_1)\cap B=f(A_1)\cap B.\]
Let $f_1=f\restriction A_1$. Then $f_1$ has no fixed points and 
\[f_1(A)\cap B\subseteq f_1[C]\cap C.\]
Since $C$ is thin, $f_1[C]\cap C$ must be finite. So,  $f(A)\cap B$ is finite.
 \end{proof}
 
 Given a u.l.f.\ metric space $X$, let  $\beta X$ denote the \emph{Stone--\v{C}ech compactification of $X$}.\footnote{Please be careful not to mistake this $\beta$ for the inverse temperature!} Since $X$ is discrete, $\beta X$ can be identified with the space of ultrafilters on $X$ endowed with the \emph{Stone topology}, i.e., the topology generated by open sets of the form 
 \[U_A=\{\omega\in \beta X\mid A\in \omega\}\]
 for all $A\subseteq X$.  Given $A\subseteq X$, we let $\bar A$ denote the closure of $A$ in $\beta X$ and let $\hat A=\bar A\setminus A$. By the defining property of $\beta X$, any element in   $\ell_\infty(X)$ extends to one in $C(\beta X)$. This defines a  canonical isomorphism between $\ell_\infty(X)$ and $C(\beta X)$, and we identify those algebras under this isomorphism. We identify $C(\hat X)$ with $C(\beta X)/c_0(X)$ via Gelfand transform. Hence,  under these identifications, we have
 \[C(\hat X)=\ell_\infty(X)/c_0(X)\subseteq\roeq(X). \]  
 
We now define invariant subsets of the Stone--\v{C}ech compactification.   For that, recall that, by the defining property of $\beta X$, any partially defined map $f\colon \mathrm{Dom}(f)\subseteq X\to \mathrm{Im}(f)\subseteq X$ can be continuously extended to a map $\overline{\mathrm{Dom}(f)}\to \overline{\mathrm{Im}(f)}$. By abuse of notation, we still denote this map by $f$.

\begin{definition}\label{DefiInvariant}
    Let $X$ be a u.l.f.\ metric space and $A\subseteq \beta X$. We say that $A$ is \emph{invariant} if $f[A]\subseteq A$ for all partial translations $f$ of $X$.
\end{definition}

For the next lemma, notice that if  $L\subseteq \hat X$ is a clopen subset, then  $\chi_L\in C(\hat X)$. Hence, it makes sense to wonder whether $\chi_L$ can also be in $C(\nu X)\subseteq C(\hat X)$.

\begin{lemma}\label{LemmaInvariantCorona}
    Let $X$ be a u.l.f.\ metric space and $L\subseteq \hat X$ be an  invariant clopen subset. Then $\chi_L\in C(\nu X)$.
\end{lemma}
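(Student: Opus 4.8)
The plan is to reduce the statement to a purely combinatorial property of a single subset of $X$ and then to extract that property from invariance using the Stone--\v{C}ech machinery developed above. First I would pass from $L$ to the diagonal. Since $\hat X$ is a compact, zero-dimensional Hausdorff space whose basic clopen sets are the $\hat B$ for $B\subseteq X$, and a clopen $L\subseteq\hat X$ is both compact and open, a standard finite-subcover argument — combined with $\hat B\cup\hat C=\widehat{B\cup C}$, which follows from $\bar B\cup\bar C=\overline{B\cup C}$ — shows that $L=\hat A$ for some $A\subseteq X$. Under the identification $C(\hat X)=\ell_\infty(X)/c_0(X)$, the function $\chi_L$ is precisely the image of $\chi_A\in\ell_\infty(X)$. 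Because $c_0(X)\subseteq C_h(X)$, membership $\chi_L\in C(\nu X)=C_h(X)/c_0(X)$ is equivalent to $\chi_A\in C_h(X)$; thus it suffices to prove that the $\{0,1\}$-valued function $\chi_A$ is a Higson function.

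Next I would establish $\chi_A\in C_h(X)$ by contradiction. If $\chi_A$ fails the Higson condition, there are $R>0$ and $\varepsilon>0$ witnessing the failure; since $\chi_A$ takes only the values $0$ and $1$, this yields, outside every finite set, a pair $x,y$ with $d(x,y)<R$ and $\chi_A(x)\neq\chi_A(y)$. Naming in each such pair the $A$-point $x$ and the $A^c$-point $y$, and constructing the pairs inductively so that each new pair avoids the finite set of previously chosen points, I obtain infinite sets $D\subseteq A$ and $D'\subseteq A^c$ together with a bijection $f\colon D\to D'$ satisfying $\sup_{x\in D}d(x,f(x))\le R$. The freshness of the construction makes $f$ well defined and injective, so $f$ is an honest partial translation of $X$ carrying $A$-points to $A^c$-points.

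Finally I would contradict invariance of $L=\hat A$. Choose a free ultrafilter $\omega$ with $D\in\omega$, so that $\omega\in\hat D\subseteq\hat A=L$ and $\omega\in\overline{\mathrm{Dom}(f)}=\bar D$. The partial translation $f$ extends to a homeomorphism $\bar D\to\overline{D'}$ carrying $\hat D$ onto $\widehat{D'}$, whence $\tilde f(\omega)\in\widehat{D'}$. Since $D'\subseteq A^c$, and since $\bar A$ is clopen with $\bar A\cap\overline{A^c}=\emptyset$ and $\bar A\cup\overline{A^c}=\beta X$, one has $\widehat{A^c}=\hat X\setminus\hat A$, so $\tilde f(\omega)\notin\hat A=L$. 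But $\omega\in L\cap\overline{\mathrm{Dom}(f)}$, so invariance forces $\tilde f(\omega)\in f[L]\subseteq L$ — a contradiction. Hence $\chi_A$ is Higson and $\chi_L\in C(\nu X)$.

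I expect the main obstacle to be the middle step: extracting from a merely infinite $R$-boundary a genuine partial translation that is simultaneously injective and of uniformly bounded displacement, and then verifying that its Stone--\v{C}ech extension actually transports the chosen boundary ultrafilter off of $\hat A$. The reductions in the first and third paragraphs are routine manipulations of clopen sets and closures in $\beta X$, whereas the combinatorial selection — where uniform local finiteness is implicitly what keeps the displacement bounded while maintaining injectivity — carries the real content.
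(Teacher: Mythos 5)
Your proof is correct, but it takes a genuinely different route from the paper. The paper's argument is operator-algebraic: it checks that $\chi_L$ commutes with each generator $\pi(av_f)$ of $\roeq(X)$ (using invariance of $L$ under both $f$ and $f^{-1}$ to get $f[L]=\hat B\cap L$), and then invokes Proposition \ref{PropHigsonCorCenter}, i.e.\ the nontrivial external identification $\cZ(\roeq(X))=C(\nu X)$, to conclude. You instead bypass the center entirely: you write $L=\hat A$ via compactness of clopen sets in $\hat X$, reduce the claim to $\chi_A\in C_h(X)$, and prove the contrapositive by extracting from a failure of the Higson condition a fixed-point-free partial translation $f$ of displacement $<R$ carrying an infinite $D\subseteq A$ onto $D'\subseteq A^c$, whose Stone--\v{C}ech extension then moves a point of $L\cap\bar D$ out of $\hat A$, contradicting invariance. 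All the steps check out: clopen subsets of $\hat X$ are indeed finite unions of basic sets $\hat B$ and hence of the form $\hat A$; the equivalence $\chi_L\in C(\nu X)\Leftrightarrow\chi_A\in C_h(X)$ is correct because $c_0(X)\subseteq C_h(X)$; and the extension of the bijection $D\to D'$ does carry $\hat D$ onto $\widehat{D'}\subseteq\widehat{A^c}=\hat X\setminus\hat A$. What your approach buys is self-containedness (no appeal to the cited center computation) plus the extra information that $\chi_L$ lifts to an honest Higson function $\chi_A$, and it reveals that only invariance under fixed-point-free partial translations is actually used; what the paper's approach buys is brevity, since given Proposition \ref{PropHigsonCorCenter} the whole proof is a two-line commutator computation. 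One small quibble with your commentary rather than your proof: uniform local finiteness plays no role in your middle step --- the displacement bound comes for free from the choice of $R$ and injectivity from the freshness of the inductive selection --- so the ``real content'' you locate there is in fact routine.
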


\begin{proof}
 By Proposition
\ref{PropHigsonCorCenter}, it is enough to notice that $\chi_L$ is in the center of $\roeq(X)$. Hence, since $\cstu[X]$ is dense in $\cstu(X)$ and spanned by $av_f$, where $a\in \ell_\infty(X)$ and $f$ is a    partial translation of $X$, we only need to show that $\chi_{L}$ commutes with $w_f=\pi(v_f)$ for all partial translations $f$ of $X$. Fix such partial translation $f$ and let $A=\mathrm{Dom}(f)$ and $B=\mathrm{Im}(f)$. Then, $w_f=  \chi_{\hat B} w_f\chi_{\hat A}$ and 
\begin{equation}\label{EqCenter}
    w_f\chi_L=  w_f\chi_{\hat A\cap L}=\chi_{\hat B\cap f[L]}w_f 
    =\chi_{f[L]}w_f
\end{equation} 
Notice that $f[L]=\hat B\cap L$. Indeed, since $L$ is invariant and $f$ is a partial translation, $f[L]\subset \hat B\cap L$. On the other hand, as $f^{-1}$ is also a partial translation, we have $f^{-1}[L]\subseteq L$. Hence, as $\hat B\cap L\subseteq f[f^{-1}[L] ]$, we also have $ \hat B\cap L\subseteq f[L]$. We can then conclude from \eqref{EqCenter} that $w_f\chi_L=\chi_Lw_f$. As the partial translation $f$ was arbitrary, we conclude that $\chi_L\in C(\nu X)$ as desired.
\end{proof}

\begin{lemma}\label{LemmaSepHigCo}
    Let $X$ be a u.l.f.\ metric space and $C\subseteq X$ be thin. If $\omega,\omega'\in \hat C$ are distinct, then there are disjoint invariant open subsets $U,V\subseteq \hat X$ such that $\omega\in U$ and $\omega'\in V$.
\end{lemma}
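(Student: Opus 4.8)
The plan is to separate $\omega$ and $\omega'$ by first splitting the thin set $C$ into two pieces, one in each ultrafilter, and then to form the invariant hulls of these pieces under all partial translations; Proposition \ref{PropThinSets} will force the two hulls to overlap only inside $X$, so that they become genuinely disjoint once we pass to the corona $\hat X$.

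First I would use that $\omega\neq\omega'$ are distinct ultrafilters to choose $A\subseteq X$ with $A\in\omega$ and $X\setminus A\in\omega'$. Since $C\in\omega\cap\omega'$ (both points lie in $\hat C\subseteq\hat X$), setting $A'=A\cap C$ and $B'=C\setminus A'=C\cap(X\setminus A)$ gives a partition $C=A'\sqcup B'$ with $A'\in\omega$ and $B'\in\omega'$. Next I would define, as unions over all partial translations $f,g$ of $X$,
\[
U_0=\bigcup_f \overline{f[A']}\qquad\text{and}\qquad V_0=\bigcup_g \overline{g[B']}.
\]
Each $\overline{f[A']}$ is the closure in $\beta X$ of a subset of $X$, hence clopen, so $U_0$ and $V_0$ are open. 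Taking $f$ to be the identity partial translation shows $\overline{A'}\subseteq U_0$, and since $A'\in\omega$ we get $\omega\in U_0$; symmetrically $\omega'\in V_0$. For invariance I would verify, for any partial translation $h$, the identity $h[\overline{f[A']}]=\overline{(h\circ f)[A']}$: using that closures of subsets of $X$ distribute over intersections in $\beta X$ (so $\overline{f[A']}\cap\overline{\mathrm{Dom}(h)}=\overline{f[A']\cap\mathrm{Dom}(h)}$) and that the continuous extension of $h$ carries the closure of a set to the closure of its image. Because $h\circ f$ is again a partial translation and $h[\cdot]$ commutes with unions, this yields $h[U_0]\subseteq U_0$, so $U_0$ (and likewise $V_0$) is invariant in the sense of Definition \ref{DefiInvariant}.

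The decisive step is disjointness. Using $\overline{S}\cap\overline{T}=\overline{S\cap T}$ for $S,T\subseteq X$, I would compute
\[
U_0\cap V_0=\bigcup_{f,g}\Big(\overline{f[A']}\cap\overline{g[B']}\Big)=\bigcup_{f,g}\overline{f[A']\cap g[B']}.
\]
By Proposition \ref{PropThinSets}, applied to the partition $C=A'\sqcup B'$ of the thin set $C$, each $f[A']\cap g[B']$ is a finite subset of $X$; finite subsets of $X$ are closed in $\beta X$, so every term equals $f[A']\cap g[B']\subseteq X$. Hence $U_0\cap V_0\subseteq X$, i.e.\ $U_0$ and $V_0$ share no points of the boundary. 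Finally I would set $U=U_0\cap\hat X$ and $V=V_0\cap\hat X$: these are open in $\hat X$, contain $\omega,\omega'$ (which lie in $\hat C\subseteq\hat X$), are invariant (the extensions of partial translations map boundary points to boundary points, so $U=U_0\cap\hat X$ inherits invariance from $U_0$), and are disjoint since $U_0\cap V_0\subseteq X$ is disjoint from $\hat X$. The only real subtlety, beyond the bookkeeping with partially defined domains, is the clean interplay between thinness and the invariant-hull construction, which is exactly what makes the overlap land in $X$; once that is in place the rest is formal.
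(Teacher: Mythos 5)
Your proposal is correct and follows essentially the same route as the paper: split the thin set $C$ into two pieces $A'\in\omega$, $B'\in\omega'$ via a set separating the two ultrafilters, take the invariant hulls under all partial translations, and use Proposition \ref{PropThinSets} to see that any two terms of the hulls meet only in a finite subset of $X$, hence not in the corona. Your version only differs cosmetically in working with closures in $\beta X$ and intersecting with $\hat X$ at the end rather than with the sets $\widehat{f[A]}$ directly, and in spelling out the invariance computation $h[\overline{f[A']}]=\overline{(h\circ f)[A']}$ that the paper leaves implicit.
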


\begin{proof}
Since $\omega,\omega'\in \bar C$, it follows that $C\in \omega$ and $C\in \omega'$. As $\omega'\neq \omega'$, there is $D\subseteq X$ such that $D\in \omega $ and $D\not\in \omega'$. Hence, 
\[A=C\cap D\in \omega\ \text{ and }\ B=C\setminus D\in \omega'.\]
Therefore, $\omega\in \hat A$ and $\omega'\in \hat B$. Let $\mathcal{PT}$ denote the set of all partial translations of $X$ and define
\[U=\bigcup_{f\in \mathcal{PT}}\widehat{f[A]}\ \text{ and }\ V=\bigcup_{f\in \mathcal{PT}}\widehat{f[B]}.\]
Clearly, $U$ and $V$ are open, invariant and contain $\omega$ and $\omega'$, respectively. We only need to notice they are also disjoint. For that, notice that Lemma \ref{LemmaThinSets} implies that $f[A]\cap g[B]$ is finite for all $f,g\in \mathcal {PT}$. But then $\widehat{f[A]}\cap \widehat{g[B]}=\emptyset$  for all $f,g\in \mathcal {PT}$, which in turn implies that $U\cap V=\emptyset$.
\end{proof}

\begin{theorem}
    \label{ThmSizeCorona}
    Let $X$ be an infinite u.l.f.\ metric space. Then $\nu X$ has at least $2^{2^{\aleph_0}}$ elements.
\end{theorem}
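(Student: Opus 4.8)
The plan is to locate a single copy of the Stone--\v{C}ech remainder of a countable set inside $\hat X$ on which the quotient map onto $\nu X$ is injective. First I would invoke Lemma~\ref{LemmaThinSets} to fix an infinite thin set $C\subseteq X$, which I may take to be countable. Since $C$ is a countable discrete space, its remainder $\hat C=\bar C\setminus C$ is homeomorphic to $\beta\N\setminus\N$, and hence has exactly $2^{2^{\aleph_0}}$ points by the classical computation of the size of $\beta\N\setminus\N$. As $C_h(X)$ is a unital \cstar-subalgebra of $\ell_\infty(X)$, Gelfand duality turns the inclusion into a continuous surjection $q\colon\hat X\to\nu X$ (the restriction-of-characters map), and $\hat C\subseteq\hat X$. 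Thus it is enough to prove that $q\restriction\hat C$ is injective, for then $|\nu X|\geq|\hat C|=2^{2^{\aleph_0}}$.

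To prove injectivity, fix distinct $\omega,\omega'\in\hat C$. Both contain $C$, so there is $A\subseteq C$ with $A\in\omega$ and $B:=C\setminus A\in\omega'$. Since $q(\omega)=q(\omega')$ would force $\omega$ and $\omega'$ to agree on every Higson function, it suffices to produce $g\in C_h(X)$ with $\lim_\omega g\neq\lim_{\omega'}g$. Conceptually this is the separation provided by Lemma~\ref{LemmaSepHigCo} together with Lemma~\ref{LemmaInvariantCorona}, but I will realize it by an explicit Higson function exploiting that $A$ and $B$ are coarsely far apart.

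The crucial combinatorial input is that $A$ and $B$ are asymptotically disjoint. Indeed, the argument behind Proposition~\ref{PropThinSets} shows that, for every $R>0$, only finitely many pairs of distinct points of $C$ lie within distance $R$: the graph on $C$ whose edges join distinct points at distance $<R$ has bounded degree by uniform local finiteness, so it decomposes into finitely many partial matchings, each a fixed-point-free partial translation to which thinness applies. Consequently $\{x:d(x,A)\le R\}\cap\{x:d(x,B)\le R\}$ is finite for every $R$. I then set
\[g(x)=\frac{d(x,B)}{d(x,A)+d(x,B)},\]
a bounded function with $g\equiv1$ on $A$ and $g\equiv0$ on $B$. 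Because the set where $d(x,A)+d(x,B)$ stays bounded is finite, off a finite set $g$ has vanishing variation, so $g$ is a Higson function; hence $\lim_\omega g=1\neq0=\lim_{\omega'}g$ and $q(\omega)\neq q(\omega')$, as needed.

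The main obstacle is precisely the passage from separation inside $\hat X$ to separation inside $\nu X$. The invariant open sets delivered by Lemma~\ref{LemmaSepHigCo} are increasing unions of the clopen sets $\widehat{\{x:d(x,A)\le R\}}$ and need not be clopen, so their characteristic functions need not lie in $C(\nu X)$ and Lemma~\ref{LemmaInvariantCorona} cannot be applied off the shelf. This is why I route the argument through the explicit function $g$: the one genuinely technical step is checking that $g$ has vanishing variation, i.e.\ that for all $\eps,R>0$ there is a finite $F$ with $|g(x)-g(y)|<\eps$ whenever $x,y\notin F$ and $d(x,y)<R$. This follows from the estimate that, on the region where $d(x,A)+d(x,B)\ge M$, moving a point a bounded distance changes $g$ by $O(R/M)$, together with the finiteness of the region where $d(x,A)+d(x,B)<M$.
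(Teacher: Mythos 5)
Your proof is correct, and while it follows the paper's overall skeleton (fix an infinite thin set $C$ via Lemma~\ref{LemmaThinSets}, note $|\hat C|=2^{2^{\aleph_0}}$, and show the canonical surjection $\hat X\to\nu X$ is injective on $\hat C$), the key injectivity step is carried out by a genuinely different and more self-contained argument. The paper separates $\omega$ from $\omega'$ by building disjoint \emph{invariant} open sets $U,V\subseteq\hat X$ (Lemma~\ref{LemmaSepHigCo}), passing to $\bar U$, which is clopen because $\beta X$ is extremally disconnected and still invariant and disjoint from $V$, and then invoking Lemma~\ref{LemmaInvariantCorona} --- hence ultimately the identification $C(\nu X)=\cZ(\roeq(X))$ of Proposition~\ref{PropHigsonCorCenter}, an external input --- to see that $\chi_{\bar U}$ descends to $C(\nu X)$. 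You instead exhibit an explicit Higson function $g(x)=d(x,B)/(d(x,A)+d(x,B))$ separating the two ultrafilters; your combinatorial input (finitely many pairs of distinct points of $C$ within any fixed distance, via edge-colouring a bounded-degree graph into fixed-point-free partial translations and applying thinness) is exactly the content of Proposition~\ref{PropThinSets}, and your variation estimate $|g(x)-g(y)|\le d(x,y)/(d(y,A)+d(y,B))$ together with the finiteness of $\{x: d(x,A)+d(x,B)<M\}$ correctly verifies the Higson condition. (The denominator is nonzero since $X$ is uniformly locally finite, hence discrete, so $d(x,A)=0$ forces $x\in A$ and $A\cap B=\emptyset$.) Your approach buys independence from extremal disconnectedness of $\beta X$ and from the theorem that $C(\nu X)$ is the center of the uniform Roe corona, using only the definition of Higson functions; the paper's approach buys reusable structural facts (invariant clopen sets of $\hat X$ give central projections of $\roeq(X)$) that it exploits elsewhere. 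One small caveat: your stated worry that the paper's invariant open sets ``need not be clopen'' is resolved in the paper precisely by taking closures in the extremally disconnected space $\beta X$, so the paper's route is not actually blocked --- but your workaround is valid and arguably more elementary.
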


\begin{proof}
Let $p\colon \hat X\to \nu X$ be the continuous surjection such that the canonical identification of $C(\nu X)$ with a \cstar-subalgebra of $C(\hat X)$ is given by the map
\[a\in C(\nu X)\mapsto a\circ p\in C(\hat X
).\]
Let $C\subseteq X$ be an infinite thin subset given by Lemma \ref{LemmaThinSets}. As $\hat C$ is the set of all nonprincipal ultrafilters on $C$ and $C$ is countable, we have that $|C|=2^{2^{\aleph_0}}$. Therefore, in order to obtain that $\nu X$ has $2^{2^{\aleph_0}}$ elements, it is enough to show that  $p$ is injective on $\hat C$.

Let $\omega,\omega'\in \hat C$ be distinct. By  Lemma \ref{LemmaSepHigCo}, there are disjoint  invariant open subsets $U,V\subseteq \hat X$   containing $\omega$ and $\omega'$, respectively. As $\beta X$ is extremely disconnected, $\bar U$ is clopen in $\hat X$ which implies that  the characteristic function of $\bar U$, $\chi_{\bar U}$, is a continuous function in $C(\hat X)$.  As $\bar U$ is invariant, Lemma \ref{LemmaInvariantCorona} shows that $\chi_{\bar U}\in C(\nu X)$. Therefore, since we clearly have $\chi_{\bar U}(\omega)=1$ and $\chi_{\bar U}(\omega')=0$, this shows that $p(\omega)\neq p(\omega')$. 
\end{proof}

\begin{remark}
It is interesting to notice that Theorem \ref{ThmSizeCorona} is only valid for \emph{metric} u.l.f.\  spaces. Precisely, Higson coronas can be defined more generally for \emph{coarse spaces} --- for brevity, we do not define coarse spaces here, the reader can find the precise definition in \cite{RoeBook}  or \cite[Section 5]{BaudierBragaFarahKhukhroVignatiWillett2021uRaRig}.  It is known that every perfectly normal compact Hausdorff space is homeomorphic to the Higson corona of some u.l.f.\ 
 coarse space (see \cite[Page 2]{BanakhProtaso2020TOpAppli}). It is however  not surprising that the Higson corona of nonmetrizable u.l.f.\ coarse spaces can be much smaller since there will be fewer Higson functions in this case. The proof of Theorem \ref{ThmSizeCorona} cannot hold outside the metrizable world since thin sets may not exist. For instance, if $\cE_{\max}$ is the maximal u.l.f.\ coarse structure on an infinite set $X$ (see \cite[Subsection 1.3]{BaudierBragaFarahKhukhroVignatiWillett2021uRaRig} for the precise definition), then it is clear that $(X,\cE_{\max})$ has no infinite thin subsets.
\end{remark}

\begin{corollary}\label{CorSize}
    Let $X$ be an infinite u.l.f.\ metric space, $h\colon X\to \R$ be a coarse map, and $\beta\in \R$. If there is a $(\sigma^\infty_h,\beta)$-KMS state on $\roeq(X)$ whose restriction to $C(\nu X)$ is faithful, then there are $2^{2^{\aleph_0}}$ extreme   $(\sigma^\infty_h,\beta)$-KMS states on $\roeq(X)$. In particular,  there are $2^{2^{\aleph_0}}$ extreme   $(\sigma_h,\beta)$-KMS states on $\cstu(X)$ which vanish on $\cK(\ell_2(X))$.
\end{corollary}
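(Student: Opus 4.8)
The plan is to read this off directly from the intermission machinery of Section \ref{SectionIntermission}, the size estimate of Theorem \ref{ThmSizeCorona}, and the affine identification of Proposition \ref{PropRoeqQStatesAffineIsoINTRO}. First I would set $A=\roeq(X)$ and $\sigma=\sigma_h^\infty$, and take $C=C(\nu X)$, which by Proposition \ref{PropHigsonCorCenter} is exactly the center $\cZ(\roeq(X))$; thus Assumption \ref{Assumption} is met and $\Omega(C)=\nu X$. The hypothesis supplies a $(\sigma_h^\infty,\beta)$-KMS state whose restriction to $C$ is faithful, so Theorem \ref{ThmMainIntermission} (equivalently the second part of Theorem \ref{ThmExtRoeqGENERAL}) applies: for every $x\in\nu X$ the set $K^x_\beta$ is nonempty, and the extreme points of $K_\beta$ split as the disjoint union $\bigsqcup_{x\in\nu X}\mathrm{Ext}(K^x_\beta)$. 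Since each nonempty $K^x_\beta$ is weak$^*$-compact and convex, Krein--Milman furnishes an extreme point of it, and extreme points lying over distinct $x$ are distinct, as they restrict to the distinct characters $x$ on $C$. Hence the number of extreme $(\sigma_h^\infty,\beta)$-KMS states is at least $|\nu X|$, which by Theorem \ref{ThmSizeCorona} is at least $2^{2^{\aleph_0}}$.

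For the matching upper bound I would invoke a routine cardinality count. A u.l.f.\ metric space is countable (fixing a basepoint, $X$ is the countable union of its finite balls), so $\cstu(X)\subseteq\cB(\ell_2(X))$ has cardinality at most $2^{\aleph_0}$, and the same bound passes to the quotient $\roeq(X)$. Every state is a norm-continuous functional, hence determined by its values on a norm-dense subset of cardinality at most $2^{\aleph_0}$, so the total number of states on $\roeq(X)$ is at most $(2^{\aleph_0})^{2^{\aleph_0}}=2^{2^{\aleph_0}}$. Combining with the lower bound gives exactly $2^{2^{\aleph_0}}$ extreme $(\sigma_h^\infty,\beta)$-KMS states on $\roeq(X)$.

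Finally, to transport the count to $\cstu(X)$ I would use Proposition \ref{PropRoeqQStatesAffineIsoINTRO}, which identifies the $(\sigma_h^\infty,\beta)$-KMS states on $\roeq(X)$ affinely with the $(\sigma_h,\beta)$-KMS states on $\cstu(X)$ that vanish on $\cK(\ell_2(X))$. An affine isomorphism carries extreme points bijectively to extreme points, so this set has exactly $2^{2^{\aleph_0}}$ extreme points. Moreover, positivity forces any convex decomposition inside $K_\beta$ of a compact-vanishing state to have compact-vanishing summands, so the compact-vanishing KMS states form a face of all $(\sigma_h,\beta)$-KMS states; consequently its extreme points are genuinely extreme among all KMS states. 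This yields the asserted $2^{2^{\aleph_0}}$ extreme $(\sigma_h,\beta)$-KMS states on $\cstu(X)$ vanishing on the compacts.

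The whole argument is essentially bookkeeping once the three cited results are in hand; the only place demanding genuine care is the upper bound, that is, confirming $2^{2^{\aleph_0}}$ is not merely a lower bound. This rests on the countability of u.l.f.\ spaces together with the density-character estimate for $\roeq(X)$, and on verifying that extremality is preserved both under the Krein--Milman selection over $\nu X$ and under the affine transfer of Proposition \ref{PropRoeqQStatesAffineIsoINTRO}.
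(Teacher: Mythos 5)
Your proposal is correct and follows the paper's own route: Theorem \ref{ThmExtRoeqGENERAL} (via Theorem \ref{ThmMainIntermission}) combined with Theorem \ref{ThmSizeCorona} gives the lower bound on extreme states of $\roeq(X)$, and Proposition \ref{PropRoeqQStatesAffineIsoINTRO} transfers the count to $\cstu(X)$. The extra verifications you supply --- the $2^{2^{\aleph_0}}$ upper bound from the countability of a u.l.f.\ space, and the observation that the compact-vanishing KMS states form a face so their extreme points are extreme among all KMS states --- are details the paper leaves implicit, and both are correct.
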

\begin{proof}
    The statement for the uniform Roe corona follows from   Theorems \ref{ThmExtRoeqGENERAL} and   \ref{ThmSizeCorona}. The statement for the uniform Roe algebra is then a consequence of Proposition \ref{PropRoeqQStatesAffineIsoINTRO}.
\end{proof}
 
\section{Applications: Branching trees}\label{SectionApli}
In this section, we apply the theory of KMS states on uniform Roe algebras developed above to $n$-branching trees. Recall that, as mentioned in the introduction, the choice for those spaces are, in a sense, very natural. Precisely, as explained in Subsection \ref{SubsectionIntroAppli}, as long as $h\colon X\to \R$ is such that $h(x)$ is bounded below by an affine map in terms of $d(x,x_0)$ for a given $x_0\in X$, there will always be $(\sigma_{h},\beta)$-KMS states on $\cstu(X)$ for all $\beta>0$ as long as $X$ has polynomial growth. Therefore, in order to find more interesting phase transitions, it is natural to look at metric spaces with exponential growth.

\subsection{$n$-branching trees}
Given a set $\Gamma$, we let $\Gamma^{<\infty}$ be the set of all finite words on $\Gamma$, including the empty word; which we denote by $\emptyset$. In other words,  if  $\gamma\in \Gamma^{<\infty}$, then either $\gamma=\emptyset$ or $\gamma=(\gamma_1,\ldots, \gamma_n)$ for some $n\in\N$ and some $\gamma_1,\ldots, \gamma_n\in \Gamma$. Given $\gamma\in \Gamma^{<\infty}$, if $\gamma=\emptyset$, we say that the \emph{length of $\gamma$} is $0$, if $\gamma=(\gamma_1,\ldots, \gamma_n)$, we say that the \emph{length of $\gamma$} is $n$; either way, we denote the length of $\gamma$ by $|\gamma|$ and we write $\gamma=(\gamma_1,\ldots, \gamma_{|\gamma|})$ (here it is understood that if $|\gamma|=0$, then $\gamma=\emptyset$). Given $\gamma,\gamma'\in \Gamma^{<\infty}$ we denote the \emph{concatenation of $\gamma$ and $\gamma'$} by $\gamma^\smallfrown \gamma'$, i.e., \[\gamma^\smallfrown \gamma'=(\gamma_1,\ldots, \gamma_{|\gamma|},\gamma'_1,\ldots, \gamma'_{|\gamma'|}).\]

 \begin{definition}
Let $n\in\N$ and consider $\Gamma=\{1,\ldots, n\}$. We make $\Gamma^{<\infty}$ into a graph by saying that any two distinct elements $\gamma,\gamma'\in \Gamma^{<\infty}$ are adjacent if there is $k\in \Gamma$ such that either $\gamma_1=\gamma_2^\smallfrown k$ or $\gamma_2=\gamma_1^\smallfrown k$. This defines a graph structure on $\Gamma^{<\infty}$ making it into a connected (undirected) graph. We can then see $\Gamma^{<\infty}$ as a metric space endowed with the shortest path distance. We call this metric space the \emph{$n$-branching tree} and denote it by $T_n$.  
 \end{definition}
 
For simplicity, we now isolate   the setting of this subsection.

\begin{assumption}\label{AssumptionBranchingTree}
Let  $n\in\N$ and let $T_n$ be the $n$-branching tree endowed with the shortest path metric, denoted by $d$. Let $h\colon T_n\to \R$ be the function given by  $h(x)=d(x,\emptyset)$  for all $x\in T_n$
\end{assumption} 
 
 \subsection{Strongly continuous KMS states on $\cstu(T_n)$}
 We start with a simple lemma about states on $\ell_\infty$. In the next lemma, $\ell_\infty=\ell_\infty(\N)$ and $c_0=c_0(\N)$.
 
\begin{lemma}\label{LemmaStrongCont}
    Let $\varphi$ be a state on $\ell_\infty$. If $\varphi\restriction c_0$ has norm 1, then $\varphi $ is strongly continuous.
\end{lemma}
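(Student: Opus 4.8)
The plan is to show that $\varphi$ coincides with its ``diagonal part'', which is manifestly strongly continuous. First I would set $w_n = \varphi(e_{n,n})$, which is nonnegative by positivity of $\varphi$, and define a functional $\psi$ on $\ell_\infty$ by $\psi(a) = \sum_n a_n w_n$. Since $\sum_{n\in F} w_n = \varphi(\chi_F) \le \varphi(1) = 1$ for every finite $F\subseteq \N$, the weights are summable and $\psi$ is a well-defined strongly continuous positive functional. Proving $\varphi = \psi$ is then exactly the assertion that $\varphi$ is strongly continuous.

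Next I would establish the two comparisons that pin $\varphi$ down. For positivity, I note that for $a\ge 0$ and finite $F$ the truncation $a\chi_F = \sum_{n\in F} a_n e_{n,n}$ satisfies $0 \le a\chi_F \le a$, so $\sum_{n\in F} a_n w_n = \varphi(a\chi_F) \le \varphi(a)$; letting $F$ increase along the finite subsets gives $\psi(a)\le\varphi(a)$, i.e.\ $\varphi-\psi\ge 0$. For agreement on $c_0$, I would use that every $a\in c_0$ is the norm limit of its truncations, $\|a-a\chi_F\|_\infty = \sup_{n\notin F}|a_n| \to 0$; boundedness of $\varphi$ then yields $\varphi(a) = \lim_F \varphi(a\chi_F) = \lim_F \sum_{n\in F} a_n w_n = \psi(a)$. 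Thus $\varphi\restriction c_0 = \psi\restriction c_0$.

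The hypothesis now enters through a norm computation. Under the duality $(c_0)^* = \ell_1$ the functional $\psi\restriction c_0$ is represented by the sequence $(w_n)$, so $\|\varphi\restriction c_0\| = \|\psi\restriction c_0\| = \sum_n w_n$. By assumption this equals $1$, whence $\psi(1) = \sum_n w_n = 1 = \varphi(1)$. Therefore $\varphi-\psi$ is a positive functional with $(\varphi-\psi)(1) = 0$, which forces $\|\varphi-\psi\| = (\varphi-\psi)(1) = 0$ and hence $\varphi = \psi$. Consequently $\varphi$ is strongly continuous, as desired.

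I expect the only genuinely delicate point to be the passage $\varphi\restriction c_0 = \psi\restriction c_0$ combined with the norm identity $\|\psi\restriction c_0\| = \sum_n w_n$: the whole argument hinges on recognizing that restricting a state of $\ell_\infty$ to $c_0$ records exactly the atomic part of the underlying measure on $\beta\N$, and that the hypothesis $\|\varphi\restriction c_0\| = 1$ says this atomic part already carries full mass, leaving nothing on the corona $\beta\N\setminus\N$. The remaining steps are routine.
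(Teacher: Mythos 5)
Your proof is correct and follows essentially the same route as the paper's: both hinge on the lower bound $\varphi(a)\ge\sum_n a_n\varphi(e_{n,n})$ for positive $a$ together with the fact that the hypothesis forces $\sum_n\varphi(e_{n,n})=1$, the paper packaging this as a two-sided squeeze on $\varphi(a)$ and $\varphi(1-a)$ while you package it as ``$\varphi-\psi$ is a positive functional vanishing at the unit.'' No gaps.
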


\begin{proof} Let  $(\ell_\infty)_+$  denote the subset of $\ell_\infty$ containing only sequences of positive numbers. Let $a=(a_k)_k\in (\ell_\infty)_+$ have norm at most 1. Then, as $\varphi$ is positive, we have that 
\begin{equation}\label{EqLemmaStateSOT1}
\varphi(  a)\geq \sum_{j=1}^k a_j\varphi(\chi_{\{j\}})\ \text{ for all }\ k\in\N.
\end{equation}
For the same reason, we have \[
\varphi(\chi_{\N}- a)\geq \sum_{j=1}^k (1- a_j)\varphi(\chi_{\{j\}})\ \text{ for all }\ k\in\N.
\]
As $\varphi\restriction c_0$ has norm 1, \[\lim_k\sum_{j=1}^k\varphi(\chi_{\{j\}})=\lim_k \varphi(\chi_{\{1,\dots,k\}})=1.\] Therefore, 
\begin{equation}\label{EqLemmaStateSOT2}
1-\varphi(  a)=\varphi(\chi_{\N})-\varphi(  a)=\varphi(\chi_{\N}-  a)\geq 1-\sum_{n=1}^ka_n\varphi(\chi_{\{n\}})
\end{equation}
for all $k\in\N$. Hence, \eqref{EqLemmaStateSOT1} and \eqref{EqLemmaStateSOT2} together imply that \[\varphi(  a)= \sum_{j=1}^\infty a_j\varphi(\chi_{\{j\}}).\] 

Now, for an arbitrary $a\in \ell_\infty$, splitting  $a$ into its positive and negative parts, the previous paragraph imply that $\varphi( a)= \sum_{j=1}^\infty a_j\varphi(\chi_{\{j\}})$, so the lemma follows. 
\end{proof}

 The next result is a partial version of Theorem \ref{ThmnBranchingTreeCOMPLETEINTRO}.

\begin{theorem}\label{ThmnBranchingTree}
In the setting of Assumption \ref{AssumptionBranchingTree}: Given  $\beta\in \R$, there is a  $(\sigma_{h},\beta)$-KMS state on $\cstu(T_n)$ if and only if   $\beta\geq \log(n)$. Moreover,

\begin{enumerate}
    \item \label{ThmnBranchingTreeItem2}
For $\beta> \log(n)$,   there is a unique $(\sigma_{h},\beta)$-KMS state $\varphi_\beta$ on $\cstu(T_n)$  and $\varphi_\beta$ is given by 
\[\varphi_\beta([a_{x,y}])=\sum_{y\in T_n} a_{y,y}\Big(e^{-\beta|y|}-ne^{-\beta (|y|+1)}\Big)\]
for all $[a_{x,y}]\in \cstu(T_n)$. 

\item \label{ThmnBranchingTreeItem3} For $\beta=\log(n)$, the $(\sigma_{h},\beta)$-KMS states on $\cstu(T_n)$ all vanish on $\cK(\ell_2(T_n))$. 
\end{enumerate}
\end{theorem}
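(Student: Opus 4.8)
The plan is to reduce all three assertions to one explicit computation of the diagonal weights $\varphi(e_{x,x})$ forced on any $(\sigma_h,\beta)$-KMS state $\varphi$. Since every such state factors through the diagonal (Theorem~\ref{ThmFactorsCondExpINTRO}), by Theorem~\ref{ThmSufficientKMS} it suffices to feed well-chosen partial translations into the identity $\varphi(\chi_{f(A)})=\varphi(\chi_A e^{\beta(\overline{h-h\circ f})})$. First I would use, for each $i\in\{1,\dots,n\}$, the child map $c_i\colon T_n\to B_i$ given by $c_i(x)=x^\smallfrown i$, where $B_i$ is the set of nonempty words ending in $i$; as $h(x)-h(c_i(x))=-1$ identically, the identity yields $\varphi(\chi_{B_i})=e^{-\beta}$. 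Summing over $i$ and using $T_n\setminus\{\emptyset\}=\bigsqcup_{i} B_i$ gives $\varphi(e_{\emptyset,\emptyset})=1-ne^{-\beta}$. Feeding in the singleton partial translations $\{\emptyset\}\to\{x\}$ then produces $\varphi(e_{x,x})=e^{-\beta|x|}\varphi(e_{\emptyset,\emptyset})=e^{-\beta|x|}(1-ne^{-\beta})$ for every $x\in T_n$.

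The threshold is immediate from this: since $e_{\emptyset,\emptyset}\ge 0$, positivity of $\varphi$ forces $1-ne^{-\beta}\ge 0$, i.e.\ $\beta\ge\log n$, so no KMS state exists for $\beta<\log n$. For $\beta>\log n$ one has $Z(\beta)=\sum_{k\ge 0}(ne^{-\beta})^{k}<\infty$, so Theorem~\ref{ThmStrongContKMSINTRO} already provides a strongly continuous KMS state. For uniqueness I would note that $\sum_{x\in T_n}\varphi(e_{x,x})=(1-ne^{-\beta})Z(\beta)=1$ for \emph{any} KMS state $\varphi$; hence the strongly continuous functional $\psi$ extracted from $\varphi$ in Proposition~\ref{PropKMSStateVanishComp} satisfies $\psi(\chi_{T_n})=1=\varphi(\chi_{T_n})$, and $0\le\psi\le\varphi$ forces $\varphi=\psi$. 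Thus $\varphi$ is strongly continuous, so by Theorem~\ref{ThmStrongContKMSINTRO} it is unique, and the rearrangement $Z(\beta)^{-1}e^{-\beta|y|}=(1-ne^{-\beta})e^{-\beta|y|}=e^{-\beta|y|}-ne^{-\beta(|y|+1)}$ recovers the displayed formula. This settles \eqref{ThmnBranchingTreeItem2}.

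For \eqref{ThmnBranchingTreeItem3}, at the critical value $\beta=\log n$ the same formula gives $\varphi(e_{x,x})=e^{-\beta|x|}(1-1)=0$ for all $x$. Cauchy--Schwarz for the state $\varphi$ then gives $|\varphi(e_{x,y})|^2\le\varphi(e_{x,y}^*e_{x,y})=\varphi(e_{y,y})=0$, so $\varphi$ annihilates every matrix unit, hence every finite-rank operator, and by continuity all of $\cK(\ell_2(T_n))$. Thus every $(\sigma_h,\log n)$-KMS state vanishes on the compacts.

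The one remaining point, and the step I expect to be the main obstacle, is the \emph{existence} of a KMS state at $\beta=\log n$. Here I would take a weak$^*$-cluster point $\varphi_{\log n}$ of the Gibbs states $(\varphi_\beta)$ as $\beta\downarrow\log n$; weak$^*$-compactness of the state space guarantees it exists and is a state. To see it is KMS I would verify the identity of Theorem~\ref{ThmSufficientKMS} at $\beta=\log n$ by passing to the limit. Writing $D_\beta=\chi_A e^{\beta(\overline{h-h\circ f})}$ for a fixed partial translation $f$, the diagonal entries of $D_\beta$ are uniformly bounded and $D_\beta\to D_{\log n}$ in norm as $\beta\downarrow\log n$; the delicate point is that both the state and the operator vary with $\beta$, which I would handle by the splitting $|\varphi_\beta(D_\beta)-\varphi_{\log n}(D_{\log n})|\le\|D_\beta-D_{\log n}\|+|(\varphi_\beta-\varphi_{\log n})(D_{\log n})|$, whose first term vanishes by norm convergence (using $\|\varphi_\beta\|=1$) and whose second vanishes by weak$^*$-convergence on the fixed element $D_{\log n}$. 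Passing $\varphi_\beta(\chi_{f(A)})=\varphi_\beta(D_\beta)$ to the limit and invoking Theorem~\ref{ThmSufficientKMS} certifies $\varphi_{\log n}$ as a $(\sigma_h,\log n)$-KMS state, completing the equivalence.
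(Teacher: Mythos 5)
Your proposal is correct and runs essentially parallel to the paper's proof: both force the diagonal weights $\varphi(e_{x,x})=e^{-\beta|x|}(1-ne^{-\beta})$ out of the KMS identity for concatenation-type partial translations, read off the threshold $\beta\ge\log(n)$ from positivity, deduce strong continuity for $\beta>\log(n)$ from the fact that these weights sum to $1$, and get item \eqref{ThmnBranchingTreeItem3} from the vanishing of the weights at $\beta=\log(n)$. The one genuine divergence is the existence of a KMS state at the critical value: the paper simply cites the general fact that the set of inverse temperatures admitting KMS states is closed (\cite[Proposition 5.3.25]{BratteliRobinsonBookII1997}), whereas you prove it by hand, taking a weak$^*$-cluster point of the Gibbs states $\varphi_\beta$ as $\beta\downarrow\log(n)$ and verifying the criterion of Theorem \ref{ThmSufficientKMS} by a norm/weak$^*$ splitting; your argument is self-contained and correct (the norm convergence $D_\beta\to D_{\log n}$ holds because $h-h\circ f$ is uniformly bounded on the domain of a partial translation), at the cost of one small point you should make explicit: Theorem \ref{ThmSufficientKMS} certifies $(\varphi_{\log n}\restriction\ell_\infty(T_n))\circ E$ as a KMS state, so you must also note that the cluster point itself factors through $E$ --- which is immediate since each $\varphi_\beta$ does and this property passes to weak$^*$ limits. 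Your other small variations (Cauchy--Schwarz to pass from vanishing on the diagonal to vanishing on all of $\cK(\ell_2(T_n))$, and using Proposition \ref{PropKMSStateVanishComp} plus $(\varphi-\psi)(1)=0$ for uniqueness instead of Lemma \ref{LemmaStrongCont}) are equally valid routes to the same conclusions.
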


\begin{proof}
Suppose $\varphi$ is a $(\sigma_{h},\beta)$-KMS state on $\cstu(T_n)$. Notice that, for each $y\in T_n$, the map $f\colon T_n\to T_n$ given  by  $f(x)=x^{\smallfrown}y$, for all $x\in T_n$, is a partial translation; indeed, $d(x,f(x))=|y|$ for all $x\in T_n$. So, each $v_f$ belongs to $\cstu[T_n]$. Then, for each $y\in T_n$, we have 
\begin{align*}
\sigma_{h,i\beta}(v_f^*)&=e^{-\beta\bar h}v_f^*e^{\beta\bar h}\\
&=e^{-\beta\bar h}\Big(\SOTh\sum_{x\in X}e_{x,x^\smallfrown y}\Big)e^{\beta\bar h}\\
&=e^{\beta(d(x^\smallfrown y,\emptyset)-d( x,\emptyset))}v_f^*\\
&=e^{\beta|y|}v_f^*.
\end{align*}

 For each $y\in T_n$, set 
\[T_n^\smallfrown y=\{x\in T_n\mid x=z^\smallfrown y\text{ for some }z\in T_n\}.\]
Hence, as $\chi_{T_n}=v_f^*v_f$ and $\chi_{T_n^\smallfrown y}=v_fv_f^*$, we must have \[1=\varphi(\chi_{T_n})=\varphi(v_f^*v_f)=
\varphi(v_f\sigma_{h,i\beta}(v_f^*))=e^{\beta |y|}\varphi(\chi_{T_n^\smallfrown y})\]
for all $y\in T_n$; which implies
 \begin{equation}\label{EqVarphiAy}\varphi(\chi_{T_n^\smallfrown y})=e^{-\beta |y|} \ \text{ for all } \ y\in T_n.
 \end{equation}
Since for each $y\in T_n$, we have  \[\{y\}=T_n^\smallfrown y\setminus \bigcup_{k=1}^n T_n^\smallfrown k^\smallfrown y,\]
\eqref{EqVarphiAy} implies that 
\begin{equation}\label{EqFormulaVarphi}
\varphi(e_{y,y})=\varphi\Big(\chi_{T_n^\smallfrown y}-\sum_{k=1}^n\chi_{T_n^\smallfrown k^\smallfrown y}\Big)=e^{-\beta|y|}-ne^{-\beta (|y|+1)}
\end{equation}
for all $y\in T_n$. 

As $\varphi$ is positive, each $\varphi(e_{y,y})$ must be positive.  So, \eqref{EqFormulaVarphi}  gives that 
\[e^{-\beta|y|}\geq ne^{-\beta (|y|+1)}\ \text{ 
for all }\ y\in T_n.\]  Solving for $\beta$, this implies either $\beta=0$ or $\beta\geq \log(n)$.  If $n=1$, then $\log(n)=0$; so $\beta\geq \log(n)$. If $n>1$, then $T_n$ is not amenable; so $\cstu(T_n)$ has no positive unital traces (see \cite[Theorem 4.6]{RoeBook}). Therefore, there are no $(\sigma_{h},0)$-KMS states on $\cstu(T_n)$. In either case, we conclude that  $\beta\geq \log(n)$. Moreover, as \eqref{EqFormulaVarphi} must hold regardless of $\beta$, this also shows that the $(\sigma_{h},\log(n))$-KMS states on $\cstu(T_n)$ all vanish on $c_0(T_n)$. Since such states factors through $\ell_\infty(T_n)$ (Theorem \ref{ThmFactorsCondExpINTRO}),  \eqref{ThmnBranchingTreeItem3} follows. 

We must now show that if  $\beta\geq \log(n)$, then   $(\sigma_{h},\beta)$-KMS states exist. This will however be an immediate consequence of   \eqref{ThmnBranchingTreeItem2}. Indeed,   the set of all $\beta$'s for with $(\sigma_{h},\beta)$-KMS states exist is always  a closed set (see \cite[Proposition 5.3.25]{BratteliRobinsonBookII1997}).

We now show \eqref{ThmnBranchingTreeItem2} holds.  For this, suppose  $\beta>\log(n)$ and let us  show that any given $(\sigma_h,\beta)$-KMS state $\varphi $ must have the required form. Notice that  $\varphi \restriction \ell_\infty(T_n)$ is a state on $\ell_\infty(T_n)$. Moreover,   the computations above show that
\begin{equation}\label{Eqc0}
\varphi (a)=\sum_{y\in T_n} a_{y}\Big(e^{-\beta|y|}-ne^{-\beta (|y|+1)}\Big)
\end{equation}
for all $a=(a_y)_y\in c_0(T_n)$. Hence, 
an easy computation gives
\[\lim_{F,\cF}\varphi(\chi_F)=1,\]
where $\cF$ is the net of all finite subsets of $T_n$ ordered by reverse inclusion. Therefore, it follows that  $\|\varphi\restriction c_0(T_n)\|=1$ and, by Lemma \ref{LemmaStrongCont}, $\varphi\restriction\ell_\infty$ is strongly continuous.  This implies that  \eqref{Eqc0} holds for all   $a=(a_y)_y\in \ell_\infty(T_n)$. In other to notice that this holds for arbitrary elements of $\cstu(T_n)$,  let $E\colon \cstu(X)\to \ell_\infty(X)$ be the canonical conditional expectation and recall that,  by Theorem \ref{ThmFactorsCondExpINTRO}, we have $\varphi=\varphi\circ E$. This proves the uniqueness part of \eqref{ThmnBranchingTreeItem2}.

We are left to notice that a $\varphi$ given by the formula above is indeed a $(\sigma_{h},\beta)$-KMS state on $\cstu(T_n)$.  This will be done by using Theorem \ref{ThmSufficientKMS}.\footnote{Equivalently, this could also be done using Theorem \ref{ThmStrongContKMSINTRO}, but the computations would not be shorter.} So, let $f\colon A\to f(A)$ be partial translation on $X$. On one hand, we have that 
\[\varphi(\chi_{f(A)})=\sum_{y\in f(A)} \Big(e^{-\beta|y|}-ne^{-\beta(|y|+1)}\Big).\]
On the other hand,
 \begin{align*}
 \varphi\Big(\chi_Ae^{\beta(\overline{ h-h\circ f})}\Big)&=\sum_{x\in A} e^{\beta(|x|-|f(x)|)}\Big(e^{-\beta|x|}-ne^{-\beta(|x|+1)}\Big)\\
&=\sum_{x\in A}  \Big(e^{-\beta|f(x)|}-ne^{-\beta(|f(x)|+1)}\Big).
\end{align*}
The change of variable $y=f(x)$  give us   
\[\varphi(\chi_{f(A)})=\varphi\Big(\chi_A e^{\beta(\overline{ h-h\circ f})}\Big).\]
As $\varphi=\varphi\circ E$,  Theorem \ref{ThmSufficientKMS} gives us that $\varphi$ is a $(\sigma_{h},\beta)$-KMS state on $\cstu(X)$.
\end{proof}

\subsection{KMS states on $\cstu(T_n)$ vanishing on compacts}
In order to complete the proof of Theorem \ref{ThmnBranchingTreeCOMPLETEINTRO}, we must further analyze  the case $\beta=\log(n)$. According to Theorem \ref{ThmnBranchingTree}, the KMS states for this inverse temperature will all vanish on the ideal of compact operators and we can then make use of the material of Section \ref{SectionURCandHC}. Moreover,  ideas in   \cite[Lemma 3]{Chou1969PAMS} will also be extremely useful in order to compute to the precise cardinality of the set of extreme $(\sigma_h,\beta)$-KMS states on $\cstu(T_n)$.

\subsubsection{Precise cardinality of the set of KMS states on $\cstu(T_n)$ for $\beta=\log(n)$} We start by setting up some notation.  Given $y\in T_n$,   consider the map
 \begin{align*}\tilde y\colon T_n&\to \beta T_n\\
 x&\mapsto x^\smallfrown y.
 \end{align*}
 Then, by the defining property of $\beta T_n$, $\tilde y$ can be extended to a  continuous map $\beta T_n\to \beta T_n$ which, by abuse of notation, we still denote by $\tilde y$. Notice that  
 \[\overline{\tilde y(A)}=\tilde y(\bar A) \ \text{ for all }\ A\subseteq T_n,\]
where the closures above are taken in $\beta T_n$ (see Lemma \cite[Lemma 2.1]{Chou1969PAMS}). We call a subset $A\subseteq \beta T_n$ \emph{right-invariant}\footnote{The reader is invited to compare this notion with Definition \ref{DefiInvariant} above. Notice that this notion is weaker since we only consider partial translations of $T_n$ given by adding a letter to the right, but not by deleting one.} if \[\tilde y(A)\subseteq A\ \text{ for all }\ y\in T_n.\]

 The following is a particular case of   \cite[Lemma 2 and Proposition 4.1]{Chou1969PAMS}.\footnote{Equivalently, this could be obtained as in Lemma \ref{LemmaSepHigCo} above.}

\begin{lemma}\label{LemmaChou}
 Given $n\in\N$,  $\beta T_n$ contains  at least $2^{2^{\aleph_0}}$ nonempty, mutually disjoint, closed, invariant subsets.\footnote{In \cite{Chou1969PAMS}, Chou works with semigroups, but this is precisely what $T_n$ is endowed with the products $x\ast y=x^\smallfrown y$.}
\end{lemma}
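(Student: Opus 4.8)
The plan is to exhibit a family of $2^{2^{\aleph_0}}$ pairwise disjoint, nonempty, closed, right-invariant subsets of $\beta T_n$, realizing the strategy indicated in the footnote via Lemma \ref{LemmaSepHigCo}. Since the cellularity of $\hat{T_n}$ is only $2^{\aleph_0}$, such a large disjoint family cannot consist of \emph{open} sets, so I would use orbit closures, which are automatically closed and right-invariant but typically not open. Concretely, for $n\geq 2$ I would first fix an infinite \emph{antichain} $C=\{c_i\mid i\in\N\}\subseteq T_n$ (no $c_i$ a prefix of $c_j$) which is moreover thin, strengthening Lemma \ref{LemmaThinSets}. Taking $c_i$ to be the word consisting of $i$ ones, then a single $2$, then $i$ ones works: distinct $c_i,c_j$ diverge at coordinate $\min(i,j)+1$ (hence form an antichain), while $d(c_i,c_j)=2\max(i,j)+2$ grows, so any partial translation $f$ of bounded displacement $r$ can satisfy $f(c_i)=c_j\in C$ only when $\max(i,j)\leq(r-2)/2$, forcing $f[C]\cap C$ to be finite.

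For each $\omega\in\hat C$ I would set $K_\omega=\overline{\{\tilde y(\omega)\mid y\in T_n\}}$, the closure in $\beta T_n$ of the right-orbit of $\omega$. This set is nonempty, as it contains $\omega=\tilde{\emptyset}(\omega)$, and since every $A\in\omega$ is infinite, each $\tilde y(\omega)$ is nonprincipal, so $K_\omega\subseteq\hat{T_n}$. Right-invariance follows from the relation $\tilde z\circ\tilde y=\widetilde{y^\smallfrown z}$ together with the continuity of each $\tilde z$: the orbit is carried into itself, hence so is its closure. Thus each $K_\omega$ is a nonempty, closed, right-invariant subset of $\hat{T_n}$, the invariance being exactly the semigroup invariance appearing in Chou's result.

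The heart of the argument is disjointness. Given distinct $\omega,\omega'\in\hat C$, I would pick $A\subseteq C$ with $A\in\omega$ and $A\notin\omega'$, so that $A\in\omega$ while $C\setminus A\in\omega'$. Because $C$ is an antichain, the cones $A^\smallfrown T_n=\bigcup_{a\in A}a^\smallfrown T_n$ and $(C\setminus A)^\smallfrown T_n$ are genuinely \emph{disjoint} subsets of $T_n$. Since $\tilde y(\omega)$ contains $A^\smallfrown y\subseteq A^\smallfrown T_n$ for every $y$, one gets $K_\omega\subseteq\overline{A^\smallfrown T_n}$, and likewise $K_{\omega'}\subseteq\overline{(C\setminus A)^\smallfrown T_n}$. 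Using the Stone-space identity $\overline S\cap\overline{S'}=\overline{S\cap S'}$ valid in $\beta T_n$, disjointness of the cones yields $\overline{A^\smallfrown T_n}\cap\overline{(C\setminus A)^\smallfrown T_n}=\emptyset$, whence $K_\omega\cap K_{\omega'}=\emptyset$. As $C$ is countably infinite, $|\hat C|=2^{2^{\aleph_0}}$, so $\{K_\omega\mid\omega\in\hat C\}$ is the desired family.

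I expect the only real obstacle to lie in the choice of $C$: one needs a single set that is simultaneously thin (to control the coarse structure) and an antichain (so that distinct cones are \emph{exactly} disjoint and the Stone identity collapses their intersection to $\emptyset$ rather than to a merely finite set); the explicit sequence above settles this for $n\geq 2$. The case $n=1$ is genuinely different, since $T_1\cong(\N,+)$ is totally ordered by the prefix relation and admits no infinite antichain — there cones are cofinal tails that always overlap, so the orbit-closure construction collapses. For $n=1$ I would instead invoke Chou's original result for the semigroup $(\N,+)$ (\cite[Lemma 2 and Proposition 4.1]{Chou1969PAMS}), of which the present statement is a particular case.
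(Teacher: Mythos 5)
Your argument is correct, but it is genuinely different from what the paper does: the paper offers no proof at all for Lemma \ref{LemmaChou}, simply citing \cite[Lemma 2 and Proposition 4.1]{Chou1969PAMS} (with a footnote that one could alternatively argue as in Lemma \ref{LemmaSepHigCo}), whereas you give a self-contained construction. Your orbit-closure sets $K_\omega=\overline{\{\tilde y(\omega)\mid y\in T_n\}}$ are nonempty, closed, and right-invariant by the identity $\tilde z\circ\tilde y=\widetilde{y^\smallfrown z}$ plus continuity, and this is exactly the notion of invariance the paper uses later in the proof of Theorem \ref{ThmSemiG}; the disjointness argument is also sound, since two prefixes of the same word are always comparable, so the antichain property makes the cones $A^\smallfrown T_n$ and $(C\setminus A)^\smallfrown T_n$ literally disjoint, and the Stone identity $\bar S\cap\bar S'=\overline{S\cap S'}$ in $\beta T_n$ finishes the job. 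What your route buys is an explicit, checkable proof of the special case actually needed, at the cost of a case split: for $n=1$ there is no infinite antichain and you must fall back on Chou for $(\N,+)$, so the proof is not fully self-contained there. One minor point: the thinness of $C$ plays no role in your argument --- only the antichain property is used (thinness is what Lemma \ref{LemmaSepHigCo} needs to produce invariant \emph{open} separating sets, a different mechanism than your closed orbit closures) --- so the parenthetical about needing $C$ ``simultaneously thin and an antichain'' overstates what the construction requires.
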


\begin{theorem}\label{ThmSemiG}
    In the setting of Assumption \ref{AssumptionBranchingTree}: If  $\beta=\log(n)$, then  there are $2^{2^{\aleph_0}}$ extreme  $(\sigma_h,\beta)$-KMS states on $\cstu(T_n)$.
\end{theorem}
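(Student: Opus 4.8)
The plan is to push everything down to the uniform Roe corona and then feed it into the abstract machinery of Section \ref{SectionIntermission}. By Theorem \ref{ThmnBranchingTree}, at $\beta=\log(n)$ every $(\sigma_h,\log(n))$-KMS state on $\cstu(T_n)$ vanishes on $\cK(\ell_2(T_n))$, so by Proposition \ref{PropRoeqQStatesAffineIsoINTRO} the assignment $\psi\mapsto\psi\circ\pi$ is an affine isomorphism from the $(\sigma_h^\infty,\log(n))$-KMS states on $\roeq(T_n)$ onto all $(\sigma_h,\log(n))$-KMS states on $\cstu(T_n)$, and in particular it matches up extreme points. Since $\ell_2(T_n)$ is separable, $\cstu(T_n)$ has cardinality at most $2^{\aleph_0}$ and hence carries at most $2^{2^{\aleph_0}}$ states, which already gives the upper bound. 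Thus the entire content is the lower bound, and by Corollary \ref{CorSize} together with Theorem \ref{ThmSizeCorona} (which supplies $|\nu T_n|\geq 2^{2^{\aleph_0}}$) it suffices to produce a single $(\sigma_h^\infty,\log(n))$-KMS state on $\roeq(T_n)$ whose restriction to the center $C(\nu T_n)=\cZ(\roeq(T_n))$ is faithful.

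To build candidate states I would start from the explicit strongly continuous Gibbs states $\varphi_\beta$ of Theorem \ref{ThmnBranchingTree} for $\beta>\log(n)$ and pass to a weak$^*$-cluster point as $\beta\downarrow\log(n)$. Writing $q=ne^{-\beta}$, the formula for $\varphi_\beta$ gives, on a diagonal $a=(a_y)_y$, the identity $\varphi_\beta(a)=(1-q)\sum_{k\geq 0}q^k\big(n^{-k}\sum_{|y|=k}a_y\big)$, so any cluster point $\psi$ evaluates a central (Higson) element as an Abel limit of the level averages $n^{-k}\sum_{|y|=k}a_y$. Because $h-h\circ f$ is bounded for each partial translation $f$, the scaling relations of Theorem \ref{ThmSufficientKMS} pass to the limit, so $\psi$ is a genuine $(\sigma_h^\infty,\log(n))$-KMS state (vanishing on the compacts). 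A direct computation shows $\psi$ charges every cone, giving the descendant set $\{w:w\text{ extends }v\}$ mass $n^{-|v|}$. Since a subset of $T_n$ has finite edge boundary (equivalently $\chi_A\in C_h(T_n)$) exactly when it is coarsely a finite union of subtrees, the projections of $C(\nu T_n)$ are, modulo $c_0(T_n)$, precisely the finite unions of cones; hence $\psi$ is already faithful on the sub-$C^*$-algebra generated by the cones, which is isomorphic to $C([T_n])$. By Theorem \ref{ThmExtRoeqGENERAL} this alone yields $2^{\aleph_0}$ extreme states, so the cone algebra is not enough and one must reach the full center.

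The main obstacle is upgrading faithfulness from the cone algebra to all of $C(\nu T_n)$, and here the naive Gibbs limit genuinely fails: one can exhibit a nonnegative Higson function $f$ with $\limsup_{x\to\infty}f(x)>0$ whose level averages $n^{-k}\sum_{|y|=k}f(y)$ tend to $0$ — take $f$ supported on far-apart, slowly decaying ``diamonds'' around nodes placed at rapidly increasing levels, so that $f$ is honestly slowly varying yet its mass is a vanishing fraction of each level — and for such $f$ one gets $\psi(f)=0$. A faithful-on-center KMS state must therefore also see these thin directions. This is precisely where Chou's analysis enters: by Theorem \ref{ThmSufficientKMS} the KMS condition at $\beta=\log(n)$ is the scaling relation $\varphi(\chi_{\tilde y(A)})=n^{-|y|}\varphi(\chi_A)$ involving only the right translations $\tilde y\colon x\mapsto x^\smallfrown y$, so the $\log(n)$-KMS states are exactly the corresponding relatively invariant means on $\ell_\infty(T_n)$. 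Using Lemma \ref{LemmaChou} and the structure of the extreme invariant means on the free semigroup $T_n$, I would construct a relatively invariant mean whose restriction to $C(\nu T_n)$ has full support, combining the Gibbs limit with means concentrated on Chou's disjoint right-invariant sets so that no nonzero positive central element is annihilated. Proving that some such combination is simultaneously KMS and faithful on the non-zero-dimensional, non-separable center $C(\nu T_n)$ is the technical heart of the argument, and is the step I expect to be hardest.

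Granting a $(\sigma_h^\infty,\log(n))$-KMS state faithful on $C(\nu T_n)$, Corollary \ref{CorSize} — that is, Theorem \ref{ThmMainIntermission} applied with $C=C(\nu T_n)$, whose spectrum has at least $2^{2^{\aleph_0}}$ points by Theorem \ref{ThmSizeCorona} — produces $2^{2^{\aleph_0}}$ extreme $(\sigma_h^\infty,\log(n))$-KMS states on $\roeq(T_n)$, mutually distinct because they take distinct values at distinct characters of the center. Transporting them through the affine isomorphism of Proposition \ref{PropRoeqQStatesAffineIsoINTRO} yields $2^{2^{\aleph_0}}$ extreme $(\sigma_h,\log(n))$-KMS states on $\cstu(T_n)$, and together with the cardinality upper bound from the first paragraph this gives exactly $2^{2^{\aleph_0}}$.
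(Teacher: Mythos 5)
Your reduction to the corona and your instinct that the whole problem is the lower bound are both fine, but the route you choose cannot be completed: you propose to verify the hypothesis of Corollary \ref{CorSize}, i.e.\ to produce a $(\sigma_h^\infty,\log n)$-KMS state on $\roeq(T_n)$ whose restriction to the \emph{full} center $C(\nu T_n)$ is faithful. No such state exists --- indeed $C(\nu T_n)$ admits no faithful state at all, KMS or otherwise. To see this, take an infinite thin set $C\subseteq T_n$ (Lemma \ref{LemmaThinSets}) and an almost disjoint family $(A_\alpha)_{\alpha<2^{\aleph_0}}$ of infinite subsets of $C$; the argument of Lemma \ref{LemmaSepHigCo} together with Proposition \ref{PropThinSets} produces pairwise disjoint invariant open sets $U_\alpha\subseteq \hat X$, whose closures are disjoint clopen invariant sets (by extremal disconnectedness of $\beta T_n$), and Lemma \ref{LemmaInvariantCorona} puts the projections $\chi_{\bar U_\alpha}$ inside $C(\nu T_n)$. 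A state can give nonzero mass to at most countably many pairwise orthogonal projections, so it must kill some of them. Thus the step you yourself flag as ``the technical heart'' is not merely hard but impossible, and the heuristic counterexample you sketch for the Gibbs limit (slowly varying Higson functions whose level averages vanish) is a symptom of this structural obstruction, not something a cleverer invariant mean can repair. (For the same reason, your intermediate claim that the cone algebra route ``yields $2^{\aleph_0}$ extreme states'' via Theorem \ref{ThmExtRoeqGENERAL} is the most that faithfulness-based arguments can deliver here: Corollary \ref{CorTnFaithfulQn} gives faithfulness only on $C([T_n])$, whose spectrum has cardinality $2^{\aleph_0}$.)

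The paper's proof avoids faithfulness on the center entirely. It invokes Chou's lemma (Lemma \ref{LemmaChou}) to obtain $2^{2^{\aleph_0}}$ pairwise disjoint nonempty closed right-invariant subsets $L_j\subseteq\beta T_n$, and for each $j$ it manufactures a $(\sigma_h,\log n)$-KMS state supported in $L_j$ by ``translating'' one fixed KMS state $\varphi$ by a point $\omega\in L_j$: one sets $\psi(a)=\varphi(\tilde a)$ where $\tilde a(y)=\bar a(\tilde y(\omega))$, and checks via Theorem \ref{ThmSufficientKMS} (in the form of Lemma \ref{LemmaSufficientKMSTn}) that $\psi$ still satisfies the scaling relations. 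The sets $K_\beta^{L_j}$ of KMS states supported in $L_j$ are then pairwise disjoint, nonempty, convex and weak$^*$-compact, so Krein--Milman supplies an extreme point in each, giving $2^{2^{\aleph_0}}$ extreme KMS states. If you want to salvage your write-up, replace the ``faithful on $C(\nu T_n)$'' step by this translation construction (or some other device that distributes KMS states over $2^{2^{\aleph_0}}$ disjoint invariant supports); the rest of your framework, including the trivial cardinality upper bound, can stay.
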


Before proving Theorem \ref{ThmSemiG}, let us isolate an easy lemma for further reference. Precisely, the next result is simply a more specialized version of Theorem \ref{ThmSufficientKMS}. 

\begin{lemma}\label{LemmaSufficientKMSTn}
  In the setting of Assumption \ref{AssumptionBranchingTree}:  Suppose $\varphi$ is a state on $\ell_\infty(T_n)$ such that 
     \begin{equation}\label{EqVarphiSufficient9}
        \varphi(\chi_{\tilde y(A)} )
=\varphi\Big(\chi_A e^{\beta (\overline{ h-h\circ \tilde y})} \Big)
\end{equation}
for all  $y\in T_n$ and all $A\subseteq T_n$. Then,   $\varphi\circ E$ is a  $(\sigma_{h},\beta)$-KMS state on $\cstu(T_n)$; where $E\colon \cstu(T_n)\to \ell_\infty(T_n)$ is the canonical conditional expectation. 
\end{lemma}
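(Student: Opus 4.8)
The plan is to reduce everything to Theorem \ref{ThmSufficientKMS}, whose hypothesis demands the identity \eqref{EqVarphiSufficient} for \emph{every} partial translation of $T_n$, whereas \eqref{EqVarphiSufficient9} only supplies it for the right-append maps $\tilde y$. So the entire task is to upgrade the special hypothesis to the full one. First I would record what \eqref{EqVarphiSufficient9} says concretely: since $h(x)=|x|$ and $|\tilde y(x)|=|x^\smallfrown y|=|x|+|y|$, we have $(h-h\circ\tilde y)(x)=-|y|$ on all of $T_n$, so the hypothesis reads $\varphi(\chi_{\tilde y(A)})=e^{-\beta|y|}\varphi(\chi_A)$ for every $y\in T_n$ and every $A\subseteq T_n$.

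Next I would decompose an arbitrary partial translation $f\colon A\to f(A)$, with $r=\sup_{x\in A}d(x,f(x))<\infty$. For $x\in A$ let $z_x$ be the longest common prefix (meet) of $x$ and $f(x)$ in the tree, and write $x=z_x{}^\smallfrown u_x$ and $f(x)=z_x{}^\smallfrown w_x$; then $|u_x|+|w_x|=d(x,f(x))\le r$. As the alphabet $\{1,\dots,n\}$ is finite, only finitely many pairs $(u_x,w_x)$ arise, so grouping $A$ by the value of this pair yields a finite partition $A=\bigsqcup_{(u,w)}A_{u,w}$, where on each block $f$ acts by the single rule $z^\smallfrown u\mapsto z^\smallfrown w$. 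Setting $Z_{u,w}=\{z_x\mid x\in A_{u,w}\}$, injectivity of right-append gives $A_{u,w}=\tilde u(Z_{u,w})$ and $f(A_{u,w})=\tilde w(Z_{u,w})$.

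The heart of the matter is then a short check on each block: applying the concrete form of \eqref{EqVarphiSufficient9} with $y=w$ and with $y=u$, and using that $h-h\circ f$ is the constant $|u|-|w|$ on $A_{u,w}$, I would compute
\begin{align*}
\varphi(\chi_{f(A_{u,w})})&=\varphi(\chi_{\tilde w(Z_{u,w})})=e^{-\beta|w|}\varphi(\chi_{Z_{u,w}})\\
&=e^{\beta(|u|-|w|)}e^{-\beta|u|}\varphi(\chi_{Z_{u,w}})=e^{\beta(|u|-|w|)}\varphi(\chi_{\tilde u(Z_{u,w})})\\
&=\varphi\Big(\chi_{A_{u,w}}e^{\beta(\overline{h-h\circ f})}\Big),
\end{align*}
which is exactly \eqref{EqVarphiSufficient} on the block $A_{u,w}$. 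Finally I would sum over the finitely many blocks: since $f$ is injective the images $f(A_{u,w})$ are disjoint, so $\chi_{f(A)}=\sum\chi_{f(A_{u,w})}$, while $\chi_A e^{\beta(\overline{h-h\circ f})}=\sum\chi_{A_{u,w}}e^{\beta(\overline{h-h\circ f})}$; linearity of $\varphi$ then yields \eqref{EqVarphiSufficient} for $f$. As $f$ was arbitrary, Theorem \ref{ThmSufficientKMS} gives that $\varphi\circ E$ is a $(\sigma_h,\beta)$-KMS state.

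I expect the only genuine obstacle to be making the tree decomposition precise — in particular verifying that each block $A_{u,w}$ is governed by a single delete-then-append rule and equals $\tilde u(Z_{u,w})$ for its prefix set $Z_{u,w}$ — since once the geometry is pinned down the verification is routine and uses nothing beyond the explicit value $h(x)=|x|$.
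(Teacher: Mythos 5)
Your proof is correct and follows essentially the same route as the paper: the paper likewise reduces to Theorem \ref{ThmSufficientKMS} by decomposing an arbitrary partial translation of $T_n$ into finitely many pieces, each a composition of an ``append a suffix'' map $\tilde y$ (covered by the hypothesis) with the inverse of such a map (covered by inverting the hypothesis, exactly as in your chain of equalities through $Z_{u,w}$). The only difference is cosmetic: the paper asserts the decomposition without proof, whereas you make it explicit via longest common prefixes and the uniform bound $|u_x|+|w_x|\le r$.
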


\begin{proof}
Any partial isometry $f$ of $T_n$ can be written as $f=\bigcup_{i=1}^kf_i$ where each $f_i$ is a composition of partial isometries of the form \[x\in   A\to\tilde y(x)\in \tilde y(A)\] for some $y\in T_n$ and $A\subseteq T_n$, and partial isometries of the form \[\tilde y(x)\in \tilde y( A)\to x\in  A,\] for some $y\in T_n$ and $A\subseteq T_n$. Therefore, by Theorem \ref{ThmSufficientKMS},  it is enough to notice that \eqref{EqVarphiSufficient9} holds for partial isometries of the second kind. For that, fix $y\in T_n$ and $A\subseteq T_n$, and let $g\colon  \tilde y( A)\to  A$ be the partial isometry given by $g(\tilde y(x))=x$ for all $x\in A$. Then, since  
\[h(x)-h(\tilde y(x))=|y|\ \text{ for all }\ x\in T_n,\]
our assumption on $\varphi$ implies that 
\begin{align*}
    \varphi(\chi_A)&=e^{-\beta |y|}\varphi\Big(\chi_A e^{\beta (\overline{  h- h\circ \tilde y})}\Big)\\
    &=e^{-\beta|y|}\varphi(\chi_{\tilde y(A) })\\
    &=\varphi\Big(\chi_{\tilde y(A) }e^{\beta(\overline{  h- h\circ  g})}\Big).
\end{align*}
So, we are done.
\end{proof}

\begin{proof} [Proof of Theorem \ref{ThmSemiG}]
We start establishing some convention. Firstly, recall that $\ell_\infty(T_n)$ is canonically isomorphic to $C(\beta T_n)$. In order to keep track of notation, if $a\in \ell_\infty(T_n)$, we write $\bar a$ to denote $a$ as an element of $C(\beta T_n)$. Notice that, if $a=\chi_A$ for some $A\subseteq T_n$, then 
\[\overline{\chi_A}=\chi_{\bar A},\]
where the closure $\bar A$ is taken in $\beta T_n$.  Therefore, if $\varphi$ is a state on $\ell_\infty(T_n)$, we can view it as a state on $C(\beta T_n)$, i.e., $\varphi$ is a Borel measure on $\beta T_n$  and
\[\varphi(a)=\int_{\beta T_n}\bar ad \varphi\ \text{ for all }\ a\in \ell_\infty(T_n).\]
With this in mind, we define the \emph{support of $\varphi$} as the support of $\varphi$ as a Borel measure on $\beta T_n$ and   denote it by  $\supp(\varphi)\subseteq \beta T_n$. Suppose now that $\varphi$ is a state on $\cstu(T_n)$. Then, $\varphi\restriction \ell_\infty(T_n)$ is a state on $\ell_\infty(T_n)$ and, by abuse of notation, we   write
\[\supp(\varphi)=\supp(\varphi\restriction \ell_\infty(T_n)).\]

We now start the proof. By Lemma \ref{LemmaChou}, there is a family $(L_j)_{j\in J}$ of nonempty, mutually disjoint, closed, invariant subsets of $\beta T_n$ such that $|J|=2^{2^{\aleph_0}}$. Fix $j\in J$ and, for simplicity, let $L=L_j$.    Denote the subset of all $(\sigma_h,\beta)$-KMS states on $\cstu(T_n)$ which vanish on the compacts by $K_{\beta }$ and define 
\[K_{\beta }^L=\{\varphi \in K_{\beta }\mid \supp(\varphi)\subseteq L\}.\]
Clearly, $K_{\beta }^L$ is convex and weak$^*$-compact. Let us show $K_{\beta }^L$ is nonempty. 

By Theorem \ref{ThmnBranchingTree}, $K_\beta\neq \emptyset$. From now on, we fix $\varphi\in K_{\beta }$. As $L$ is nonempty, fix also $\omega\in L$. We define a state $\psi$ on $\ell_\infty(T_n)$ as follows: for each $a\in \ell_\infty(T_n)$, let  $\tilde a\in \ell_\infty(T_n)$ be given by 
\[\tilde a(y)=\bar a(\tilde y(\omega))\ \text{ for all }\ y\in T_n.\]
 We then let $\psi$ be the state on $\ell_\infty(T_n)$ given by 
\[\psi(a)=\varphi(\tilde a) \ \text{ for all }\ a\in\ell_\infty(T_n).\]
We extend $\psi$ to the whole $\cstu(T_n)$ in the usual way, that is, we let $\psi=\psi\circ E$ where $E\colon \cstu(T_n)\to \ell_\infty(T_n)$ is the canonical conditional expectation.
Since it is immediate that $\psi$ is indeed a state on $\cstu(T_n)$, we only need to show that $\psi$ satisfies the required KMS condition and that $\supp(\psi)\subseteq L$.

For the KMS conditions, let $y\in T_n$ and $A\subseteq T_n$;  so, $\tilde y\restriction A\colon A\to \tilde y(A)$ is a partial translation on $T_n$.  Notice that 
\begin{align}\label{EqChou1}
  \widetilde{\chi_{\tilde y(A)}}(x)= \overline{\chi_{\tilde y(A)}}(\tilde x(w))= \chi_{\overline{\tilde y(A)}}(\tilde x(w))=  \chi_{ \tilde y(\bar A)}(\tilde x(w)) 
\end{align}
for all $x\in T_n$. In order to understand $ \chi_{ \tilde y(\bar A)}(\tilde x(\omega)) $, notice that
\[\tilde y(\{x\in T_n\mid \tilde x(\omega)\in \bar A\})\subseteq \{x\in T_n\mid \tilde x(\omega)\in \tilde y(\bar A)\}\]
and
 \[\{x\in T_n\mid \tilde x(\omega)\in \tilde y(\bar A) \text{ and } |x|\geq |y|\}\subseteq \tilde y(\{x\in T_n\mid \tilde x(\omega)\in \bar A\}).\]
 Therefore, as $\{x\in T_n\mid |x|<|y|\}$ is finite and as $\varphi$ vanishes on compacts, letting  \[B=\{x\in T_n\mid \tilde x(\omega)\in \bar A\}\ \text{ and }\ C=\{x\in S\mid \tilde x(\omega)\in \tilde y(\bar A)\},\] we have that $\varphi(\chi_{\tilde y(B)})=\varphi(\chi_C)$. By \eqref{EqChou1}, we have   
 $\widetilde{\chi_{\tilde y(A)}}=\chi_C$ and our discussion gives
\begin{align}\label{Eq12345}
\psi(\chi_{\tilde y(A)})& =\varphi(\widetilde{\chi_{\tilde y(A)}})\\
&=\varphi(\chi_{\tilde y(B)})\notag
\\
&=\varphi\Big(\chi_B e^{\beta(\overline{ h- h\circ\tilde y})}
\Big) .\notag
\end{align}

As  $h- h\circ \tilde s$ is bounded,  $(  h- h\circ \tilde s)^\sim$ is well defined. Let $(z_j)_j$ be a net of elements of $T_n$ converging to $\omega$. Notice that    
\[h(x^\smallfrown y)=h(x)+h(y)\ \text{ for all }\ x,y\in T_n.\]
Therefore,  
\begin{align*}
(  h- h\circ \tilde y)^\sim(x)&=\overline{(  h- h\circ \tilde y)}(\tilde x(\omega))\\
&= \lim_{i}( h(z_j x)-h(z_jxy)) \\
&=\lim_{i}(  h(x)-h(xy))\\
&=(  h- h\circ \tilde y)(x)
\end{align*}
for all $x\in T_n$. By the definition of $B$, it is clear that $\chi_B=\widetilde{\chi_A}$. Therefore, 
\begin{align}\label{Eq123456}
    \psi\Big(\chi_Ae^{\beta(\overline{ h- h\circ\tilde y})}\Big)&=\varphi \Big(\Big(\chi_Ae^{\beta(\overline{h- h\circ\tilde y})}\Big)^\sim\Big)\\
    &=\varphi \Big(\widetilde{\chi_A}\Big(e^{\beta(\overline{ h- h\circ\tilde y})}\Big)^\sim\Big)\notag\\
    &=\varphi \Big(\chi_B e^{\beta(\overline{ h- h\circ\tilde y})} \Big).\notag
\end{align}
By \eqref{Eq12345} and \eqref{Eq123456}, we conclude that 
\[\psi(\chi_{\tilde y(A)})=\psi\Big(\chi_Ae^{\beta(\overline{ h- h\circ\tilde y})}\Big).\]
As $y\in T_n$ and $A\subseteq T_n$ were arbitrary, this shows that $\psi$ is a $(\sigma_h,\beta)$-KMS state on $\cstu(T_n)$.

Let us notice $\supp(\psi)\subseteq L$. Suppose $\omega'\not\in L$. Then there is $A\subseteq T_n $ such that $\omega'\in \bar A$ and $A\cap L=\emptyset$. As $\omega\in L$ and $L$ is invariant, $\tilde x(\omega)\in L$ for all $x\in T_n$. Hence, 
\[\widetilde{\chi_A}(x)=\chi_{\bar A}(\tilde x(\omega))=0\]
for all $x\in T_n$, i.e., $\widetilde{\chi_A}(x) $. Then, thinking of $\psi$ as being defined on $C(\beta T_n)$ as described above, we have that $\psi(\widetilde{\chi_A})=0$. This shows that $\supp(\psi)\subseteq L$ and we concluded our proof that $K^L_\beta\neq \emptyset$.

Since $j\in J$ was arbitrary, we have that each $K^{L_j}_\beta$ is convex, weak$^*$ compact, and nonempty. Hence,  Krein--Milman theorem implies that each of them contains extreme points. Since $(L_j)_{j\in J}$ are disjoint, this implies that there are $2^{2^{\aleph_0}}$ many extreme points and we are done.\end{proof}

 \subsubsection{Localization of KMS states on $\cstu(T_n)$ for $\beta=\log(n)$.}
We are left to notice that  a version of Theorem \ref{ThmSemiG} holds along every branch of $T_n$. For that, we must further analyze the Higson corona of $T_n$. More precisely, we must identify a \cstar-subalgebra of $C(\nu T_n)$ which will help us to locate the KMS states on $\cstu(T_n)$ for inverse temperature $\beta=\log(n)$ better.

We first introduce some notation. Firstly, let $[T_n]$ denote the \emph{branches} of $T_n$, i.e., \[[T_n]=\{1,\ldots, n\}^\N.\]
Given $\bar x=(x_j)_{j=1}^\infty\in [T_n]$ and $k\in\N$, we let $\bar x|k$ be the initial segment of $\bar x$ with $k$ letters, i.e., \[\bar x|k=(x_1,\ldots,x_k).\] 
 We now set
\[\mathcal T_n=T_n\cup [T_n]\] and endow  $\mathcal T_n$ with an appropriate topology. For that, we first extend the concatenation operation: for $y\in T_n$ and $\bar x\in [T_n]$, we let 
\[y^\smallfrown \bar x=(y_1,\ldots,y_{|y|}, x_1,x_2,\ldots)\in [T_n].\]
Given any  $y\in T_n$, we let
\[y^\smallfrown \mathcal T_n=\{x\in \mathcal T_n\mid \exists z\in \mathcal T_n\text{ with }x=y^\smallfrown z\},\]
i.e., $y^\smallfrown \mathcal T_n$   denotes the set of  words, finite or not, which ``start'' with $y$. We define $y^\smallfrown T_n$ and $y^\smallfrown [T_n]$ analogously, i.e., 
\[y^\smallfrown T_n=(y^\smallfrown \mathcal T_n)\cap T_n\ \text{ and }\ y^\smallfrown [T_n]=(y^\smallfrown \mathcal T_n)\cap[ T_n].\]

We endow  $\mathcal T_n$ with the topology generated by  
\[\cP(T_n)\cup\{y^\smallfrown \mathcal T_n\mid y\in T_n\}.\]
So, $T_n$ is an open subset of $\mathcal T_n$ and the inclusion \[T_n\hookrightarrow \mathcal T_n\] is a homeomorphic embedding with dense range. Moreover, it is easy to see that $\mathcal T_n$ is a compact space. Hence, $\mathcal T_n$ is a \emph{compactification} of $T_n$.

 As $T_n$ is dense in $\mathcal T_n$, this allow us to see $C(\mathcal T_n)$ as a \cstar-subalgebra of $\ell_\infty(T_n)$ in a canonical way. Precisely, we identify $C(\mathcal T_n)$ with the image of the following  injective $^*$-homomorphism
\[f\in C(\mathcal T_n)\mapsto f\restriction T_n\in \ell_\infty(T_n).\]

\begin{lemma}\label{LemmaHigsonCoronaTn}
Let $n\in\N$ and consider the $n$-branching tree $T_n$. Then:
\begin{enumerate}
\item\label{LemmaHigsonCoronaTn.Item1} For all $y\in T_n$, the projection $\chi_{y^\smallfrown T_n}$ is  a Higson function.
\item\label{LemmaHigsonCoronaTn.Item2} The Banach space 
\[C_n=\overline{\mathrm{span}}\{\chi_{y^\smallfrown T_n}\mid y\in T_n\}\]
is a \cstar-algebra contained in $C_h(T_n)$.
\item \label{LemmaHigsonCoronaTn.Item3} Under the identification  of   $C(\mathcal T_n)$ with the \cstar-subalgebra of $\ell_\infty(T_n)$ described above, 
 we have $C_n=C(\mathcal T_n)$. In particular, the compactification $\mathcal T_n$ is Higson compatible.
\end{enumerate}
In particular,   identifying $C([T_n])=C(\mathcal T_n)/c_0(T_n)$ via  Gelfand transfom, we have that $C([T_n])\subseteq \roeq(T_n)$. 
\end{lemma}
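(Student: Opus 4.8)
The plan is to establish the three items in turn and then read off the final identification from the general machinery recorded after Definition~\ref{DefinitionSuppGENEREAL}. The geometric engine throughout is the elementary tree-distance identity: writing $x\wedge x'$ for the longest common prefix of $x,x'\in T_n$, one has $d(x,x')=|x|+|x'|-2|x\wedge x'|$, since the geodesic between $x$ and $x'$ climbs to their meet and descends again.

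For \eqref{LemmaHigsonCoronaTn.Item1}, fix $y\in T_n$, $\eps>0$, and $R>0$; I will in fact show $\chi_{y^\smallfrown T_n}$ is eventually locally constant at scale $R$, which is far stronger than the Higson condition. Suppose $x\in y^\smallfrown T_n$, $x'\notin y^\smallfrown T_n$, and $d(x,x')<R$. Since $y$ is a prefix of $x$ but not of $x'$, the common prefix $p=x\wedge x'$ is a \emph{proper} prefix of $y$: it is a prefix of $x$ and hence comparable to $y$, and it cannot contain $y$ lest $y$ be a prefix of $x'$. Thus $|p|\le|y|-1$, and the distance identity gives $R>d(x,x')\ge |x|-|p|\ge |x|-|y|+1$, so $|x|<|y|+R$ and likewise $|x'|<|y|+2R$. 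Taking the finite set $F=\{z\in T_n : |z|\le |y|+2R\}$ (finite by u.l.f.), any $x,x'\in T_n\setminus F$ with $d(x,x')<R$ must lie on the same side of the cone $y^\smallfrown T_n$, so $\chi_{y^\smallfrown T_n}(x)=\chi_{y^\smallfrown T_n}(x')$. This verifies Definition~\ref{DefinitionHigsonCor}.

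For \eqref{LemmaHigsonCoronaTn.Item2}, the key observation is that these cone projections are closed under products: $\chi_{y^\smallfrown T_n}\chi_{z^\smallfrown T_n}=\chi_{(y^\smallfrown T_n)\cap(z^\smallfrown T_n)}$, and a word can start with both $y$ and $z$ only when $y,z$ are comparable, in which case the intersection is the smaller cone and otherwise it is empty; so the product is again a generator or $0$. As each generator is a real projection and $\chi_{\emptyset^\smallfrown T_n}=\chi_{T_n}$, the linear span $\mathrm{span}\{\chi_{y^\smallfrown T_n}\}$ is a self-adjoint, multiplicatively closed, unital subalgebra of $\ell_\infty(T_n)$; its norm-closure $C_n$ is therefore a unital \cstar-algebra, and it sits inside $C_h(T_n)$ by \eqref{LemmaHigsonCoronaTn.Item1} together with the fact that $C_h(T_n)$ is norm-closed. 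Item \eqref{LemmaHigsonCoronaTn.Item3} is the main point, where the topology of $\mathcal T_n$ enters. I would first check that each $y^\smallfrown\mathcal T_n$ is \emph{clopen}: it is open by definition, and its complement is open because a finite word not extending $y$ is isolated in $\cP(T_n)$, while a branch $\bar x$ not extending $y$ has $\bar x|k\ne y$ for $k=|y|$, so the basic neighborhood $(\bar x|k)^\smallfrown\mathcal T_n$ consists of words whose first $|y|$ letters equal $\bar x|k\ne y$ and hence misses $y^\smallfrown\mathcal T_n$. Thus $\chi_{y^\smallfrown\mathcal T_n}\in C(\mathcal T_n)$ restricts to $\chi_{y^\smallfrown T_n}$ on $T_n$; since restriction is an isometric $^*$-isomorphism onto its image ($T_n$ being dense), this already gives $C_n\subseteq C(\mathcal T_n)$. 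For the reverse inclusion I would invoke Stone--Weierstrass: the family $\{\chi_{y^\smallfrown\mathcal T_n}\}$ is unital and separates points of $\mathcal T_n$ (given distinct $w,w'$, take the prefix of $w$ up to the first coordinate where they differ, or the length-$(|w|+1)$ prefix of the longer one when one properly extends the other, to obtain a cone containing exactly one of them). By the product computation above, the closed unital $^*$-algebra they generate equals $\overline{\mathrm{span}}\{\chi_{y^\smallfrown\mathcal T_n}\}$, so Stone--Weierstrass forces it to be all of $C(\mathcal T_n)$; restricting to $T_n$ yields $C(\mathcal T_n)=C_n$. Higson compatibility of $\mathcal T_n$ is then immediate from \eqref{LemmaHigsonCoronaTn.Item2}, since every $f\in C(\mathcal T_n)$ restricts into $C_n\subseteq C_h(T_n)$.

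Finally, the ``in particular'' clause is the general identification recorded after Definition~\ref{DefinitionSuppGENEREAL}, applied to the Higson compatible compactification $\bar X=\mathcal T_n$: one has $c_0(T_n)\subseteq C(\mathcal T_n)$ (extend a function vanishing at infinity by $0$ on $[T_n]$, with continuity at a branch following since its basic neighborhoods consist of ever-longer words), the kernel of restriction to $[T_n]=\mathcal T_n\setminus T_n$ is exactly $c_0(T_n)$, and Gelfand duality gives $C([T_n])=C(\mathcal T_n)/c_0(T_n)$, which embeds into $C(\nu T_n)\subseteq\roeq(T_n)$. I expect the separation-of-points and Stone--Weierstrass step of \eqref{LemmaHigsonCoronaTn.Item3} to be the only place requiring genuine care --- in particular the verification that the generated \cstar-algebra is literally the closed span, and the clopenness argument at the branch points of the non-metrizable compactification --- everything else being routine tree combinatorics.
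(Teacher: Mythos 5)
Your proposal is correct. Items \eqref{LemmaHigsonCoronaTn.Item1} and \eqref{LemmaHigsonCoronaTn.Item2} follow the same lines as the paper (the paper takes $F=\{x:|x|\le|y|+R\}$ and asserts the dichotomy without the explicit computation $d(x,x')\ge|x|-|x\wedge x'|$ that you supply; your slightly larger $F$ and the tree-distance identity make the verification airtight, and the product formula for cone projections is identical). The genuine divergence is in item \eqref{LemmaHigsonCoronaTn.Item3}. The paper first characterizes $C_n$ intrinsically as the set of sums $\sum_y a_y\chi_{y^\smallfrown T_n}$ whose partial sums along branches are equi-convergent, then builds an explicit $^*$-embedding $\Phi\colon C_n\to C(\mathcal T_n)$ by extending each such element to $[T_n]$ via $\lim_k a(\bar x|k)$, and finally proves surjectivity of $\Phi$ by hand: given $f\in C(\mathcal T_n)$ and $\eps>0$, it uses compactness of $[T_n]$ to cover the branches by finitely many cones on which $f$ oscillates by less than $\eps$ and approximates $f$ by a finite linear combination of cone projections. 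You instead work inside $C(\mathcal T_n)$ from the start: you check that each cone $y^\smallfrown\mathcal T_n$ is clopen (so its characteristic function lies in $C(\mathcal T_n)$ and restricts to $\chi_{y^\smallfrown T_n}$, and density of $T_n$ makes restriction isometric, giving $C_n\subseteq C(\mathcal T_n)$), and you get the reverse inclusion from Stone--Weierstrass, since the cone projections form a unital self-adjoint point-separating family whose generated $^*$-algebra is already the linear span. Your route is cleaner in that it avoids the equi-convergence description of $C_n$ (which the paper states without proof) and replaces the explicit approximation with a standard density theorem; the paper's route is more constructive and hands you the concrete form of a general element of $C_n$, which is occasionally useful downstream. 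Two small points you should make explicit if you write this up: Stone--Weierstrass needs $\mathcal T_n$ to be compact \emph{Hausdorff}, so record the (easy) separation of points of $\mathcal T_n$ by disjoint cones alongside the compactness the paper already asserts; and in the final clause, surjectivity of the restriction $C(\mathcal T_n)\to C([T_n])$ (via Tietze, or via your clopen cones and Stone--Weierstrass on $[T_n]$) is what justifies writing $C([T_n])=C(\mathcal T_n)/c_0(T_n)$ rather than merely an inclusion.
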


\begin{proof}
\eqref{LemmaHigsonCoronaTn.Item1}
Fix $y\in T_n$. Let $\eps>0$ and $R>0$. Let
\[F=\{x\in T_n\mid |x|\leq |y|+R\}.\]
Then, if $x,z\in T_n\setminus F$ and $d(x,z)<R$, we must have that either both $x$ and $z$ are in $y^\smallfrown T_n$, or neither of them are. In either case, we have \[|\chi_{y^\smallfrown T_n}(x)-\chi_{y^\smallfrown T_n}(z)|=0,\]
so $\chi_{y^\smallfrown T_n}$ is a Higson function.

\eqref{LemmaHigsonCoronaTn.Item2}
It is evident that $C_n$ is closed under the adjoint operator. So, we only need to show that $C_n$ is also closed under product. If $x,z\in T_n$, we write $x\leq z$ if $|x|\leq |z|$ and $x_i=z_i$ for all $i\in \{1,\ldots, |x|\}$. The fact that  $C$ is a \cstar-algebra follows from the straightforward fact that, for all $x,z\in T_n$, we have 
\[\chi_{y^\smallfrown T_n}\chi_{z^\smallfrown T_n}=
\left\{\begin{array}{ll}
\chi_{z^\smallfrown T_n},& \text{if }y\leq z,\\
\chi_{y^\smallfrown T_n},& \text{if }z\leq y,\\
0,& \text{otherwise.}
\end{array}\right.\]
So, $C_n$ is  closed under multiplication. The fact that $C_n\subseteq C_h(T_n)$ follows from \eqref{LemmaHigsonCoronaTn.Item1}. 

\eqref{LemmaHigsonCoronaTn.Item3} We start noticing that 
\begin{align*}
a\in C_n \ \Leftrightarrow\ & a=\sum_{y\in T_n}a_y\chi_{y^\smallfrown T_n} \text{ for some }(a_y)_{y\in T_n}\in \ell_\infty(T_n)\text{ such that }
\\
&\text{the sums } \Big(\sum_{k\in \N}a_{\bar x|k}\Big)_{\bar x\in [T_n]} \text{ are equi-convergent.}
\end{align*}
In particular, if $\bar x\in [T_n]$ and $a=\sum_{y\in T_n}a_y\chi_{y^\smallfrown T_n}$ is as above, the limit \[\lim_{k\to \infty}a(\bar x|k)=\sum_{k\in \N}a_{\bar x|k}\]
exists. We can then define an $^*$-isomorphic embedding $\Phi\colon C_n\to C(\mathcal T_n)$ by letting 
\[\Phi(a)(w)=\left\{\begin{array}{ll}
a(w),& \text{ if }w\in T_n,\\
\lim_{k\to \infty}a(w|k), & \text{ if }w\in [T_n].
\end{array}\right.\]
It is straightforward to show that $\Phi$ is indeed well-defined, i.e., $\Phi(a)$ is a continuous function on $\mathcal T_n$ for all $a\in C_n$. Moreover, it is also clear $\Phi$ is an injective $^*$-homomorphism and that 
\[\Phi(a)\restriction T_n=	a.\]

We are left to notice that the  $\Phi$ is sujective. For that, we show that the image of 
\[\mathrm{span}\{\chi_{y^\smallfrown T_n}\mid y\in T_n\}\]
under $\Phi$ is dense in $C(\mathcal T_n)$. Fix $f\in C(\mathcal T_n)$ and $\eps>0$. As $f$ is continuous and $[T_n]$ is compact, we can pick $y_1,\ldots, y_k\in T_n$ such that 
\begin{equation}\label{EqCoverBranches}[T_n]\subseteq \bigcup_{j=1}^ky_j^\smallfrown \mathcal T_n
\end{equation}
and
\[|f(x)-f(z)|<\eps\text{ for all }i\in \{1,\ldots, k\} \text{ and all } x,z\in y_j^\smallfrown T_n. \]
By \eqref{EqCoverBranches}, there is a finite set $F\subseteq T_n$ such that 
\[ T_n\subseteq F\cup \bigcup_{j=1}^ky_j^\smallfrown  T_n.\]
For simplicity, assume $F\cap y_j^\smallfrown  T_n=\emptyset$ for all $j\in \{1,\ldots,k\}$ and let  $a\in \ell_\infty(X)$ be given by 
\[a(x)=\left\{\begin{array}{ll}
f(x),& \text{ if }x\in F,\\
f(y_j),&\text{ if } j\in \{1,\ldots, k\}\text{ and } x\in y_j^\smallfrown T_n.
\end{array}\right.\]
It is straightforward to check that \[a\in\mathrm{span}\{\chi_{y^\smallfrown T_n}\mid y\in T_n\}\]
and that $\|\Phi(a)-f\|\leq \eps$. 
\end{proof}

The next couple of results will focus more on KMS states on $\roeq(T_n)$ and will not be necessary for the main result of this section per se (Theorem \ref{ThmnBranchingTreeCOMPLETEINTRO}). The reader interested only in Theorem \ref{ThmnBranchingTreeCOMPLETEINTRO} can safely skip to Lemma \ref{LemmaIsomTreeBranches}.

\begin{definition}
 In the setting of Assumption \ref{AssumptionBranchingTree}: For each $\beta>\log(n)$, let $\varphi_\beta$ be the $(\sigma_{h},\beta)$-KMS state in Theorem \ref{ThmnBranchingTree}. If $(\beta_k)_k\subseteq (\log(n),\infty)$ is a sequence converging to $\log(n)$ and $ \cU$ is a nonprincipal ultrafilter on $\N$, then
 \[\varphi=w^*\text{-}\lim_{k,\cU}\varphi_{\beta_k}\]
 is a $(\sigma_{h},\log(n))$-KMS state on $\cstu(T_n)$. We call any such KMS states  a \emph{limiting KMS state}. By   Theorem \ref{ThmnBranchingTree}, those states always vanish on $\cK(\ell_2(T_n))$.  
\end{definition}

\begin{corollary}\label{CorTnFaithfulQn}
 In the setting of Assumption \ref{AssumptionBranchingTree}: Let $\beta=\log(n)$ and  $\varphi$ be a limiting $(\sigma_{h},\beta)$-KMS state on $\cstu(T_n)$. Let $\psi$ be the   $(\sigma_{h}^\infty,\beta)$-KMS on $\roeq(T_n)$ such that   $\varphi=\psi\circ \pi$. Then, the rescriction of $\psi$ to $C([T_n])$ is faithful.
\end{corollary}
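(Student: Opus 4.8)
The plan is to compute the restriction of $\psi$ to $C([T_n])$ explicitly and identify it with integration against the uniform Bernoulli measure on $[T_n]$, which has full support. Recall from Lemma \ref{LemmaHigsonCoronaTn} that $C([T_n])=C(\mathcal T_n)/c_0(T_n)$, so a nonzero positive element is represented by a nonnegative $g\in C([T_n])$, $g\neq 0$, which lifts to a diagonal operator $\tilde g\in C(\mathcal T_n)\subseteq\ell_\infty(T_n)\subseteq\cstu(T_n)$ with $\lim_{k\to\infty}\tilde g(\bar x|k)=g(\bar x)$ for all $\bar x\in[T_n]$; since $\varphi$ vanishes on the compacts this lift is immaterial modulo $c_0(T_n)$ and $\psi(g)=\varphi(\tilde g)$. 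As $\varphi$ is a limiting state, $\varphi(\tilde g)=\lim_{k,\cU}\varphi_{\beta_k}(\tilde g)$, so it suffices to show that $\varphi_\beta(\tilde g)\to\int_{[T_n]}g\,d\mu$ as $\beta\to\log(n)^+$ — whence the ordinary limit exists and every ultrafilter limit agrees with it — and that this integral is strictly positive.

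First I would rewrite the explicit formula of Theorem \ref{ThmnBranchingTree}. Writing $q=ne^{-\beta}\in(0,1)$ for $\beta>\log(n)$, the weight $e^{-\beta|y|}-ne^{-\beta(|y|+1)}$ equals $(1-q)q^{|y|}n^{-|y|}$, so grouping the vertices of $T_n$ by level yields $\varphi_\beta(\tilde g)=(1-q)\sum_{m=0}^\infty q^m\bar g_m$, where $\bar g_m=n^{-m}\sum_{|y|=m}\tilde g(y)$ is the average of $\tilde g$ over the level-$m$ vertices. Letting $\mu$ denote the uniform product measure on $[T_n]$, the pushforward of $\mu$ under $\bar x\mapsto\bar x|m$ is the uniform measure on level $m$, so $\bar g_m=\int_{[T_n]}\tilde g(\bar x|m)\,d\mu(\bar x)$.

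Next I would let $\beta\to\log(n)^+$, i.e.\ $q\to 1^-$. Using compactness of $\mathcal T_n$ and continuity of the extension $\Phi(\tilde g)$ from Lemma \ref{LemmaHigsonCoronaTn}, the convergence $\tilde g(\bar x|m)\to g(\bar x)$ is uniform in $\bar x$, so $\bar g_m\to\int_{[T_n]}g\,d\mu=:L$. Since the numbers $(1-q)q^m$ form a probability distribution on the levels, $(1-q)\sum_m q^m\bar g_m$ is precisely an Abel mean of the convergent sequence $(\bar g_m)$, and Abel's theorem gives $(1-q)\sum_m q^m\bar g_m\to L$ as $q\to 1^-$. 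Hence $\varphi_\beta(\tilde g)\to L$ and therefore $\psi(g)=\int_{[T_n]}g\,d\mu$.

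Finally, faithfulness follows because $\mu$ has full support: every basic clopen set $y^\smallfrown[T_n]$ has measure $n^{-|y|}>0$, so a nonzero positive $g$ is bounded below by a positive constant on some such set, forcing $\int_{[T_n]}g\,d\mu>0$. I expect the main obstacle to be the third paragraph — pinning down the uniform convergence of the level averages via continuity of $\Phi(\tilde g)$ on $\mathcal T_n$ and invoking the Abelian limit cleanly — whereas the reduction through the lift and the concluding positivity argument are routine.
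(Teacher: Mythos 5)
Your proof is correct and follows essentially the same route as the paper: both identify $\psi\restriction C([T_n])$ with integration against the uniform Bernoulli measure on $[T_n]$ and conclude faithfulness from its full support. The only difference is that the paper evaluates the limit only on the cylinder projections $\chi_{y^\smallfrown T_n}$ (getting $\varphi(\chi_{y^\smallfrown T_n})=n^{-|y|}$ directly, which already determines the measure), whereas you verify the formula on arbitrary elements of $C([T_n])$ via level averages and Abel summation --- correct, but more work than necessary.
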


\begin{proof}
Let $\mu$ be the probability measure on $[T_n]$ given by Riesz representation theorem by restricting $\psi$ to $C([T_n])$, i.e., 
\[\psi(a)=\int_{[T_n]}ad\mu\ \text{ for all }\ a\in C([T_n]).\]

Since $\varphi$ is a limiting $(\sigma_{h},\log(n))$-KMS state, let $(\beta_k)_k\subseteq (\log(n),\infty)$ be a sequence converging to $\log(n)$ and $\cU$ be a nonprincipal ultrafilter such that 
 \[\varphi=w^*\text{-}\lim_{k,\cU}\varphi_{\beta_k}.\]
 By the formula of each $\varphi_{\beta_k}$ given by Theorem \ref{ThmnBranchingTree}, it follows that 
 \[\varphi_{\beta_k}(\chi_{y^\smallfrown T_n})=e^{-\beta_k|y|} \ \text{ for all }\ y\in T_n.\]
 Hence, by the formula of $\varphi$, we have 
 \[\varphi(\chi_{y^\smallfrown T_n})=\lim_{k\to \infty}e^{-\beta_k|y|}=\frac{1}{n^{|y|} }\ \text{ for all }\ y\in T_n.\]

 This shows that $\mu$ is the Bernoulli measure on $[T_n]=\{1,\ldots,n\}^\N$. Since the support of the   Bernoulli measure is the whole $[T_n]$, this shows that $\varphi$ is faithful on $C([T_n])$. This completes the proof.
\end{proof}

\begin{corollary}\label{CorManyKMSRoeq}
In the setting of Assumption \ref{AssumptionBranchingTree}: If $\beta=\log(n)$, then for all $\bar x\in [T_n]$ there is an extreme $(\sigma^\infty_h,\beta)$-KMS state $\psi$ on $\roeq(T_n)$ such that 
\[\varphi(a)=a(x)\ \text{ for all }\ a\in C([T_n]).\]
Moreover, if $\varphi$ is a limiting $(\sigma_{h},\beta)$-KMS state on $\cstu(T_n)$, then  the $(\sigma_{h}^\infty,\beta)$-KMS state $\psi$ on $\roeq(T_n)$ determined by $\varphi=\psi\circ \pi$ is not extreme.
\end{corollary}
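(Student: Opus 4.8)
The plan is to read both assertions off the general corona theory of Section~\ref{SectionURCandHC}, applied to the specific central subalgebra $C([T_n])\subseteq C(\nu T_n)\subseteq\roeq(T_n)$ isolated in Lemma~\ref{LemmaHigsonCoronaTn}. Since $C([T_n])=C(\mathcal T_n)/c_0(T_n)$, its spectrum is exactly the branch space $[T_n]=\{1,\dots,n\}^\N$, and for $\bar x\in[T_n]$ the corresponding character of $C([T_n])$ is point evaluation $a\mapsto a(\bar x)$. Both parts will then be instances of Theorem~\ref{ThmExtRoeqGENERAL} once I provide the single ingredient it needs, namely a KMS state on $\roeq(T_n)$ that is faithful on $C([T_n])$.

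For the first assertion I would invoke Theorem~\ref{ThmExtRoeqGENERAL}(2) with $C=C([T_n])$ and $\beta=\log(n)$. Its hypothesis is the existence of a $(\sigma_h^\infty,\log(n))$-KMS state on $\roeq(T_n)$ restricting faithfully to $C([T_n])$, which is precisely what Corollary~\ref{CorTnFaithfulQn} supplies: starting from any limiting KMS state $\varphi$ on $\cstu(T_n)$ (obtained as a weak$^*$-cluster point of the unique states $\varphi_{\beta_k}$ of Theorem~\ref{ThmnBranchingTree} as $\beta_k\downarrow\log(n)$, and vanishing on the compacts), the induced state on $\roeq(T_n)$ is faithful on $C([T_n])$ because its restriction is the fully supported Bernoulli measure. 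As $\Omega(C([T_n]))=[T_n]$, Theorem~\ref{ThmExtRoeqGENERAL}(2) then yields, for each $\bar x\in[T_n]$, an extreme $(\sigma_h^\infty,\log(n))$-KMS state $\psi$ with $\psi(a)=a(\bar x)$ for all $a\in C([T_n])$, which is the desired statement (correcting the evident typo $\varphi\rightsquigarrow\psi$, $x\rightsquigarrow\bar x$).

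For the ``moreover'' part, let $\varphi$ be a limiting KMS state and $\psi$ the state on $\roeq(T_n)$ with $\varphi=\psi\circ\pi$. By Corollary~\ref{CorTnFaithfulQn} the restriction $\psi\restriction C([T_n])$ is integration against the Bernoulli measure $\mu$ on $[T_n]$. For $n\geq 2$ the set $[T_n]$ has more than one point and $\mu$ has full support, hence $\mu$ is not a Dirac mass and $\psi\restriction C([T_n])$ is \emph{not} a character of $C([T_n])$. On the other hand, Theorem~\ref{ThmExtRoeqGENERAL}(1) (equivalently Proposition~\ref{PropRestrictCenterVanishes}, legitimate because $C([T_n])\subseteq C(\nu T_n)=\cZ(\roeq(T_n))$ by Proposition~\ref{PropHigsonCorCenter}) forces every extreme $(\sigma_h^\infty,\log(n))$-KMS state to restrict to a point evaluation on $C([T_n])$. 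Since $\psi\restriction C([T_n])$ is not of this form, $\psi$ cannot be extreme.

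The argument is essentially bookkeeping on top of the already-established machinery, so there is no single hard analytic step; the points that require care are verifying that the hypotheses of Theorem~\ref{ThmExtRoeqGENERAL} genuinely apply here (that $C([T_n])$ is a unital central \cstar-subalgebra with spectrum $[T_n]$, from Lemma~\ref{LemmaHigsonCoronaTn} and Proposition~\ref{PropHigsonCorCenter}) and that the same Bernoulli-measure computation of Corollary~\ref{CorTnFaithfulQn} serves both as the faithfulness input for part one and as the non-degeneracy obstruction in part two. The one genuine caveat is the degenerate case $n=1$: there $[T_n]$ is a single point and $C([T_n])=\C$, so the Bernoulli measure is trivially a point mass and this subalgebra cannot detect non-extremality, whence the ``moreover'' conclusion is meant for $n\geq 2$, where $[T_n]$ is uncountable.
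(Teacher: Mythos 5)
Your proposal is correct and follows essentially the same route as the paper: the first assertion is read off Theorem \ref{ThmExtRoeqGENERAL}(2) with $C=C([T_n])$ using the faithfulness supplied by Corollary \ref{CorTnFaithfulQn}, and the ``moreover'' part is the same contradiction between faithfulness of $\psi$ on $C([T_n])$ and the fact that extreme KMS states must kill the ideal $J_{\bar x}$ (Proposition \ref{PropRestrictCenterVanishes}). Your caveat about $n=1$ is a fair observation --- there $J_{\bar x}=\{0\}$ and the contradiction evaporates --- and it applies equally to the paper's own argument, which tacitly assumes $n\geq 2$.
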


\begin{proof}
The first assertion  follows from Theorem \ref{ThmExtRoeqGENERAL} and Corollary \ref{CorTnFaithfulQn}. For the second assertion, notice that if $\psi=\varphi\circ \pi$ were extreme, then there would be $\bar x\in [T_n]$ such that $\psi$ vanishes on the ideal 
\[J_x=\{a\in C([T_n])\mid a(\bar x)=0\}\]
(see Proposition \ref{PropRestrictCenterVanishes}). However, it was shown in the proof of Corollary \ref{CorTnFaithfulQn} that $\psi$ is   faithful on $C([T_n])$; contradiction.
\end{proof}

We now return to the proof of Theorem \ref{ThmnBranchingTreeCOMPLETEINTRO}.  The following lemma is trivial and we isolate it for further reference.

\begin{lemma}\label{LemmaIsomTreeBranches}
Let $n\in\N$ and $T_n$ be the $n$-branching tree. Given any $\bar x,\bar y\in [T_n]$ there is an isometry $f\colon T_n\to T_n$ such that  $f(\bar x|k)=\bar y|k$.\qed
\end{lemma}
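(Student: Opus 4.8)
The plan is to realize $f$ as a rooted-tree automorphism of $T_n$, which is automatically an isometry for the graph metric, and to arrange that it carries the ray $\bar x$ onto the ray $\bar y$. Recall that for $u,w\in T_n$ the graph distance is $d(u,w)=|u|+|w|-2|u\wedge w|$, where $u\wedge w$ denotes the longest common initial segment of $u$ and $w$; consequently any length-preserving bijection of $T_n$ that maps the children of each vertex bijectively onto the children of its image preserves the parent--child relation, hence adjacency, and is therefore an isometry. So it suffices to produce such a bijection fixing $\emptyset$ and sending $\bar x|k$ to $\bar y|k$ for every $k$.

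To build it, I would encode $f$ by a family of permutations $(\sigma_v)_{v\in T_n}$ with $\sigma_v\in S_n$, defining $f$ recursively by $f(\emptyset)=\emptyset$ and $f(v^\smallfrown a)=f(v)^\smallfrown \sigma_v(a)$ for $v\in T_n$ and $a\in\{1,\dots,n\}$. Writing $\bar x=(x_1,x_2,\dots)$ and $\bar y=(y_1,y_2,\dots)$, I would choose for each $k\ge 0$ the permutation $\sigma_{\bar x|k}$ to send $x_{k+1}$ to $y_{k+1}$ (for instance the transposition interchanging these two letters, or the identity when they coincide), and set $\sigma_v=\mathrm{id}$ for every $v$ not lying on the ray $\bar x$. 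A straightforward induction on $k$ then yields $f(\bar x|k)=\bar y|k$: the base case is $f(\emptyset)=\emptyset$, and assuming $f(\bar x|k)=\bar y|k$ we get $f(\bar x|(k+1))=f(\bar x|k)^\smallfrown \sigma_{\bar x|k}(x_{k+1})=\bar y|k^\smallfrown y_{k+1}=\bar y|(k+1)$.

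It remains to check that $f$ is a bijection, which is pure bookkeeping. Injectivity follows by induction on word length: from $f(v^\smallfrown a)=f(w^\smallfrown b)$ one reads off $|v|=|w|$ and $f(v)=f(w)$, whence $v=w$ by the inductive hypothesis and then $a=b$ because $\sigma_v$ is injective; surjectivity follows dually by peeling off the last letter and applying $\sigma_v^{-1}$. Thus $f$ is a rooted-tree automorphism, hence an isometry of $T_n$, satisfying $f(\bar x|k)=\bar y|k$ for all $k$. There is no genuine obstacle here: the whole content is that the automorphism group of $T_n$ acts transitively on the set of rays through the root, and each verification is routine.
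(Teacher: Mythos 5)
Your proof is correct; the paper explicitly declares this lemma trivial and offers no proof of its own, and your argument (encoding a rooted-tree automorphism by a portrait of permutations $(\sigma_v)_v$, choosing the transpositions along the ray $\bar x$ to steer it onto $\bar y$, and observing that any such automorphism is an isometry for the path metric) is exactly the routine verification the authors are leaving to the reader. Nothing is missing.
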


 Given  a metric space $X$ and an isometry  $f\colon X\to X$, we let $u_f\colon \ell_2(X)\to \ell_2(X)$ be the (linear) isometry  determined by 
 \[u_f(\delta_x)=\delta_{f(x)}\ \text{ for all  }\ x\in X.\]
 
 \begin{lemma}\label{LemmaMovingKMSStatesOnTree}
Let $n\in\N$ and $T_n$ be the $n$-branching tree. Let $f\colon T_n\to T_n$ be an isometry  and consider the (linear) isometry $u_f\colon \ell_2(T_n)\to \ell_2(T_n)$ defined above. Then, the map 
\[\varphi\to \varphi\circ \mathrm{Ad}(u_f)\]
is an affine isometry of the set of $(\sigma_h,\beta)$-KMS states on $\cstu(T_n)$ to itself. \end{lemma}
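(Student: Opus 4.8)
The plan is to show that $\mathrm{Ad}(u_f)\colon a\mapsto u_f a u_f^*$ is a $*$-automorphism of $\cstu(T_n)$ which commutes with the flow $\sigma_h$; once this is in place, precomposition with it will automatically carry $(\sigma_h,\beta)$-KMS states to $(\sigma_h,\beta)$-KMS states. First I would record that, $f$ being a bijective isometry of $T_n$, the operator $u_f$ merely permutes the orthonormal basis $(\delta_x)_{x\in T_n}$ and is therefore a \emph{unitary}, with $u_f^*=u_{f^{-1}}$. Next I would check that $\mathrm{Ad}(u_f)$ preserves $\cstu(T_n)$: a direct computation gives $(u_f a u_f^*)_{z,w}=a_{f^{-1}(z),f^{-1}(w)}$, and since $f$ is distance preserving this entry can be nonzero only when $d(z,w)=d(f^{-1}(z),f^{-1}(w))\le \propg(a)$; hence $\propg(u_f a u_f^*)\le \propg(a)$, so $\mathrm{Ad}(u_f)$ maps $\cstu[T_n]$ into itself and extends to a $*$-automorphism of $\cstu(T_n)$.

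The crux is the compatibility of $\mathrm{Ad}(u_f)$ with the flow, and this is where the specific choice $h(x)=d(x,\emptyset)$ enters. The key observation is that $f$, being a bijective isometry, is a graph automorphism of $T_n$, and the root $\emptyset$ is the unique vertex of degree $n$ (all other vertices have degree $n+1$); hence $f(\emptyset)=\emptyset$. Consequently $h(f(x))=d(f(x),\emptyset)=d(f(x),f(\emptyset))=d(x,\emptyset)=h(x)$, i.e. $h\circ f=h$. From $u_f\bar h u_f^*=\overline{h\circ f^{-1}}=\bar h$ I conclude that $u_f$ commutes with $\bar h$, hence with each unitary $e^{it\bar h}$, and therefore $\mathrm{Ad}(u_f)\circ\sigma_{h,t}=\sigma_{h,t}\circ\mathrm{Ad}(u_f)$ for all real $t$. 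Comparing the two entire functions $z\mapsto \mathrm{Ad}(u_f)(\sigma_{h,z}(b))$ and $z\mapsto \sigma_{h,z}(\mathrm{Ad}(u_f)(b))$, which agree on the real axis, shows that $\mathrm{Ad}(u_f)$ sends analytic elements to analytic elements and that $\mathrm{Ad}(u_f)(\sigma_{h,i\beta}(b))=\sigma_{h,i\beta}(\mathrm{Ad}(u_f)(b))$ for every analytic $b$.

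With these facts, the verification is routine. For a $(\sigma_h,\beta)$-KMS state $\varphi$ set $\varphi'=\varphi\circ\mathrm{Ad}(u_f)$; since $\mathrm{Ad}(u_f)$ is a unital $*$-automorphism, $\varphi'$ is a state, and for $a\in\cstu(T_n)$ and analytic $b$ I would compute
\[
\varphi'(a\sigma_{h,i\beta}(b))=\varphi\big(\mathrm{Ad}(u_f)(a)\,\sigma_{h,i\beta}(\mathrm{Ad}(u_f)(b))\big)=\varphi\big(\mathrm{Ad}(u_f)(b)\,\mathrm{Ad}(u_f)(a)\big)=\varphi'(ba),
\]
using multiplicativity of $\mathrm{Ad}(u_f)$, the flow-commutation just established, and the KMS condition for $\varphi$ applied to $\mathrm{Ad}(u_f)(a)$ and the analytic element $\mathrm{Ad}(u_f)(b)$. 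Thus $\varphi'$ is again a $(\sigma_h,\beta)$-KMS state. Affineness is immediate from linearity of precomposition, and since $\mathrm{Ad}(u_f)$ is an isometric automorphism of $\cstu(T_n)$ the assignment is norm preserving on differences; finally $\mathrm{Ad}(u_{f^{-1}})=\mathrm{Ad}(u_f)^{-1}$ provides the inverse assignment (with $f^{-1}$ again a root-fixing isometry), so the map is a bijective affine isometry of the KMS state space onto itself.

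The only genuinely delicate points are the two I flagged: pinning down that $f$ fixes the root so that $h\circ f=h$ (without this, $u_f$ need not commute with $\bar h$ and the conclusion fails), and the short analytic-continuation argument needed to transport the flow-commutation from real $t$ to $z=i\beta$; the remaining verifications are formal.
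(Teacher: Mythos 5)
Your proposal is correct and follows essentially the same route as the paper: observe that any isometry of $T_n$ fixes the root, hence $h\circ f=h$, so $\mathrm{Ad}(u_f)$ commutes with the flow $\sigma_h$, and precomposition with it therefore preserves the $(\sigma_h,\beta)$-KMS condition. You merely supply details the paper leaves implicit (the degree argument for $f(\emptyset)=\emptyset$, preservation of propagation, and the analytic continuation from real $t$ to $z=i\beta$), all of which are accurate.
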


\begin{proof}
It is enough to notice that $\varphi\circ \mathrm{Ad}(u_f)$ is a $(\sigma_h,\beta)$-KMS state on $\cstu(T_n)$ given that the same holds for $\varphi$. Indeed, once this is done the result follows since this map will clearly be an affine  isometry with inverse $\varphi\to \varphi\circ \mathrm{Ad}(u_f^*)$.

 Notice that  
\[h(f(x))=h(x)\ \text{  for all }x\in X.\]
Indeed,  any isometry of the tree $T_n$  must satisfy $f(\emptyset)=\emptyset$. Therefore, for each $x\in T_n$, we have
\[h(f(x))=d(f(x),\emptyset)=d(f(x),f(\emptyset))=d(x,\emptyset)=h(x).\]
Using this, an immediate computation gives us that 
\[\langle u_f^*e^{it\bar h}ae^{-it\bar h}u_f\delta_x,\delta_y\rangle=\langle e^{it\bar h}u_f^*au_fe^{-it\bar h}\delta_x,\delta_y\rangle\]
for all $t\in \R$, all $a\in \cstu(T_n)$, and all $x,y\in T_n$. In other words,     the flow $\sigma_h$ is invariant under $\mathrm{Ad}(u_f)$. This shows that  $\varphi\circ \mathrm{Ad}(u_f)$ must be a $(\sigma_h,\beta)$-KMS state on $\cstu(T_n)$ given that  $\varphi$ is one (equivalently, this could also be shown with the help of Theorem \ref{ThmSufficientKMS}).
\end{proof}

\begin{theorem}\label{ThmManyManyKMSStates}
In the setting of Assumption \ref{AssumptionBranchingTree}: If $\beta=\log(n)$, then for each $\bar x\in [T_n]$ there are $2^{2^{\aleph_0}}$ extreme $(\sigma_h,\beta)$-KMS states $\varphi$ on $\cstu(T_n)$  such that 
\[\varphi(\chi_{\bar x|k^\smallfrown T_n})=1\ \text{ for all }\ k\in\N.\]
\end{theorem}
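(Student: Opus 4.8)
The plan is to exploit the homogeneity of the tree: I would first produce $2^{2^{\aleph_0}}$ extreme KMS states localized at a \emph{single} branch, and then transport them to an arbitrary branch using the isometry action recorded in Lemmas \ref{LemmaIsomTreeBranches} and \ref{LemmaMovingKMSStatesOnTree}. The starting point is Theorem \ref{ThmSemiG}, which already guarantees $2^{2^{\aleph_0}}$ extreme $(\sigma_h,\beta)$-KMS states on $\cstu(T_n)$; by Theorem \ref{ThmnBranchingTree}, all of them vanish on $\cK(\ell_2(T_n))$, so each may be pushed to the corona. The first task is to show that \emph{every} such extreme state is automatically localized at some branch. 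Using Lemma \ref{LemmaHigsonCoronaTn} I have the inclusions $C([T_n])\subseteq C(\nu T_n)\subseteq\roeq(T_n)$ with spectrum $\Omega(C([T_n]))=[T_n]$. Given an extreme KMS state $\varphi$, Proposition \ref{PropRoeqQStatesAffineIsoINTRO} yields an extreme $(\sigma_h^\infty,\beta)$-KMS state $\psi$ on $\roeq(T_n)$ with $\varphi=\psi\circ\pi$, and Theorem \ref{ThmExtRoeqGENERAL}(1) applied with $C=C([T_n])$ supplies a (unique, since $C([T_n])$ separates branches) point $\bar x\in[T_n]$ with $\psi(a)=a(\bar x)$ for all $a\in C([T_n])$. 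Unwinding the identification of $\pi(\chi_{\bar x|k^\smallfrown T_n})$ as the indicator $\chi_{\bar x|k^\smallfrown[T_n]}\in C([T_n])$, which takes value $1$ at $\bar x$, gives $\varphi(\chi_{\bar x|k^\smallfrown T_n})=1$ for all $k$. This defines a total map $b$ from the set of extreme states to $[T_n]$.

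The crux is a counting argument on the fibers of $b$. Since $|[T_n]|=2^{\aleph_0}$ while there are $2^{2^{\aleph_0}}$ extreme states, and since by K\"onig's theorem $\cf(2^{2^{\aleph_0}})>2^{\aleph_0}$, the cardinal $2^{2^{\aleph_0}}$ cannot be written as a sum of $2^{\aleph_0}$ cardinals each strictly below it. Hence the $2^{\aleph_0}$ fibers $b^{-1}(\bar x)$ cannot all have size $<2^{2^{\aleph_0}}$, so there exists at least one branch $\bar x_0\in[T_n]$ carrying $2^{2^{\aleph_0}}$ extreme $(\sigma_h,\beta)$-KMS states with $\varphi(\chi_{\bar x_0|k^\smallfrown T_n})=1$ for all $k$.

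Finally I would transport these to an arbitrary target branch $\bar x$. By Lemma \ref{LemmaIsomTreeBranches} pick an isometry $g\colon T_n\to T_n$ with $g(\bar x|k)=\bar x_0|k$ for all $k$. Since a tree isometry fixes $\emptyset$ and preserves geodesics from the root, it maps the cone above $\bar x|k$ onto the cone above $\bar x_0|k$, so $u_g\chi_{\bar x|k^\smallfrown T_n}u_g^*=\chi_{\bar x_0|k^\smallfrown T_n}$ and therefore $(\varphi\circ\Ad(u_g))(\chi_{\bar x|k^\smallfrown T_n})=\varphi(\chi_{\bar x_0|k^\smallfrown T_n})$. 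By Lemma \ref{LemmaMovingKMSStatesOnTree} the assignment $\varphi\mapsto\varphi\circ\Ad(u_g)$ is an affine isometric bijection of the $(\sigma_h,\beta)$-KMS state space onto itself, hence preserves extreme points and carries the $2^{2^{\aleph_0}}$ states localized at $\bar x_0$ injectively to extreme states localized at $\bar x$, as desired.

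I expect the main obstacle to be the counting step rather than the transport. One must be careful that $b$ is genuinely total and single-valued — this rests precisely on the facts that at $\beta=\log(n)$ all KMS states vanish on the compacts (so Proposition \ref{PropRoeqQStatesAffineIsoINTRO} applies to every extreme state) and that $C([T_n])=C([T_n])$ is the \emph{full} function algebra on $[T_n]$, so the branch $\bar x$ produced by Theorem \ref{ThmExtRoeqGENERAL}(1) (equivalently Proposition \ref{PropRestrictCenterVanishes}) is unique. Once that is in place, the cofinality bound converts ``$2^{2^{\aleph_0}}$ states in total'' into ``$2^{2^{\aleph_0}}$ at one branch,'' and the homogeneity lemmas finish the argument with only the routine verification that tree isometries preserve cones.
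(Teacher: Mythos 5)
Your proposal is correct and follows essentially the same route as the paper: extract $2^{2^{\aleph_0}}$ extreme states from Theorem \ref{ThmSemiG}, localize each at a branch via the corona and the Higson-compatible compactification $\mathcal T_n$ (the paper routes this through Theorem \ref{ThmUltimoGENERAL}, which is just Theorem \ref{ThmExtRoeqGENERAL} in disguise), pigeonhole onto a single branch, and transport by tree isometries using Lemmas \ref{LemmaIsomTreeBranches} and \ref{LemmaMovingKMSStatesOnTree}. Your appeal to K\"onig's theorem to justify that the $2^{\aleph_0}$ fibers cannot all have size $<2^{2^{\aleph_0}}$ is a welcome precise version of the paper's unelaborated ``pigeonhole argument.''
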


\begin{proof}
Fix $\beta=\log(n)$. By Theorem \ref{ThmSemiG}, there are $2^{2^{\aleph_0}}$ extreme $(\sigma_h,\beta)$-KMS states on $\cstu(T_n)$.  By Lemma \ref{LemmaHigsonCoronaTn}, $\mathcal T_n$ is a Higson compatible compactification of $T_n$. Therefore, Theorem 
 \ref{ThmUltimoGENERAL} implies that for any extreme $(\sigma_h,\beta)$-KMS state $\varphi$ on $\cstu(T_n)$, there is   $\bar x\in [T_n]$ such that 
\begin{equation}\label{EqSupportedBranch1}\varphi(\chi_{\bar x|k^\smallfrown T_n})=1\ \text{ for all }\ k\in\N.
\end{equation}
Therefore, since $|[T_n]|=2^{\aleph_0}$, a pigeonhole argument implies that there is at least one   $\bar x\in [T_n]$ for which   there are $2^{2^{\aleph_0}}$ extreme $(\sigma_h,\beta)$-KMS states  on $\cstu(T_n)$ satisfying  \eqref{EqSupportedBranch1} for $\bar x$.
Fix such $\bar x\in [T_n]$. 

Let now $\bar y\in [T_n]$ be arbitrary and let $f\colon T_n\to T_n$ be an isometry such that \[f(\bar x|k)=\bar y|k\ \text{ for all }\ k\in\N \]
(Lemma \ref{LemmaIsomTreeBranches}). Clearly, we must have that 
\[f\big(\bar x|k^\smallfrown T_n\big)=\bar y|k^\smallfrown T_n \ \text{ for all }\ k\in\N.\]
Hence, $\mathrm{Ad}(u_f)(\chi_{\bar y|k^\smallfrown T_n})=\chi_{\bar x|k^\smallfrown T_n}$ for all $k\in\N$ and, if $\varphi$ satisfies \eqref{EqSupportedBranch1} for $\bar x$, it follows that
\[\big(\varphi  \circ\mathrm{Ad}(u_f)\big)(\chi_{\bar y|k^\smallfrown T_n})=1\ \text{ for all }\ k\in\N\]
The result then follows from Lemma \ref{LemmaMovingKMSStatesOnTree}.
\end{proof}
 
\begin{proof}[Proof of Theorem \ref{ThmnBranchingTreeCOMPLETEINTRO}]
 Theorem \ref{ThmnBranchingTree} gives that there is a $(\sigma_h,\beta)$-KMS state on $\cstu(T_n)$ if and only if $\beta\geq \log(n)$. Moreover, item \eqref{ThmnBranchingTreeItem2} and the first claim of item \eqref{ThmnBranchingTreeItem3}   of  Theorem \ref{ThmnBranchingTreeCOMPLETEINTRO} also follow from Theorem \ref{ThmnBranchingTree}.

We are left to notice that the second and third claim of   Theorem \ref{ThmnBranchingTreeCOMPLETEINTRO}\eqref{ThmnBranchingTreeItem3} hold. From now on, let $\beta=\log(n)$. By Lemma \ref{LemmaHigsonCoronaTn}, $\mathcal T_n$ is a Higson compatible compactification of $T_n$. Therefore, Theorem 
 \ref{ThmUltimoGENERAL} implies that any extreme $(\sigma_h,\beta)$-KMS state $\varphi$ on $\cstu(T_n)$ must have the required form, i.e., there must be $\bar x\in [T_n]$ such that 
 \[\varphi(\chi_{\bar x|k^\smallfrown T_n})=1 \ \text{ for all }\ k\in\N.\] Finally, the fact that for each $\bar x\in [T_n]$, there are $2^{2^{\aleph_0}}$ extreme $(\sigma_h,\beta)$-KMS states on $\cstu(T_n)$ satisfying the above is now simply  Theorem  \ref{ThmManyManyKMSStates}.
\end{proof}

\subsection{Obtaining distinct KMS states on $\cstu(T_n)$ for $\beta=\log(n)$}\label{SectionChaotic}

We finish the paper presenting a more concrete way of obtaining distinct KMS states for inverse temperature $\beta=\log(n)$. Precisely, if $(\beta_n)_n$ is a sequence converging to $\log(n)$ from the right and   $(\varphi_{\beta_n})_n$ is a sequence of states such that each $\varphi_{\beta_n}$ is a $(\sigma_h,\beta_n)_n$-KMS state on $\cstu(T_n)$, then $w^*\text{-}\lim_{n,\cU}\varphi_{\beta_n}$  is a $(\sigma_h, \log(n))$-KMS state, where $\cU$ is an arbitrary  nonprincipal ultrafilter on $\N$. The next theorem shows that, picking different sequences $(\beta_n)$ as above, this procedure may give us distinct  $(\sigma_h, \log(n))$-KMS states. As mentioned at the end of Subsection \ref{SubsectionIntroAppli}, this kind of behavior is unusual (see \cite{vanEnterRusze2007JStatPhy}) and known as \emph{chaotic behavior} of \emph{chaotic convergence of KMS states}.

\begin{theorem}\label{ThmRodrigoBissacot}
In the setting of Assumption \ref{AssumptionBranchingTree}: different sequences $(\beta_n)_n$ converging to $\log(n)$ may converge to distinct $(\sigma_h,\log(n))$-KMS states on $\cstu(T_n)$.
\end{theorem}

\begin{proof}
 Let $\beta=\log(n)$. For each $\beta'>\log(n)$,  let $\varphi_{\beta'} $ be the  $(\sigma_{h},\beta')$-KMS state on $\cstu(T_n)$ given by 
 Theorem \ref{ThmnBranchingTree}\eqref{ThmnBranchingTreeItem2}, i.e.,  
\[\varphi_{\beta' }([a_{x,y}])=\sum_{y\in T_n} a_{y,y}\Big(e^{-\beta|y|}-ne^{-\beta (|y|+1)}\Big)\]
for all $[a_{x,y}]\in \cstu(T_n)$. Given any nonprincipal ultrafilter $\cU$ on $\N$ and any sequence $(\beta_n)_n$ converging to $\beta$ from the right, we know that $w^*\text{-}\lim_{n,\cU}\varphi_{\beta_n}$ is a $(\sigma_{h},\beta)$-KMS state on $\cstu(T_n)$. Our strategy will be to  construct different  sequences $(\beta_n)_n$ as above which  give us different $(\sigma_{h},\beta)$-KMS states on $\cstu(T_n)$. For that, some manipulations with the formula of $\varphi_{\beta'}$ will be useful. Firstly, given  $E\subseteq T_n$ and  $k\in\N$, write
\[E_k=\{y\in E\mid |y|=k\}\] and notice that $|E_k|\leq  n^k$. Then, given an arbitrary $\beta'>\beta$, we have  
\begin{align*}\varphi_{\beta'}(\chi_{E}) & =\sum_{y\in E}(e^{-{\beta'}´|y|}-ne^{-{\beta'}´ (|y|+1)})\\
& = \sum_{k=0}^\infty |E_k|(e^{-{\beta'}´ k}-ne^{-{\beta'}´(k+1)})\\
&=(1-ne^{-{\beta'}})\sum_{k=0}^\infty |E_k|e^{-{\beta'} k}.
\end{align*}
Applying the change of variables $\tau=ne^{-{\beta'}}$ and letting $a_k=|E_k|/n^k$ for each $k\geq 0$, we have that each $a_k$ is in $[0,1]$ and 
\[\varphi_{\beta'}(\chi_{E})=(1-\tau)\sum_{k=0}^\infty a_k\tau^k.\]
Moreover, $\beta'\to \log(n)$ from the right if and only if $\tau\to 1$ from the left. At last, notice that if $E$ is such that there are $p<q\in \N$ with  
\[a_k=\left\{\begin{array}{ll}
1,& k\in  [p,q]\cap \N,\\
0,& k\not\in [p,q]\cap \N,
\end{array}\right.\]
then \begin{equation}\label{Eq.chiEtau}
    \varphi_{\beta'}(\chi_E)=\tau^p-\tau^{q+1}.
\end{equation}
This finishes the manipulations in the formula of $\varphi_{\beta'}$ that we will need.

We now construct increasing sequences $(\tau_k)_k$ and $(\theta_k)_k$ converging to $1$, and sequences $(p_k)_k$ and $(q_k)_k$ of natural numbers by induction for which the following holds 
\begin{itemize}
\item $p_k<q_k<p_{k+1}-1$ for all $k\in\N$,
\item $\tau_k^{p_k}-\tau_k^{q_k+1}>1/2$ for all $k\in\N$, and
\item  $\theta_k^{p_m}-\theta_k^{q_m+1}< 2^{-m-2}$ for all $k,m\in\N$.
\end{itemize}
This can be easily done as follows:  let $k\geq 2$ and suppose $(\tau_m)_{m=1}^{k-1}$, $(\theta_m)_{m=1}^{k-1}$, $(p_m)_{m=1}^{k-1}$, and $(q_m)_{m=1}^{k-1}$ where chosen appropriately; step $1$ of the induction can clearly be done.  Step $k$ of the induction goes as follows. Pick  $p_k>q_{k-1}+1$ such that  $\theta_m^{p_k}<2^{-k-2}$ for all $m\leq k-1$. Then pick $\tau_k\in (\tau_{k-1},1)$ such that $\tau_k^{p_k}>3/4$ and $q_k>p_k$ with $\tau_k^{q_k+1}<1/4$. Chose now $\theta_k\in (\theta_{k-1},1)$ with $\theta_k^{p_m}-\theta_k^{q_m+1}<2^{-m-2}$ for all $m\leq k$. This finishes the induction. 

We now use the sequences constructed in the previous paragraph to finish the proof. Precisely, we show that if $\cU$ is a nonprincipal ultrafilter on $\N$, then \[w^*\text{-}\lim_{n,\cU}\varphi_{\tau_n}\neq w^*\text{-}\lim_{n,\cU}\varphi_{\theta_n}.\]
For this, let $E\subseteq T_n$ be   given by 
\[E=\Big\{x\in T_n\mid |x|\in \bigcup_{m=1}^\infty [p_m,q_m]\Big\}.\]
Then   $|E_k|=n^k$ if $k\in \bigcup_{m=1}^\infty [p_m,q_m]$ and $|E_k|=0$ otherwise. Hence, letting $a_k=|E_k|/n^k$ as above, we have that 
\[a_k=\left\{\begin{array}{ll}
1,& k\in \bigcup_{m=1}^\infty [p_m,q_m] \\
 0,    & \text{otherwise.}
\end{array}\right.\]
 Therefore, using  \eqref{Eq.chiEtau} above, we have  
\[\varphi_{\tau_k}(\chi_E)=\sum_{m=1}^\infty (\tau_k^{p_m}-\tau_k^{q_m+1})\geq \tau_k^{p_k}-\tau_k^{q_k+1}>\frac{1}{2}\]
for all $k\in\N$. On the other hand, 
\[\varphi_{\theta_k}(\chi_E)=\sum_{m=1}^\infty (\theta_k^{p_m}-\theta_k^{q_m+1})<\sum_{m=1}^\infty 2^{-m-2}=1/4\]
for all $k\in\N$. Therefore, we conclude that 
\[\Big(w^*\text{-}\lim_{n,\cU}\varphi_{\tau_n}\Big)(\chi_E)\geq  1/2>1/4\geq \Big(w^*\text{-}\lim_{n,\cU}\varphi_{\theta_n}\Big)(\chi_E).\]
This finishes the proof.
\end{proof}

\begin{acknowledgments}
B. M. Braga would like to thank Alcides Buss and the Universidade Federal of Santa Catarina (UFSC) for an invitation which led him to meet Ruy Exel and, as a consequence,   made this project possible. R. Exel would like to express his thanks to IMPA for funding a two week  visit to Rio de Janeiro during which a large part of this project was developed. The authors are also thankful to Rodrigo Bissacot for bringing up to their attention the fact that chaotic convergence of KMS states is something yet far from being well understood. 
\end{acknowledgments}

\newcommand{\etalchar}[1]{$^{#1}$}
\providecommand{\bysame}{\leavevmode\hbox to3em{\hrulefill}\thinspace}
\providecommand{\MR}{\relax\ifhmode\unskip\space\fi MR }
\providecommand{\MRhref}[2]{%
  \href{http://www.ams.org/mathscinet-getitem?mr=#1}{#2}
}
\providecommand{\href}[2]{#2}


\begin{thebibliography}{BMF{\etalchar{+}}22}

\bibitem[BBF{\etalchar{+}}22]{BaudierBragaFarahVignatiWillett2023}
F.~{Baudier}, B.~M. { Braga}, I.~{Farah}, A.~{Vignati}, and R.~{Willett},
  \emph{{Embeddings of von Neumann algebras into uniform Roe algebras and
  quasi-local algebras}}, arXiv e-prints (2022), arXiv:2212.14312.

\bibitem[BFV21]{BragaFarahVignati2018AdvMath}
B.~M. Braga, I.~Farah, and A.~Vignati, \emph{Uniform {R}oe coronas}, Adv. Math.
  \textbf{389} (2021), Paper No. 107886, 35.

\bibitem[BGT18]{BissacotGaribaldiThieullen2018ErgDymSym}
Rodrigo Bissacot, Eduardo Garibaldi, and Philippe Thieullen,
  \emph{Zero-temperature phase diagram for double-well type potentials in the
  summable variation class}, Ergodic Theory Dynam. Systems \textbf{38} (2018),
  no.~3, 863--885. \MR{3784246}

\bibitem[BMF{\etalchar{+}}22]{BaudierBragaFarahKhukhroVignatiWillett2021uRaRig}
F.~Baudier, B.~M., I.~Farah, A.~Khukhro, A.~{Vignati}, and R.~Willett,
  \emph{Uniform {R}oe algebras of uniformly locally finite metric spaces are
  rigid}, Invent. Math. \textbf{230} (2022), no.~3, 1071--1100.

\bibitem[BO08]{BrownOzawa}
N.~P. Brown and N.~Ozawa, \emph{{\cstar}-algebras and finite-dimensional
  approximations}, Graduate Studies in Mathematics, vol.~88, American
  Mathematical Society, Providence, RI, 2008.

\bibitem[Bou22]{Bourne2022JPhys}
C.~Bourne, \emph{Locally equivalent quasifree states and index theory}, J.
  Phys. A \textbf{55} (2022), no.~10, Paper No. 104004, 38.

\bibitem[BP20]{BanakhProtaso2020TOpAppli}
T.~Banakh and I.~Protasov, \emph{Constructing a coarse space with a given
  {H}igson or binary corona}, Topology Appl. \textbf{284} (2020), 107366, 20.
  \MR{4142223}

\bibitem[BR97]{BratteliRobinsonBookII1997}
O.~Bratteli and D.~Robinson, \emph{Operator algebras and quantum statistical
  mechanics. 2}, second ed., Texts and Monographs in Physics, Springer-Verlag,
  Berlin, 1997, Equilibrium states. Models in quantum statistical mechanics.
  \MR{1441540}

\bibitem[CH10]{ChazottesHochman2010CommMathPhy}
Jean-Ren\'{e} Chazottes and Michael Hochman, \emph{On the zero-temperature
  limit of {G}ibbs states}, Comm. Math. Phys. \textbf{297} (2010), no.~1,
  265--281. \MR{2645753}

\bibitem[Cho69]{Chou1969PAMS}
C.~Chou, \emph{On the size of the set of left invariant means on a semi-group},
  Proc. Amer. Math. Soc. \textbf{23} (1969), 199--205. \MR{247444}

\bibitem[CRL15]{CoronelRivera-Letelier2015JStaPhy}
Daniel Coronel and Juan Rivera-Letelier, \emph{Sensitive dependence of {G}ibbs
  measures at low temperatures}, J. Stat. Phys. \textbf{160} (2015), no.~6,
  1658--1683. \MR{3382762}

\bibitem[EM19]{EwertMeyer2019}
E.~Ewert and R.~Meyer, \emph{Coarse {G}eometry and {T}opological {P}hases},
  Comm. Math. Phys. \textbf{366} (2019), no.~3, 1069--1098.

\bibitem[Jon21]{Jones2021CommMathPhys}
C.~Jones, \emph{Remarks on anomalous symmetries of {C}*-algebras}, Comm. Math.
  Phys. \textbf{388} (2021), no.~1, 385--417.

\bibitem[Kee94]{Keesling1994TopProc}
J.~Keesling, \emph{The one-dimensional \v{C}ech cohomology of the {H}igson
  compactification and its corona}, Topology Proc. \textbf{19} (1994),
  129--148. \MR{1369755}

\bibitem[Kub17]{Kubota2017}
Y.~Kubota, \emph{Controlled topological phases and bulk-edge correspondence},
  Comm. Math. Phys. \textbf{349} (2017), no.~2, 493--525.

\bibitem[LT21]{LudewigThiang2021CommMathPhys}
M.~Ludewig and G.~Thiang, \emph{Gaplessness of {L}andau {H}amiltonians on
  hyperbolic half-planes via coarse geometry}, Comm. Math. Phys. \textbf{386}
  (2021), no.~1, 87--106.

\bibitem[Roe88]{Roe1988}
J.~Roe, \emph{An index theorem on open manifolds. {I}, {II}}, J. Differential
  Geom. \textbf{27} (1988), no.~1, 87--113, 115--136.

\bibitem[Roe93]{Roe1993}
\bysame, \emph{Coarse cohomology and index theory on complete {R}iemannian
  manifolds}, Mem. Amer. Math. Soc. \textbf{104} (1993), no.~497, x+90.

\bibitem[Roe03]{RoeBook}
\bysame, \emph{Lectures on coarse geometry}, University Lecture Series,
  vol.~31, American Mathematical Society, Providence, RI, 2003.

\bibitem[{\v{S}}W13]{SpakulaWillett2013}
J.~{\v{S}}pakula and R.~Willett, \emph{On rigidity of {R}oe algebras}, Adv.
  Math. \textbf{249} (2013), 289--310.

\bibitem[{\v{S}}W17]{SpakulaWillett2017}
\bysame, \emph{A metric approach to limit operators}, Trans. Amer. Math. Soc.
  \textbf{369} (2017), no.~1, 263--308. \MR{3557774}

\bibitem[{\v{S}}Z20]{SpakulaZhang2020JFA}
J.~{\v{S}}pakula and J.~Zhang, \emph{Quasi-locality and property {A}}, J.
  Funct. Anal. \textbf{278} (2020), no.~1, 108299, 25.

\bibitem[vER07]{vanEnterRusze2007JStatPhy}
A.~van Enter and W.~Ruszel, \emph{Chaotic temperature dependence at zero
  temperature}, J. Stat. Phys. \textbf{127} (2007), no.~3, 567--573.
  \MR{2316198}

\bibitem[Yu00]{Yu2000}
G.~Yu, \emph{The coarse {B}aum-{C}onnes conjecture for spaces which admit a
  uniform embedding into {H}ilbert space}, Invent. Math. \textbf{139} (2000),
  no.~1, 201--240.

\end{thebibliography}
\end{document}